\newtheorem{thm}{Theorem}[section]
\newtheorem{prop}[thm]{Proposition}
\newtheorem{lem}[thm]{Lemma}
\newtheorem{cor}[thm]{Corollary}
\theoremstyle{definition}
\newtheorem{Def}[thm]{Definition}
\theoremstyle{remark}
\newtheorem{rem}[thm]{Remark}
\newcommand{\fig}[2]{\includegraphics[width=#1\textwidth]{#2}}
\newcommand{\Z}{\mathbb{Z}}
\title{%
  Enumeration of planar bipartite tight irreducible maps}
\author{J\'er\'emie Bouttier\thanks{Sorbonne Université and Université Paris Cité, CNRS, IMJ-PRG, F-75005 Paris, France} \and
  Emmanuel Guitter\thanks{Université Paris-Saclay, CNRS, CEA, Institut de physique
    théorique, 91191, Gif-sur-Yvette, France} \and 
  Hugo Manet\thanks{Université Paris Cité, CNRS, IRIF, F-75013, Paris, France}}
\date{\today}
\begin{document}
\maketitle

\begin{abstract}
  We consider planar bipartite maps which are both \emph{tight}, i.e.\ without vertices of degree $1$,
  and \emph{$2b$-irreducible}, i.e.\ such that each cycle has length at least $2b$ and such that any cycle of length exactly $2b$ is the contour of a face.
  It was shown by Budd that the number $\mathcal N_n^{(b)}$ of such maps made out of a fixed set of $n$ faces with prescribed even degrees is a polynomial in both $b$ and the face degrees. In this paper, we give an explicit expression for $\mathcal N_n^{(b)}$ by a direct bijective approach based on the so-called slice decomposition. More precisely, we decompose any of the maps at hand into a collection of $2b$-irreducible tight slices and a suitable two-face map.
  We show how to bijectively encode each $2b$-irreducible slice via a \emph{$b$-decorated tree} drawn on its derived map,
  and how to enumerate collections thereof.
  We then discuss the polynomial counting of two-face maps, and show how to combine it with the former enumeration to obtain $\mathcal N_n^{(b)}$.
\end{abstract}

\tableofcontents

\section{Introduction}
\label{sec:intro}

The study of random maps has been a subject of constant interest over the last sixty years, ever since Tutte’s first papers on the subject.
Of particular interest is the number of maps of fixed genus $g$ and with $n$ labeled faces of prescribed degrees:
explicit formulas were given by Tutte as early as 1961 in \cite{Tutte62} in the case of planar (i.e.\ genus $0$) maps with at most two faces of odd degree.
This formula was extended very recently to the case of planar maps with an arbitrary (necessarily even) number of odd-degree faces in \cite{polytightmaps}.

It was also recently realized that enumeration formulas remain simple in the case of \emph{tight} maps, i.e.\ maps without vertices of degree $1$. 
It was shown by Norbury in \cite{Norbury10, Norbury13} that the number of tight maps of fixed genus $g$ with $n$ labeled faces of respective degree $b_1, \ldots, b_n$ 
is in fact a \emph{quasi-polynomial} of degree $2n+6g-6$ in the $b_i$’s depending on their parity (provided that $n\geq 3$ if $g=0$). 
An explicit expression for this quasi-polynomial was given in \cite{polytightmaps} in the case $g=0$.

A remarkable extension of Norbury’s result was obtained by Budd in
\cite{Budd2022} for the case of \emph{essentially $2b$-irreducible}
maps with even degrees $b_i = 2m_i$.  By essentially $2b$-irreducible,
we mean maps that have no contractible cycle of length less than $2b$
and any contractible cycle of length $2b$ is the contour of a face of
degree $2b$. It was shown that the number of such maps is now a
polynomial of degree $2n+6g-6$ in both $b$ and the $m_i$’s. One of
Budd's motivations was to consider the limit where $b$ and the $m_i$'s
are taken to be large, which corresponds to considering so-called
irreducible \emph{metric maps}, having an unexpected connection with
Weil-Petersson volumes of hyperbolic surfaces~\cite{Budd2022a}.

The systematic study of planar irreducible maps, or more generally maps with a prescribed girth (which is the shortest length of a cycle in the map),
was initiated by Bernardi and Fusy in \cite{BF12a, BF12b}
via the existence of a canonical bi-orientation of such maps. In a later work, it was shown in \cite{irredmaps, irredsuite} how to recover 
their results by a substitution approach or, alternatively, via the decomposition of irreducible maps into \emph{slices} upon cutting 
these maps along properly chosen geodesic paths. Let us also mention the paper~\cite{Albenque2015} which develops an approach based on blossoming trees.

In \cite{Budd2022}, Budd relies precisely on the substitution approach of \cite{irredmaps} which, in the case of even-degree faces,
he generalizes to maps having arbitrary genus and adapts to deal with tight maps. 
The purpose of the present paper is, in the case $g=0$, to recover and sharpen the results of \cite{Budd2022}
by using instead the slice decomposition approach.
This allows us to write a slightly more explicit expression for the polynomials counting $2b$-irreducible maps with prescribed even degrees,
and to give a combinatorial interpretation of the various terms in that expression.
Our main result consists in the following theorem:
\begin{thm}
  \label{thm:main-result} Let $n, b, m_1,\ldots,m_n$ be positive integers and let us denote by $\mathcal{N}_{n}^{(b)}(2m_1,\allowbreak \ldots,2m_n)$ the number
of planar bipartite tight $2b$-irreducible maps with $n$ labeled faces
of respective degrees $2m_1,\ldots,2m_n$. Then, for $n \geq 3$, $m_1 \geq b+1$ and $m_2,\ldots,m_n\geq b$,
 we have
  \begin{equation}
    \label{eq:main-result}
\mathcal{N}_{n}^{(b)}(2m_1,\ldots,2m_n)=    (n-3)! \sum_{k_1,\ldots,k_n \geq 0} p^{(b)}_{k_1}(m_1) q^{(b)}_{k_2}(m_2) \cdots q^{(b)}_{k_n}(m_n) \alpha^{(b)}_{k_1+\cdots+k_n,n-3}
  \end{equation}
  where $p^{(b)}_{k}(m)$ and $q^{(b)}_{k}(m)$ denote the polynomials in $b$ and $m$:
  \begin{equation}
    \label{eq:pkb}
    p^{(b)}_{k}(m) := \binom{m-b-1}{k} \binom{m+b+k}{k} = \frac1{(k!)^2} \prod_{i=1}^{k} \left( m^2 - (b+i)^2 \right)
  \end{equation}
  \begin{equation}
    \label{eq:qkb}
      q^{(b)}_{k}(m) := \binom{m+b}{k} \binom{m-b-1+k}{k} = \frac1{(k!)^2} \prod_{i=0}^{k-1} \left( m^2 - (b-i)^2 \right)
  \end{equation}
  and $\alpha^{(b)}_{k,n}$ the polynomial in $b$ given by the expression:
  \begin{equation}
    \alpha_{k,n}^{(b)} = [u^{n-k}] \frac{1}{\left( 1 - u \sum\limits_{j=2}^b \frac1 b \binom{b}j \binom{b}{j-1} (-u)^{j-2} \right)^{n+1}}.
    \label{eq:alpha-k-n}
  \end{equation}
  The quantity  $\mathcal{N}_{n}^{(b)}(2m_1,\ldots,2m_n)$ is a polynomial in $b$ and $m_1,\ldots,m_n$, of total degree $2n-6$. It is symmetric in the variables $m_1,\ldots,m_n$, and even in each of them.
\end{thm}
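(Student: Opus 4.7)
The plan is to follow the slice-decomposition roadmap laid out in the introduction. I first select the face of degree $2m_1$ as the root face with a distinguished root corner, and cut the map along a canonical family of geodesic paths from that corner. This splits the map into $n$ pieces — a $2b$-irreducible tight slice attached to each face — together with a residual two-face map encoding the gluing data. The asymmetric role played by the root face versus the non-root faces is what produces the two distinct polynomials $p^{(b)}$ and $q^{(b)}$ in (\ref{eq:main-result}), and accounts for the differing bounds $m_1 \geq b+1$ and $m_i \geq b$ on the face degrees.

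I then build a bijection between $2b$-irreducible tight slices of apex degree $2m$ and $b$-decorated trees drawn on the derived map of the slice. The generating function of such decorated trees, weighted by the number $k$ of decorations, should factor exactly as $p^{(b)}_k(m)$ in the root case and $q^{(b)}_k(m)$ in the non-root case. The factored expression $\frac{1}{(k!)^2}\prod_{i}(m^2-(b\pm i)^2)$ in (\ref{eq:pkb}) and (\ref{eq:qkb}) suggests that a decoration should be viewed as a pair of strictly increasing sequences of bounded integers drawn from angular positions around the apex face and radial positions along the slice, with the $2b$-irreducibility condition translating precisely into the vanishing of the product when $m$ is too small relative to $b$. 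The shift by one unit between the root and non-root versions should come from the additional freedom (or extra constraint) created by the presence of the root corner.

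Next I enumerate residual two-face maps carrying $n$ slice attachment points and $K=k_1+\cdots+k_n$ decorations. The generating function (\ref{eq:alpha-k-n}) is of quasi-Lagrange type: the inner polynomial $\sum_{j=2}^{b}\frac{1}{b}\binom{b}{j}\binom{b}{j-1}(-u)^{j-2}$ is a Narayana-type kernel enumerating admissible chord patterns along the common boundary of the two faces that are compatible with $2b$-irreducibility, while the $(n+1)$-th power of the denominator accounts for the $n$ slice insertions along the spine. The $(n-3)!$ prefactor in (\ref{eq:main-result}) comes from labeling the non-root slices once their cyclic order on the two-face skeleton is fixed.

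Multiplying the three contributions and summing over $k_1,\ldots,k_n$ produces (\ref{eq:main-result}). The stated polynomiality in $b$ and the $m_i$'s, the total degree $2n-6$ (since each $p^{(b)}_{k_i}, q^{(b)}_{k_i}$ contributes joint degree $2k_i$ while $\alpha^{(b)}_{K,n-3}$ contributes degree at most $2(n-3-K)$), the evenness in each $m_i$ (quadratic factorization) and the symmetry in $m_2,\ldots,m_n$ (symmetric appearance of the $q$-factors) all follow from the explicit formulas. The main obstacle will be the precise definition of the $b$-decorated trees on the derived map and the proof that their joint enumeration over a slice collection matches the $p^{(b)}_k, q^{(b)}_k$ on the nose rather than merely up to a binomial identity; a secondary subtlety is that the full symmetry among all the $m_i$'s is restored only after summation, being manifestly broken by the single $p$-factor in each individual term.
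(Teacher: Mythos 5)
Your high-level roadmap (slice decomposition, decorated trees on the derived map, degree and parity counting from the explicit product formulas) matches the paper's, but the combinatorial meaning you assign to each piece of formula~\eqref{eq:main-result} is scrambled in a way that would not survive a careful write-up, and the scrambling is not cosmetic.

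First, the paper's decomposition (Proposition~\ref{prop:annulbij}) produces a single two-face map $\mathbf{m}_{12}$ built from faces $1$ \emph{and} $2$, together with a sequence of $k+1 \le n-2$ slices collectively containing faces $3,\ldots,n$, where a given slice may hold several faces; you instead describe ``$n$ pieces --- a slice attached to each face,'' which is a different and incorrect structure. Second, and more seriously, you attribute $\alpha_{k,n}^{(b)}$ to the enumeration of residual two-face maps, reading the kernel $\sum_{j\ge 2}\frac1b\binom{b}{j}\binom{b}{j-1}(-u)^{j-2}$ as enumerating chord patterns on the common boundary of the two faces. In the paper $\alpha_{k,n}^{(b)}$ has nothing to do with the two-face map: by Definition~\ref{def:alphakn} it counts tuples of \emph{simplified $b$-arrow trees}, which are the connected pieces of the decorated tree living on the primal part of a slice, and the kernel is $h_b(u)$ arising from a Lagrange inversion of their generating function $U_0(z)$ (Proposition~\ref{prop:arrowtreeformula}). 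The two-face map $\mathbf{m}_{12}$ contributes the $p^{(b)}_{k_1}(m_1)\,q^{(b)}_{k_2}(m_2)$ factor, via a cut-and-zip bijection with pairs of marked forests (Proposition~\ref{prop:pcmm}), not the $\alpha$ factor; while the remaining $q^{(b)}_{k_i}(m_i)$ for $i\ge3$ count admissible cyclic words of leaflets/twigs/attaching points around a blossoming dual vertex of degree $2m_i$ inside a slice (proof of Proposition~\ref{prop:Fbkm}). There is no notion of ``slice of apex degree $2m$'' and the polynomials $p^{(b)}_k$, $q^{(b)}_k$ do not count slices or decorated trees themselves. In short, you swap the roles of the two-face map and the arrow-tree count, and you conflate the slice-level objects with the vertex-level decorations; an argument actually built on this reading would not produce the convolution structure of~\eqref{eq:main-result}, because the $(n-3)!$-weighted assembly of decorated trees via their attaching points (the Lemma following Proposition~\ref{prop:Fbkm}) and the length-$\ge 2(c+1)$ constraint on the unique cycle of $\mathbf m_{12}$ (Proposition~\ref{prop:tightdecomp}) are both tied to the correct assignment.

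You are right that each $p^{(b)}_{k_i}$ or $q^{(b)}_{k_i}$ has joint degree $2k_i$, that $\alpha^{(b)}_{K,n-3}$ has degree $2(n-3-K)$, and that the $m_1\leftrightarrow m_2$ symmetry is a genuine extra identity (it is Remark~\ref{rem:sympkc}); those observations do carry over, but they cannot rescue the structural misassignment above.
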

\noindent Some remarks are in order. First, note that
$\alpha_{k,n}^{(b)}=0$ for $k>n$, hence the sum
in~\eqref{eq:main-result} is a \emph{finite sum}.  Second, the fact
that $\alpha_{k,n}^{(b)}$ is a polynomial in $b$ can be seen directly
from \eqref{eq:alpha-k-n} by noting that the fraction in the
right-hand side is a series in $u$ whose coefficients are polynomial
in $b$. We refer to Section~\ref{sec:arrowenum} for a more detailed
discussion, and in particular to Proposition~\ref{prop:alphapolexp}
below for a manifestly polynomial expression of $\alpha_{k,n}^{(b)}$.
Third, it follows from \cite{polytightmaps} that
Theorem~\ref{thm:main-result} also holds for $b=0$ upon understanding
Equation~\eqref{eq:alpha-k-n} as $\alpha_{k,n}^{(0)}=\delta_{k,n}$ and
noting that every map is $0$-irreducible.  Finally,
Equation~\eqref{eq:main-result} does not quite hold if we take all
$m_i$ equal to $b$: for $n\geq 4$, one needs to add an extra
pathological term $\frac{(n-1)!}{2}(-1)^n$, see \cite[Theorem
1]{Budd2022} and Appendix~\ref{sec:Npatho}.

\paragraph{Outline.}
Let us now discuss the path to Theorem~\ref{thm:main-result}.
It is a consequence of a chain of bijective decompositions described in Section~\ref{sec:slicedec}.
The first idea, borrowed from \cite{polytightmaps}, consists in decomposing a (at this stage, not necessarily irreducible nor tight) planar bipartite map into a two-face map with face degrees $2m_1, 2m_2$ and a collection of slices,
which are so to say pieces of maps lying in-between two geodesics, built out of the faces with degrees $2m_3, \ldots, 2m_n$.
This decomposition is recalled in Section~\ref{sec:slicedecgen}, where we also show how to extend it to the case of tight irreducible maps by imposing simple conditions on the two-face map and independent irreducibility and tightness conditions on the slices.
Then, following \cite{irredmaps}, we explain in Section~\ref{sec:quasislices} how a tight \emph{irreducible slice} can be decomposed recursively.
We show in Section~\ref{sec:treeformulation} that this decomposition has a bijective representation involving a \emph{decorated tree} which spans the \emph{derived map} of the slice.
A decorated tree is then naturally decomposed into its components living on the \emph{primal map}, which we call \emph{arrow trees},
and \emph{blossoming vertices} which are isolated dual vertices with the same (prescribed) degrees as their corresponding primal faces, among $2m_3, \ldots, 2m_n$.
Arrow trees and blossoming vertices are studied in Section~\ref{sec:treecomponents}.
As seen in Section~\ref{sec:tightchar}, the tightness property can be pulled back from the derived map onto the decorated tree, specifically as a property of its blossoming vertices.

Section~\ref{sec:enum} is devoted to the enumerative consequences of the above chain of decompositions.
In practice, the initial problem of enumerating planar bipartite tight $2b$-irreducible maps with prescribed face degrees boils down to two separate counting problems.
On the one hand, the problem of counting collections of tight irreducible slices; on the other hand, that of counting two-face maps.
The first problem reduces to counting collections of decorated trees.
This is performed in two steps. We first count arrow trees in Section~\ref{sec:arrowenum} in two different ways,
eventually yielding Equation~\eqref{eq:alpha-k-n} for their contribution $\alpha_{k,n}^{(b)}$ to formula~\eqref{eq:main-result}, as well as a recursive way to compute it.
We then evaluate in Section~\ref{sec:blossenum} the number of ways to connect the arrow trees via blossoming vertices into the desired collection of decorated trees.
As it turns out, each individual configuration of a blossoming vertex is counted polynomially in $b$ and in the $m_i$ corresponding to its degree
through Equation~\eqref{eq:qkb}, and their connection with arrow trees amounts to a convolution which preserves this polynomiality.
The second problem, i.e.\ counting two-face maps, is addressed in Section~\ref{sec:unicycenum}.
The irreducibility constraint imposes that these two-face maps have a long enough cycle, which also yields a polynomial in $b, m_1, m_2$ for their enumeration.
Section~\ref{sec:lutfin} explains how to combine everything, namely how to attach the slices to the two-face maps.
This last step leads to the formula~\eqref{eq:main-result}, which is actually shown to be a totally symmetric polynomial in the $m_i$'s, which also depends polynomially on $b$.
We also explore there a number of particular instances of Theorem~\ref{thm:main-result}.

Section~\ref{sec:conc} gathers some concluding remarks, while extra
material may be found in the appendices.
Appendix~\ref{app:closing-tree} discusses how to reconstruct a slice
from its associated decorated tree, by a closing procedure.
Appendix~\ref{app:compatibility-Budd} checks the compatibility of
formula~\eqref{eq:main-result} with the expression obtained in
\cite{Budd2022}. Appendix~\ref{sec:Npatho} discusses the enumeration
of $2b$-irreducible $2b$-angulations, which falls just outside the
range of validity of Theorem~\ref{thm:main-result}.

\paragraph{Basic definitions.}
Let us start by introducing some terminology related to maps, we refer to \cite{Schaeffer15} for more details.
A \emph{planar map} (hereafter called a map for short) is a connected (multi)graph drawn on the sphere without edge crossings. Loops and multiple edges are allowed.
A map consists of vertices, edges and faces. It is customary to draw a planar map on the plane, this amounts to choosing one face as the \emph{outer face}. 
A \emph{corner} is the angular sector delimited by two consecutive edges incident to a same vertex, hence also incident to
a same face. The degree of a vertex or a face is its number of incident corners.
In this paper, we consider \emph{bipartite} maps: the vertices can be partitioned in two sets in such a way that every edge connects vertices from different sets. Equivalently,
for planar maps, this amounts to requiring that every face be of even degree.

A path on a map is a path on the sphere that consists of edges and vertices of the map. The \emph{length} of a path is its number of edges, counted with 
multiplicity. The path is said simple if it does not visit a vertex more than once (except at its endpoints for a simple closed path). A simple closed path of non-zero length is called a \emph{cycle}. 
The \emph{girth} of a map is the minimal length of a cycle on the map. For $d$ a non-negative integer, a map is said \emph{$d$-irreducible} if it has girth at least $d$,
and every cycle of length $d$ is the contour of a face (by contour of a face, we mean the closed path formed by its incident edges). Note that every map is $0$-irreducible.
As we consider bipartite maps, whose all cycles necessarily have even length, we will take $d$ an even integer and write $d=2b$.

Following \cite{polytightmaps}, we define a \emph{tight map} as a map with some of its vertices marked, which is such than any leaf (vertex of degree $1$) is marked. In particular,
a tight map having no marked vertex is a map without leaves.  

Given a map and two of its vertices $u,v$, the (graph) \emph{distance} between $u$ and $v$ is the minimal length of a path connecting them. Such a path
of minimal length is called a \emph{geodesic}.

\paragraph{Acknowledgements.} We thank Timothy Budd, Guillaume Chapuy,
Éric Fusy and Grégory Miermont for fruitful discussions related to
this work. We acknowledge financial support from the Agence Nationale
de la Recherche via the grants ANR-18-CE40-0033 ``Dimers'',
ANR-19-CE48-0011 ``Combiné'' and ANR-23-CE48-0018 ``CartesEtPlus''.

\section{Slice decomposition of (tight) irreducible maps}
\label{sec:slicedec}
\subsection{Slice decomposition of maps}
\label{sec:slicedecgen}
An \emph{elementary slice} is a planar map with one marked face, chosen as the \emph{outer face}, having one marked
incident vertex called the \emph{apex} and one marked incident edge called the \emph{base}, which satisfy the following constraints:
denoting by $A$ the apex and by $B$, $C$ the endpoints of the base (with the outer face appearing on the right when going from $B$ to $C$),
\begin{itemize}
\item the \emph{blue boundary}, defined as the portion $AB$ of the contour of the outer face when going from $A$ to $B$ with the outer face on the right is a geodesic,
\item  the \emph{red boundary}, defined as the portion $CA$ of the contour of the outer face when going from $C$ to $A$ with the outer face on the right is the unique geodesic between $C$ and $A$,
\item the apex is the only vertex common to the blue and red boundaries. 
\end{itemize}
\begin{figure}
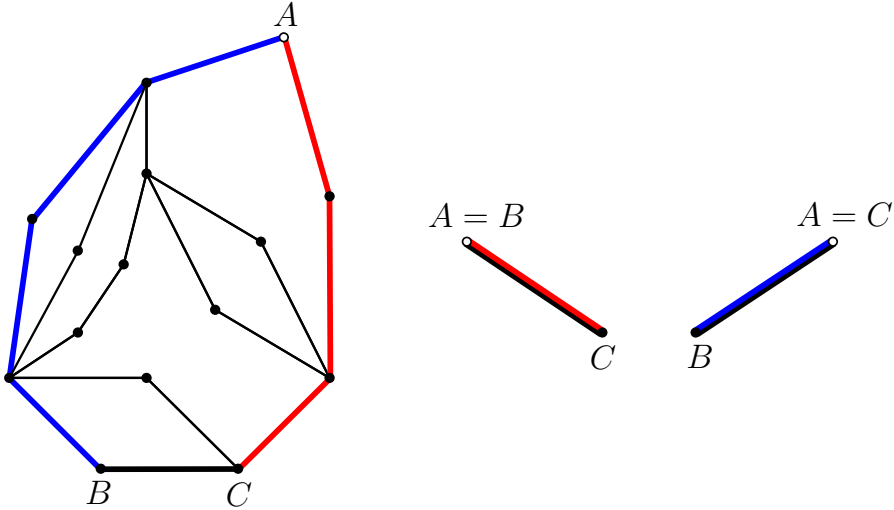

  \centering
  \fig{.8}{slice-types}
  \caption{A generic slice (left) and the two possible slices without
    inner faces: the trivial slice (middle) and the empty slice
    (right).}
  \label{fig:slice-types}
\end{figure}
See Figure~\ref{fig:slice-types} for examples.
Note that, by the triangle inequality and the bipartiteness assumption, the length of $AB$ minus the length of $CA$ is equal to $\pm 1$. 
When this difference is equal to $-1$, then it follows from the above constraints that the whole slice is necessarily equal to the \emph{trivial slice},  
reduced to a single edge and two vertices, $A=B$ and $C$. 
In the following, we will call \emph{slice} for short an elementary slice which is not trivial.
Note that a slice may still be equal to the \emph{empty slice}, reduced to a single edge and two vertices, $A=C$ and $B$.
For $b$ a non-negative integer, we say that a slice is $2b$-irreducible if it has girth at least $2b$, and every cycle of length $2b$ is the contour of an \emph{inner} face
of the slice. 

\medskip The following proposition is a slight variant\footnote{In
  \cite[Proposition 4.7]{polytightmaps} it is assumed that the maps
  are tight. As explained in the proof of this proposition, the
  construction does not require tightness but is ``compatible'' with
  it. Here, we restate this compatibility property as the first item
  of Proposition~\ref{prop:tightdecomp}.}  of \cite[Proposition
4.7]{polytightmaps}:
\begin{prop}
\label{prop:annulbij}
Fix an integer $n\geq3$ and positive integers $m_1,\ldots,m_n$. 
There is a bijection between the set of planar bipartite maps with $n$ labeled faces of respective degrees $2m_1,\ldots,2m_n$, 
and the set of tuples of the form $(\mathbf{m}_{12},\mathbf{s}_1,\ldots,\mathbf{s}_{k+1})$, for some $k$ between $0$ and $n-3$, such that:
\begin{itemize}
\item $\mathbf{m}_{12}$ is a planar map with exactly two (labeled) faces of respective degrees $2m_1$ and $2m_2$, and with $(k+1)$ among its vertices marked,
one of them being distinguished,
\item $\mathbf{s}_i$ is a non-empty slice for every $i=1,\ldots,k+1$,  
\item there is a bijection between $\{3,\ldots,n\}$ and the union of the sets of the inner faces of $\mathbf{s}_1,\ldots,\mathbf{s}_{k+1}$, such
that 
each $j=3,\ldots,n$ is mapped to a face of degree $2m_j$, and $3$ is mapped to an inner face of $\mathbf{s}_1$.
\end{itemize} 
\end{prop}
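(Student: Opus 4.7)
The statement is essentially \cite[Proposition 4.7]{polytightmaps} with the tightness hypothesis dropped, so I would reproduce the slice-decomposition construction of that reference while checking that tightness is never used in the bijection at the level of slices and two-face maps. The proof proceeds by exhibiting a reciprocal pair of constructions and verifying mutual inverseness.

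\textbf{Forward direction (cutting).} Given a planar bipartite map $M$ with $n\geq3$ labeled faces of degrees $2m_1,\ldots,2m_n$, I would draw $M$ with $f_1$ as the outer face and pick a canonical corner on the contour of $f_3$ (say the one fixed by the labeling convention of \cite{polytightmaps}). From the two endpoints of the edge adjacent to that corner, I would trace \emph{leftmost geodesics} through $M$ going around $f_3$ on opposite sides, until they first merge at a common vertex, called the apex. Together with the chosen edge of $f_3$, these two geodesics delineate a region which, when detached from $M$, forms an elementary slice $\mathbf{s}_1$ carrying $f_3$ as an inner face (and possibly some of $f_4,\ldots,f_n$). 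Detaching $\mathbf{s}_1$ leaves a smaller map carrying a new marked vertex at the attachment point of the apex. Iterating the procedure on the next face among $\{f_4,\ldots,f_n\}$ that still lies outside the already-extracted slices produces $\mathbf{s}_2,\mathbf{s}_3,\ldots$ and accumulates further marked vertices. After $k+1$ cuts (with $0\leq k\leq n-3$), the remaining map $\mathbf{m}_{12}$ carries only $f_1$ and $f_2$ and retains $k+1$ marked vertices, the one at the apex of $\mathbf{s}_1$ being distinguished. Non-emptiness of each $\mathbf{s}_i$ is automatic since each contains at least one inner face, and the prescribed bijection between $\{3,\ldots,n\}$ and the inner faces of the slices is inherited from the labeling of $M$.

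\textbf{Inverse and mutual inverseness.} The reverse map glues each $\mathbf{s}_i$ onto $\mathbf{m}_{12}$ by identifying its apex with the corresponding marked vertex of $\mathbf{m}_{12}$ (the distinguished one for $\mathbf{s}_1$, the others in the cyclic order prescribed by the face labels) and opening the marked vertex along the blue/red boundary data encoded by the slice. That cutting and gluing are mutually inverse is a local verification that hinges on the defining slice axioms: the uniqueness of the geodesic forming the red boundary is precisely what guarantees that re-running the leftmost-geodesic rule after regluing recovers the same cut, and conversely the leftmost-geodesic rule applied to $M$ produces cuts whose red boundaries are, tautologically, unique geodesics.

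\textbf{Main obstacle.} The only delicate point — and the reason the proof is not a bare citation — is checking that the tightness hypothesis of \cite[Proposition 4.7]{polytightmaps} is not needed for the present statement. Tightness enters there only to match degree-$1$ vertices of the original map to marked vertices on the decomposition side, a compatibility statement formulated separately as Proposition~\ref{prop:tightdecomp}; it plays no role in the leftmost-geodesic construction itself, which depends solely on bipartiteness and planarity. Once this is verified, the argument of \cite{polytightmaps} transfers verbatim and yields the claimed bijection.
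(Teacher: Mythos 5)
Your high-level plan — adapt \cite[Proposition 4.7]{polytightmaps} and observe that tightness plays no role in the slice/two-face decomposition — is exactly the paper's strategy, and your observation about tightness is spot on (the paper defers the compatibility-with-tightness statement to Proposition~\ref{prop:tightdecomp}). However, the forward construction you actually describe is not the one in \cite{polytightmaps}, and it is not well-defined as written. The genuine construction works entirely in the annulus between faces $1$ and $2$: one fixes the \emph{innermost minimal separating cycle} $\gamma$, passes to the universal cover of the annulus via $z\mapsto\exp(2\mathrm{i}\pi z)$, and cuts simultaneously along \emph{all} leftmost geodesics going from corners incident to the two lifted infinite faces $\tilde 1$, $\tilde 2$ towards $\tilde\gamma$. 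This produces the full collection of (possibly trivial or empty) elementary slices at once in a fundamental domain; $\mathbf{s}_1$ is then simply defined as the non-trivial, non-empty slice containing face $3$.

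Your construction instead traces geodesics from an edge of $f_3$ ``around $f_3$'' and extracts slices one at a time, iterating over $f_4,\ldots,f_n$. This differs in ways that break the argument: there is no analogue of $\gamma$, so the geodesics have no target and it is unclear why they merge or why the result is a slice; the decomposition is driven by $f_3,\ldots,f_n$ rather than by $f_1,f_2$, so nothing ensures that $f_2$ survives as a face of $\mathbf{m}_{12}$ (you only set $f_1$ as outer); and the new marked vertex is misplaced — in the paper each extracted slice is replaced by a \emph{marked empty slice whose non-apex endpoint is marked}, so the marking sits at a base endpoint, not ``at the attachment point of the apex.'' Because the inverse ``opening the marked vertex along the blue/red boundary data'' is never specified, the claimed mutual inverseness cannot actually be checked. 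The route you intended (cite and adapt) is correct, but the construction itself must be the annular one from \cite[Section 4.4]{polytightmaps}.
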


\begin{rem}
  \label{rem:markvert}
  A slight extension of this bijection applies to maps $\mathbf{m}$ which, in
  addition to the labeling of their faces, have some of their vertices
  marked (these can be intuitively regarded as ``faces of zero
  degree''). Such maps still correspond to tuples
  $(\mathbf{m}_{12},\mathbf{s}_1,\ldots,\mathbf{s}_{k+1})$ as above,
  but now each slice $\mathbf{s}_i$ may have some of its vertices not
  belonging to its red boundary marked, and may be reduced to the
  \emph{marked empty slice} (i.e.\ the empty slice having its non-apex
  vertex marked). Under this bijection, the number of marked vertices
  in the original map is equal to the total number of marked vertices
  in $\mathbf{s}_1,\ldots,\mathbf{s}_{k+1}$.
  In this paper, where we concentrate on irreducibility, we will always
  consider maps $\mathbf m$ without such faces of degree zero.
  \end{rem}

\begin{figure}
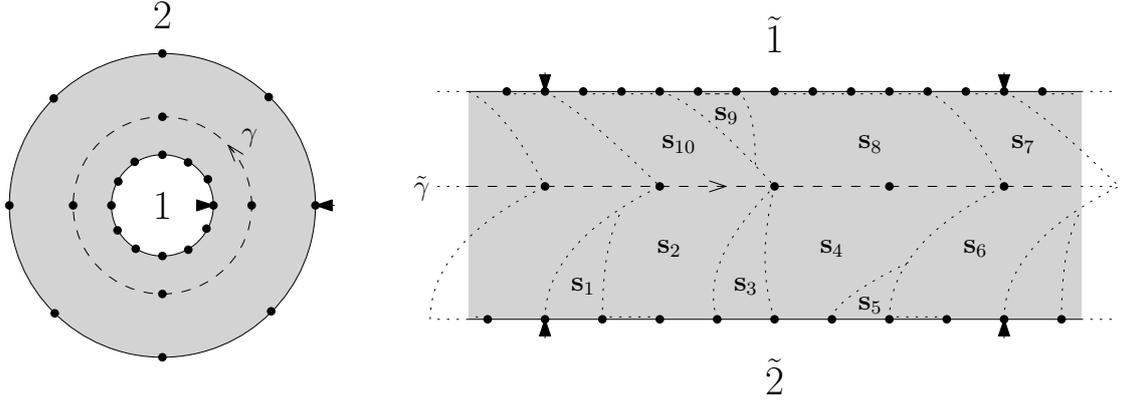

  \centering
  \fig{}{annuliftcutbis}
  \caption{Sketch of the decomposition of a map $\mathbf{m}$ with two
    marked faces $1$ and $2$ (left) into elementary slices, by cutting
    its preimage $\tilde{\mathbf{m}}$ (right) along leftmost
    geodesics. In the notations of the main text, we have $m_1=6$,
    $m_2=4$ and $k=9$.}
  \label{fig:annuliftcutbis}
\end{figure}

We refer to \cite[Section 4.4]{polytightmaps} for a detailed
description of the bijection. To summarize, it consists in the
following steps, illustrated on Figure~\ref{fig:annuliftcutbis}.
\begin{enumerate}
\item We start from a map $\mathbf{m}$ with $n$ labeled faces of respective degrees $2m_1,\ldots,2m_n$ which we draw in the complex plane 
with the face $1$ containing the origin and the face $2$ chosen as the outer face\footnote{Note that we interchange the roles of faces $1$ and $2$ with respect to~\cite[Section 4.4]{polytightmaps}.}. We call \emph{separating girth} the minimal length of a \emph{separating cycle},
i.e.\ a cycle enclosing the origin. We denote by $\gamma$ the innermost separating cycle of length equal to the separating girth. By convention we orient $\gamma$ in the counterclockwise direction.
\item We consider the preimage $\tilde{\mathbf{m}}$ of $\mathbf{m}$ by the mapping $z\mapsto \exp(2 \mathrm{i} \pi z)$: it is an infinite map with two faces 
$\tilde{1}$ and $\tilde{2}$ of infinite degrees, which is invariant under the translation $z\mapsto z+1$. The minimal separating cycle $\gamma$ lifts to a biinfinite geodesic 
$\tilde{\gamma}=(\tilde{\gamma}_i)_{i\in \Z}$, oriented from $-\infty$ to $+\infty$.
\item We cut $\tilde{\mathbf{m}}$ along each leftmost geodesic going from a corner incident to one of the infinite faces $\tilde{1}$ and $\tilde{2}$ to $\tilde{\gamma}_i$ for $i$ large enough
(indeed such a leftmost geodesic eventually coalesces with $\tilde{\gamma}$).
This decomposes $\tilde{\mathbf{m}}$ into a collection of elementary slices, possibly trivial or empty. Upon restricting to an appropriate fundamental domain, this collection 
is finite, and each face $3\ldots,n$ of $\mathbf{m}$ corresponds to an inner face of degree $2m_i$ appearing in exactly one slice. If the map $\mathbf{m}$ carries marked vertices, each of them appears in exactly one slice deprived of its red boundary, which allows to transfer the markings canonically.
\item We let $(\mathbf{s}_1,\ldots,\mathbf{s}_{k+1})$ be the elementary slices in this decomposition which are neither trivial nor empty, where by convention $\mathbf{s}_1$
contains the face corresponding to face $3$, which allows us to list the other slices in some canonical way.
Note that, since each of the $\mathbf{s}_1,\ldots,\mathbf{s}_{k+1}$ contains at least a face, we have necessarily $k \leq n-3$.
\item By replacing the slices $(\mathbf{s}_1,\ldots,\mathbf{s}_{k+1})$ by marked empty slices (i.e., empty slices with the non-apex vertex marked), 
and performing the slice decomposition backwards, we obtain the two-face map $\mathbf{m}_{12}$ with its $k+1$ marked vertices. 
The marked empty slice replacing $\mathbf{s}_1$ gives rise to the distinguished marked vertex in $\mathbf{m}_{12}$.
\end{enumerate}

Let us record some useful properties of the above bijection in the
following proposition, which combines the discussions
of~\cite[Section~4.4]{polytightmaps} (regarding compatibility with
tightness) and~\cite[Section~9.3]{irredmaps} (regarding compatibility
with irreducibility and girth constraints).

\begin{prop} \label{prop:tightdecomp}
Let $\mathbf{m}$ be a planar map with $n$ labeled faces, let $(\mathbf{m}_{12},\mathbf{s}_1,\ldots,\mathbf{s}_{k+1})$ be its image
by the above bijection. Then:
\begin{itemize}
\item $\mathbf{m}$ is tight (i.e.\ has no leaves) if and only if all among $\mathbf{m}_{12},\mathbf{s}_1,\ldots,\mathbf{s}_{k+1}$ are tight
(note that $\mathbf{m}_{12}$ may have leaves provided they are marked), 
\item for any $b\geq 1$, $\mathbf{m}$ is \emph{essentially $2b$-irreducible} (i.e.\ every non-separating cycle has length at least $2b$ and
every such cycle of length $2b$ is the contour of a face) if and only if all among $\mathbf{s}_1,\ldots,\mathbf{s}_{k+1}$ are $2b$-irreducible,
\item the separating girth of the map $\mathbf{m}$ is equal to the length of the unique cycle of $\mathbf{m}_{12}$,
\item the contour of face $2$ is the unique minimal separating cycle in $\mathbf{m}$ if and only if the corresponding second face in $\mathbf{m}_{12}$ is simple and has no incident marked vertex.
\end{itemize}
From these properties, denoting by $2m_1$ and $2m_2$ the degrees of faces $1$ and $2$, we deduce that, for any $b\geq 1$:
\begin{itemize}
\item if $m_1,m_2>b$, $\mathbf{m}$ is $2b$-irreducible if and only if all among $\mathbf{s}_1,\ldots,\mathbf{s}_{k+1}$ are $2b$-irreducible,
and the length of the unique cycle of $\mathbf{m}_{12}$ is at least $2(b+1)$,
\item if $m_1>b$, $m_2=b$, $\mathbf{m}$ is $2b$-irreducible if and only if all among $\mathbf{s}_1,\ldots,\mathbf{s}_{k+1}$ are $2b$-irreducible,
  the length of the unique cycle of $\mathbf{m}_{12}$ is $2b$ (hence this cycle is the contour of face $2$),
  and none of the vertices of this cycle are marked.
\end{itemize}
\end{prop}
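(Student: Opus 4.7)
The plan is to establish the four bullet-point properties one at a time, borrowing heavily from the analyses in \cite{polytightmaps} and \cite{irredmaps}, and then to derive the last two statements as direct corollaries by combining them. Since the bijection of Proposition~\ref{prop:annulbij} is explicit (cutting along leftmost geodesics on the universal cover, then closing up), each property reduces to checking how a local feature (a leaf, a short cycle, a separating cycle) transports through the cut-and-glue operation.

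First I would treat the tightness item. The cutting step never reduces the degree of a vertex, as each cut along a leftmost geodesic only splits vertices into multiple copies (each copy receiving at least one incident edge from the original). Conversely, the closing step identifies vertices but again cannot turn a non-leaf into a leaf. Hence the set of leaves of $\mathbf{m}$ is in canonical bijection with the union of the sets of leaves of $\mathbf{m}_{12}$ and of the $\mathbf{s}_i$, except that the leaves of $\mathbf{m}_{12}$ that come from the \emph{marked empty slices} placeholder are always marked (this is exactly the compatibility remark of \cite[Section~4.4]{polytightmaps}). This gives the first item.

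Next I would handle the irreducibility item. A non-separating cycle $C$ of $\mathbf{m}$ lifts on the universal cover to a closed cycle, which lies in one fundamental domain, and cannot cross the lifted leftmost geodesics (as these are themselves geodesics and \emph{leftmost}, a standard argument). Hence $C$ is entirely contained in a single slice $\mathbf{s}_i$. Conversely, any cycle of a slice is a non-separating cycle of $\mathbf{m}$. The condition on cycles of length exactly $2b$ being face contours then matches between $\mathbf{m}$ and the slices because the inner faces of the $\mathbf{s}_i$ are precisely the non-marked faces $3,\ldots,n$ of $\mathbf{m}$. This is the content of \cite[Section~9.3]{irredmaps} and gives the second item. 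The third item (separating girth $=$ length of the unique cycle of $\mathbf{m}_{12}$) is built into the construction: $\mathbf{m}_{12}$ is what remains after contracting every slice to an empty slice, so its unique cycle is the image of $\gamma$ and has the same length, while any separating cycle of $\mathbf{m}$ projects to a cycle in $\mathbf{m}_{12}$ of no greater length. The fourth item is a direct unpacking of the construction in the equality case: the contour of face $2$ in $\mathbf{m}$ coincides with $\gamma$ exactly when no slice is attached to $\gamma$ from the outer side, which on $\mathbf{m}_{12}$ reads as ``face $2$ is simple and has no marked incident vertex''.

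Finally I would deduce the two corollaries by combining the second and third items with the definition of $2b$-irreducibility, which is essential $2b$-irreducibility together with a control of separating cycles. When $m_1,m_2>b$, a minimal separating cycle of length $2b$ cannot be the contour of face $1$ or face $2$, so $2b$-irreducibility forces the separating girth to be at least $2(b+1)$, i.e.\ the unique cycle of $\mathbf{m}_{12}$ has length $\geq 2(b+1)$. When $m_1>b$ and $m_2=b$, the contour of face $2$ is an admissible separating cycle of length $2b$, and $2b$-irreducibility forces every minimal separating cycle to coincide with it; by the fourth item, this means the second face of $\mathbf{m}_{12}$ is simple and carries no marked vertex. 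The main obstacle, modest as it is, will be the bookkeeping of which references in \cite{polytightmaps,irredmaps} provide each ingredient, and the careful case analysis for the last two items where the interplay between separating girth, face degrees and markings must be made precise; the actual geometric and combinatorial content is already in the literature.
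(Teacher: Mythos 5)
The paper gives no in-line proof of this proposition; it only cites \cite[Section~4.4]{polytightmaps} for the tightness item and \cite[Section~9.3]{irredmaps} for the irreducibility and girth items---the same sources you lean on---so your plan is organizationally in line with the paper's. However, your argument for the irreducibility item rests on a claim that is wrong as stated: that a non-separating cycle, lifted to the universal cover, ``cannot cross the lifted leftmost geodesics,'' and hence lies in a single slice. Nothing forbids a short cycle from straddling a cut geodesic. What the cited reference actually uses is a shortening principle: if a non-separating cycle of length $<2b$ (or a non-facial one of length exactly $2b$) meets a cut geodesic at two vertices $u$ and $v$, then replacing its arc on one side of the cut by the geodesic segment from $u$ to $v$ does not increase length---because the cut is a geodesic---and yields a closed walk containing a cycle of no greater length that straddles strictly fewer cuts; iterating produces such a cycle inside a single slice, contradicting the $2b$-irreducibility of that slice. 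Equivalently, on the reconstruction side, ``gluing along geodesics cannot create short cycles,'' which is exactly how the paper phrases the analogous point in the proof of Proposition~\ref{prop:decompI}. The ``leftmost'' condition ensures the cuts are pairwise non-crossing and the reconstruction is well-defined; it does not make them avoid cycles.

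A smaller wobble affects the tightness item: you write that each copy of a split vertex ``receives at least one incident edge,'' which does not preclude a copy having degree one, i.e.\ being a leaf. The right observation is that every vertex on the boundary of a non-trivial, non-empty slice lies at the junction of at least two boundary edges (in particular, the apex is the common endpoint of the red and blue boundaries, both of positive length), so each boundary copy has degree at least two, and leaves of a slice can only come from interior vertices, whose degree is unchanged from $\mathbf{m}$. With these two points repaired, the rest of your outline---the separating-girth item, the face-$2$ characterization, and the two deductions---is at the same level of detail as the paper's own treatment.
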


In view of Propositions~\ref{prop:annulbij} and \ref{prop:tightdecomp}, the problem of enumerating planar bipartite tight $2b$-irreducible maps is highly dependent on our ability to characterize tight $2b$-irreducible slices.
This is the purpose of the following sections where we show that $2b$-irreducible slices have a canonical decomposition which allows us to encode them by $b$-decorated plane trees, themselves formed of so-called \emph{$b$-arrow trees} attached to each other via \emph{blossoming vertices}. 

\begin{rem} \label{rem:singleslice} By specializing
  Propositions~\ref{prop:annulbij} and \ref{prop:tightdecomp} to the
  case $m_1=b+1$ and $m_2=b$, we find that planar tight
  $2b$-irreducible maps with $n$ labeled faces of respective degrees
  $2b+2,2b,m_3,\ldots,m_n$ are in bijection with tight
  $2b$-irreducible slices with $n-2$ labeled inner faces of respective
  degrees $m_3,\ldots,m_n$. Indeed, we note that the map
  $\mathbf{m}_{12}$ produced in the decomposition consists of a cycle
  of length $2b$ to which is attached a single edge leading to a
  single marked vertex, which forces $k=0$ hence a single slice is
  obtained.
\end{rem}

\subsection{Recursive decomposition of irreducible slices}
\label{sec:quasislices}
Let $b$ be a positive integer. As explained in details in \cite{irredmaps}, $2b$-irreducible slices can be decomposed recursively, at the price of introducing a slightly extended notion of slices. More precisely, for $p \geq 0$, we define a \emph{$2p$-slice} as   
a planar map with one marked face (the outer face) having one marked
incident vertex (the apex $A$) and one marked incident edge (the base $BC$) satisfying the following constraints:
\begin{itemize}
\item the blue boundary (defined as the portion $AB$ of the contour of the outer face when going from $A$ to $B$ with the outer face on the right) is a shortest path among all paths
connecting $B$ to $A$ \emph{which do not pass via the base},
\item  the red boundary (portion $CA$ of the contour of the outer face when going from $C$ to $A$ with the outer face on the right) is the unique geodesic between $C$ and $A$,
\item the apex is the only vertex common to the blue and red boundaries,
\item the length of $AB$ minus the length of $CA$ is equal to $2p+1$,
\item the slice has at least one inner face.
\end{itemize}
\begin{figure}
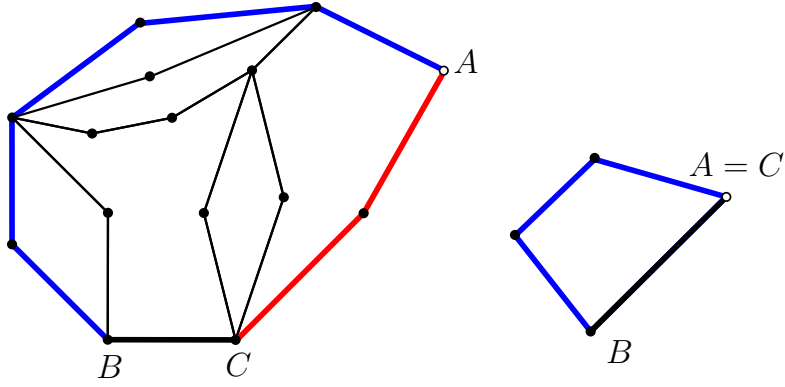

  \centering
  \fig{.7}{2p-slice-types}
  \caption{Examples of $2$-slices: a generic one (left)---note that it
    differs from the $0$-slice of Figure~\ref{fig:slice-types} only by
    a shift of the base edge---and the $4$-angle slice (right). Both
    are $4$-irreducible.}
  \label{fig:2p-slice-types}
\end{figure}
See Figure~\ref{fig:2p-slice-types} for examples.
Note that a $0$-slice is nothing but a non-trivial, non-empty elementary slice, and that the contour of the outer face of a $2p$-slice is simple.  A $2p$-slice
is said $2b$-irreducible if it has girth at least $2b$ and if every
cycle of length $2b$ is the contour of an inner face.

We now discuss
the precise recursive decomposition of a $2b$-irreducible $2p$-slice,
for $p \leq b$ (we will not need the case $p>b$ in this paper). This
requires us to distinguish three cases:
\begin{itemize}
  \item[(I)] when $0 \leq p \leq b-1$, except special case (III) below;
  \item[(II)] when $p=b$;
  \item[(III)] when $p = b-1$ and the outer face has degree $2b$.
\end{itemize}

\paragraph{Recursive decomposition of a $\boldsymbol{2b}$-irreducible $\boldsymbol{2p}$-slice, case  (I): $\boldsymbol{0\leq p\leq b-1}$.}
\begin{figure}[h]
  \centering
  \includegraphics{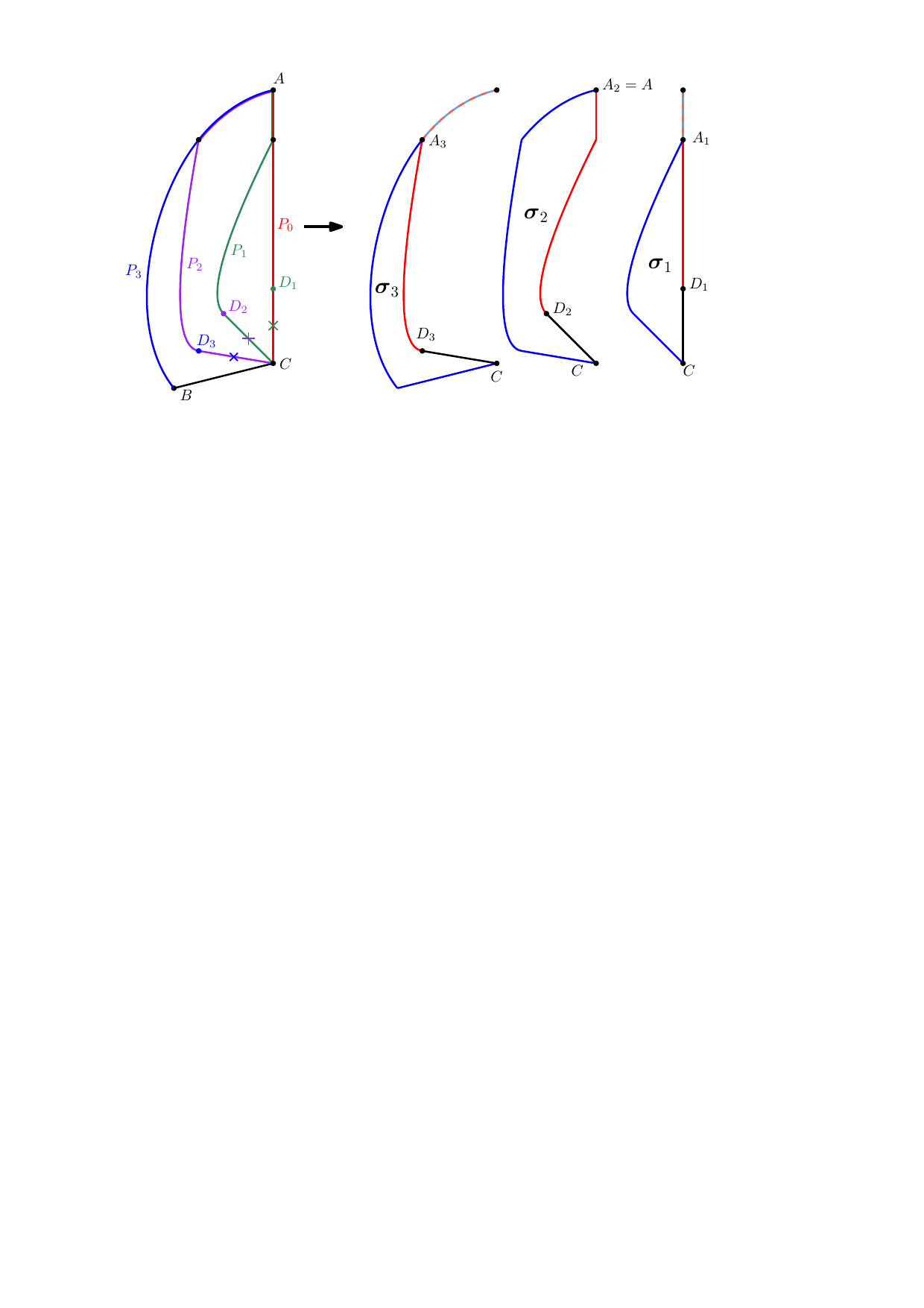}
  \caption{Case (I) of the decomposition.
    When $p \leq b-1$ and the slice is not reduced to the $2b$-angle slice, we know that $A \neq C$
    so we name $D_1$ the first vertex on the red boundary $P_0$.
    $P_1$ is the leftmost shortest path from C to A avoiding $CD_1$.
    Cutting along $P_1$ yields a $2p_1$-slice $\boldsymbol{\sigma}_1$ with apex $A_1$ and base edge $CD_1$.
    We start again in the rest of the slice (which is now a $2(p-p_1)$-slice with base $BC$),
    until $P_q$ goes through $BC$ and the blue boundary (here $q=3$).
  }
  \label{fig:step_II_primal}
\end{figure}

Take a $2b$-irreducible $2p$-slice $\boldsymbol{\sigma}$, as defined
just above, with $0\leq p\leq b-1$. The first step of the
decomposition, illustrated on Figure \ref{fig:step_II_primal}, is done
as follows: let us denote by $P_0$ the red boundary, travelled from
$C$ to $A$, and by $P_\infty$ the longer path from $C$ to $A$ obtained
by prefixing the blue boundary travelled from $B$ to $A$ with the base
edge travelled from $C$ to $B$. The lengths of $P_\infty$ and $P_0$
differ by $2p+2$, and their sum is equal to the degree of the outer
face. Using the $2b$-irreducibility constraint, we deduce that the
length of $P_0$ cannot be equal to $0$ unless we have $p=b-1$ and the
degree of the outer face is exactly $2b$: this case corresponds to a
unique configuration, called the \emph{$2b$-angle slice}, which will
be treated separately in case (III) below.  In all other cases, the
length of $P_0$ is at least $1$, and we consider the leftmost shortest
path $P_1$ among all paths from $C$ to $A$ \emph{which do not pass via
  the first edge $CD_1$ of $P_0$}.  Then, since $P_0$ is the unique
geodesic from $C$ to $A$, the difference between the length of $P_1$
and that of $P_0$ must be positive, and is even by bipartiteness: it
is equal to $2p_1$ for some $p_1\geq 1$. The part of
$\boldsymbol{\sigma}$ in-between $P_0$ and $P_1$ is then a
$2b$-irreducible $2p_1$-slice with base $CD_1$, which we denote by
$\boldsymbol{\sigma}_1$.  Note that we have $p_1 \leq p+1$ since $P_1$
is not longer than $P_\infty$, and if $p_1=p+1$ then $P_1=P_\infty$:
in this case, $\boldsymbol{\sigma}_1$ is the same map as
$\boldsymbol{\sigma}$, except that we have shifted the base edge by
one step to the right, so it becomes a $2(p+1)$-slice. For
$p_1 \leq p$, we continue the decomposition iteratively: denoting by
$D_2$ the endpoint of the first edge of $P_1$, we consider the
leftmost shortest path $P_2$ among all paths from $C$ to $A$ which
stay in-between $P_1$ and $P_\infty$ and do not pass via the edge
$CD_2$.  Then, the length of $P_2$ is equal to that of $P_1$ plus
$2p_2$ for some $p_2\geq 1$, and the part of the map in-between $P_1$
and $P_2$ is a $2b$-irreducible $2p_2$-slice with base $CD_2$, which
we denote by $\boldsymbol{\sigma}_2$. As $P_2$ is not longer than
$P_\infty$, we have $p_1+p_2 \leq p+1$, and in the case of equality we
have $P_2=P_\infty$, and we may stop the iteration. For
$p_1+p_2 \leq p$, we continue the iteration, defining a path $P_3$ and
a $2p_3$-slice $\boldsymbol{\sigma}_3$ with $p_3 \geq 1$ and
$p_1+p_2+p_3 \leq p+1$, and so on. Eventually, after $q$ iterations,
we will have $P_q=P_\infty$, and $p_1+\cdots+p_q = p+1$, and we stop
here. What we have done so far can be summarized into the following:
\begin{prop}
  \label{prop:decompI}
  For any integers $b, p$ with $0 \leq p \leq b-1$, there is a
  face-preserving\footnote{By \emph{face-preserving}, we mean that
    there is a degree-preserving bijection between the inner faces of
    $\boldsymbol{\sigma}$ and those of
    $\boldsymbol{\sigma}_1,\ldots,\boldsymbol{\sigma}_q$.}  bijection
  between the set of $2b$-irreducible $2p$-slices
  $\boldsymbol{\sigma}$ not equal to the $2b$-angle slice, and the set
  of sequences of the form
  $(\boldsymbol{\sigma}_1,\ldots,\boldsymbol{\sigma}_q)$ where $q$ is
  a positive integer and, for any $j=1,\ldots,q$,
  $\boldsymbol{\sigma}_j$ is a $2b$-irreducible $2p_j$-slice for some
  $p_j\geq 1$, with $p_1+\cdots+p_q=p+1$.
\end{prop}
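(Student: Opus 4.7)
The plan is to formalize the iterative cutting procedure described before the statement as a well-defined map, check that each extracted piece is a valid $2b$-irreducible $2p_j$-slice with $p_j \geq 1$, exhibit an explicit inverse by gluing, and verify that the two constructions are mutually inverse. The face-preservation statement will follow automatically, since the strips $\boldsymbol{\sigma}_j$ tile $\boldsymbol{\sigma}$ and hence each inner face of $\boldsymbol{\sigma}$ is inherited by exactly one $\boldsymbol{\sigma}_j$.

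The technical engine of the argument is the following invariant, to be maintained inductively on $j$: at stage $j$, the path $P_{j-1}$ is the unique geodesic from $C$ to $A$ in the closed region of $\boldsymbol{\sigma}$ lying between $P_{j-2}$ and $P_\infty$. The base case $j=1$ is precisely the uniqueness of the red boundary from the definition of a slice. Granting the invariant at stage $j$, existence of $P_j$ follows because $P_\infty$ is always an admissible competitor, and the gap $p_j = (|P_j|-|P_{j-1}|)/2$ is a strictly positive integer by uniqueness and bipartite parity. One then checks the slice axioms for the strip $\boldsymbol{\sigma}_j$ bounded by $P_{j-1}$ and $P_j$: its red boundary (the sub-path of $P_{j-1}$ from $D_j$ to $A$) is the unique geodesic between its endpoints in $\boldsymbol{\sigma}_j$ (a shorter or alternative geodesic, prepended with the edge $CD_j$, would contradict the invariant); its blue boundary (the path $P_j$) is a shortest path from $C$ to $A$ in $\boldsymbol{\sigma}_j$ avoiding its base by the minimality used to define $P_j$; the apex condition holds, since a second common vertex of blue and red boundaries would yield two distinct geodesics from $C$ to it, contradicting the invariant; the length difference is $|P_j|-(|P_{j-1}|-1) = 2p_j+1$, as required; and $2b$-irreducibility of $\boldsymbol{\sigma}_j$ is immediate because its cycles and its inner faces are all inherited from $\boldsymbol{\sigma}$. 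Propagating the invariant to stage $j+1$ requires that $P_j$ is the unique geodesic in its own allowed region, which follows from leftmost-minimality. Termination is then immediate: the partial sums $p_1+\cdots+p_j$ strictly increase and are bounded above by $p+1$, and the loop halts exactly when equality is reached and $P_j = P_\infty$. The hypothesis $p \leq b-1$ together with the exclusion of the $2b$-angle slice is what ensures $|P_{j-1}| \geq 1$ throughout, so that the first edge of $P_{j-1}$ defining $D_j$ always exists: by $2b$-irreducibility, $|P_{j-1}| = 0$ would force $P_\infty$ to be a cycle of length at most $2b$ contouring a face, which is precisely the excluded configuration.

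For the inverse direction, given a sequence $(\boldsymbol{\sigma}_1,\ldots,\boldsymbol{\sigma}_q)$ as in the statement, one glues from right to left by identifying the blue boundary of $\boldsymbol{\sigma}_j$ with the red boundary of $\boldsymbol{\sigma}_{j+1}$ prepended with its base edge; the relevant lengths match thanks to the $2p_j$-slice length conditions. By induction on $q$ the resulting map is a $2b$-irreducible $2p$-slice with $p = p_1+\cdots+p_q - 1$, and the forward decomposition applied to it reproduces the original sequence, the key point being that the blue boundary of each $\boldsymbol{\sigma}_j$ inside the glued map is indeed the leftmost shortest path avoiding the first edge of the corresponding red boundary. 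The main obstacle is exactly this propagation of the uniqueness-and-leftmost invariant between scales: it is the single ingredient that justifies all the slice axioms for every intermediate $\boldsymbol{\sigma}_j$, and that makes the forward and inverse procedures reciprocal; once it is proved, the rest of the verification reduces to routine combinatorial bookkeeping.
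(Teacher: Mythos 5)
Your proposal follows essentially the same route as the paper: the forward map is the iterative cut along leftmost shortest paths $P_1, P_2, \ldots$ described in Section~\ref{sec:quasislices}, and the inverse is gluing, with $2b$-irreducibility of each piece inherited directly from $\boldsymbol\sigma$. The argument that $|P_0|\geq 1$ precisely when $\boldsymbol\sigma$ is not the $2b$-angle slice is also the same as the paper's.

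One genuine slip: your stated invariant, ``at stage $j$, $P_{j-1}$ is the unique geodesic from $C$ to $A$ in the closed region between $P_{j-2}$ and $P_\infty$,'' is false for $j\geq 2$, because the strictly shorter path $P_{j-2}$ already lies in that region. The correct invariant (and the one your subsequent uniqueness arguments actually use) replaces $P_{j-2}$ by $P_{j-1}$: after excising $\boldsymbol\sigma_1,\ldots,\boldsymbol\sigma_{j-1}$, the path $P_{j-1}$ is the unique geodesic from $C$ to $A$ in the remaining region between $P_{j-1}$ and $P_\infty$, which is exactly what makes that remainder a $2(p-p_1-\cdots-p_{j-1})$-slice.

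Finally, be aware that the two load-bearing points are only sketched in both your write-up and the paper. You assert that uniqueness of $P_j$ in the remainder ``follows from leftmost-minimality,'' but leftmost-minimality alone gives that $P_j$ is the \emph{leftmost} shortest path, not that it is the \emph{unique} one in the remainder; ruling out a distinct geodesic strictly to its right needs an additional argument. Likewise, for the inverse direction, the crux is that gluing along geodesics cannot create a cycle of length $<2b$ nor a non-facial cycle of length exactly $2b$; you invoke this via ``by induction on $q$'' whereas the paper explicitly defers it to \cite[Section 5.1]{irredmaps}. Neither treatment is self-contained on these two points, so this is not a defect of your proposal relative to the paper, but it is worth naming them as the real content rather than characterizing everything beyond the invariant as ``routine combinatorial bookkeeping.''
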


The bijectivity can be checked by exhibiting the reverse bijection:
given a sequence
$(\boldsymbol{\sigma}_1,\ldots,\boldsymbol{\sigma}_q)$ as in the
proposition, it consists in gluing its elements into a single
$2p$-slice $\boldsymbol{\sigma}$. The key property is that
$2b$-irreducibility is preserved in this operation: in a nutshell this
is because we are gluing along geodesics, hence we cannot create
``short'' cycles. See \cite[Section 5.1]{irredmaps}.

We then continue the recursion by further decomposing the
$\boldsymbol{\sigma}_j$. Two situations may occur:
\begin{itemize}
\item if $q \geq 2$, or if $p<b-1$, then each $\boldsymbol{\sigma}_j$
  is a $2p_j$-slice with $p_j \leq b-1$: we may apply to it again the case
  (I) of the decomposition we have just described, or possibly the case
  (III) described below,
\item otherwise, for $q=1$ and $p=b-1$, we get a single $2b$-slice
  $\boldsymbol{\sigma}_1$: we apply to it the case (II) of the
  decomposition described below.
\end{itemize}
Let us observe that, in each ``branch'' of the recursion, we will
eventually arrive at either case (II) or case (III). Indeed, for
$q>2$, each $\boldsymbol{\sigma}_j$ contains fewer faces than
$\boldsymbol{\sigma}$ while, for $q=1$, we pass from a $2p$-slice to a
$2(p+1)$-slice. So, we end up with either the $2b$-angle slice or a
$2b$-irreducible $2b$-slice.

\begin{rem}
  \label{rem:decompIzero}
  This decomposition holds in particular for $p=0$, i.e.\ for
  non-empty $2b$-irreducible slices. In this case, we have
  necessarily\footnote{Unless we are in case (III), which can only
    happen if $0=b-1$ hence $b=1$.} $q=1$ and $p_1=1$. This
  corresponds to transforming the $0$-slice into a $2$-slice by
  changing its base from $BC$ to $CD_1$. For instance, applying the
  decomposition to the $0$-slice of Figure~\ref{fig:slice-types}-left,
  we obtain the $2$-slice of Figure~\ref{fig:2p-slice-types}-left,
  where $CD_1$ is renamed $BC$.
\end{rem}

\paragraph{Recursive decomposition of a $\boldsymbol{2b}$-irreducible $\boldsymbol{2p}$-slice, case (II): $\boldsymbol{p= b}$.}
\begin{figure}[h]
  \centering
  \includegraphics[width=.9143\textwidth]{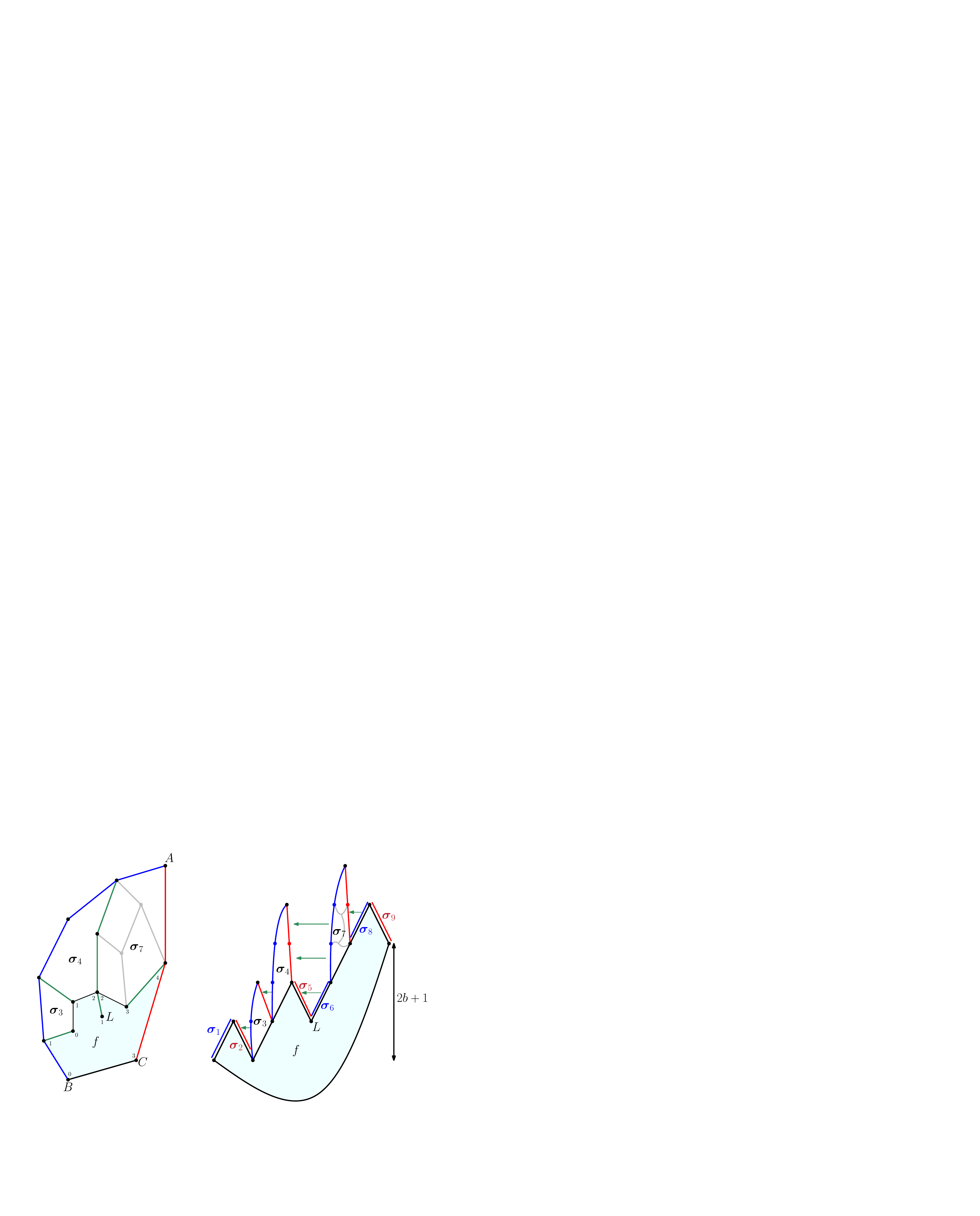}
  \caption{Case (II) of the decomposition, here for $p=b=1$.  This
    2-irreducible 2-slice is composed of a face $f$ of degree $10$,
    and $9$ elementary slices.  Left: we draw the (green) leftmost
    geodesic towards the apex from each vertex incident to $f$; this delimits the
    slices $\boldsymbol{\sigma}_1, \ldots, \boldsymbol{\sigma}_9$
    (here, the slices $\boldsymbol{\sigma}_1, \boldsymbol{\sigma}_6$
    and $\boldsymbol{\sigma}_8$ are empty, and
    $\boldsymbol{\sigma}_2, \boldsymbol{\sigma}_5$ and
    $\boldsymbol{\sigma}_9$ are trivial). The small labels in the
    corners of $f$ are the proximities to the apex, here equal to $5$ minus the length of a shortest path to $A$ avoiding $BC$: they go from $0$
    at $B$ to $2b+1=3$ at $C$ by steps of~$\pm 1$.  Right: the result
    after cutting along the geodesics. The green arrows show how to glue back the
    elementary slice boundaries, in order to recover the original
    slice.  Note that this slice is not tight, since it has a leaf $L$
    incident to $f$.}
  \label{fig:step_III_dual}
\end{figure}
Suppose now that we have a $2b$-irreducible $2b$-slice
$\boldsymbol{\sigma}$ with apex $A$ and base $BC$, and consider the
inner face $f$ immediately to the left of the base. Denoting by $2m$
the degree of $f$, consider its sequence of incident corners
$(c_0, \ldots c_{2m-1})$, as read clockwise around $f$ when going from
$B$ to $C$, and introduce the \emph{proximity} to the apex
$\ell_j:=d(A,B)- d(A,V_j)$, where $d(\cdot,\cdot)$ is the graph
distance in $\boldsymbol{\sigma}$ deprived of its base edge, and
$V_j$ is the vertex incident to $c_j$ for $j=0,\ldots, 2m-1$. See
Figure \ref{fig:step_III_dual} for an example. We have in particular
$\ell_0=0$ (since $V_0=B$), $\ell_{2m-1}=2b+1$ (by the definition of a
$2b$-slice), and $|\ell_j-\ell_{j-1}|=1$ for any
$j=1,\ldots,2m-1$. Note that this implies $m \geq b+1$. We may now cut
the slice along the leftmost geodesic from $V_j$ to $A$, for all $j$.
It is easily seen that the part of the map in-between the leftmost
geodesic from $V_{j-1}$ to $A$ and the leftmost geodesic from $V_j$ to
$A$ is a $2b$-irreducible elementary slice $\boldsymbol{\sigma}_j$
with base $V_{j-1}V_j$ for all $j\in \{1,\ldots,2m-1\}$. More
precisely, $\boldsymbol{\sigma}_j$ is the trivial slice whenever
$\ell_j-\ell_{j-1}=-1$ (this occurs $m-b-1$ times), while it is the
empty slice or a $2b$-irreducible $0$-slice whenever
$\ell_j-\ell_{j-1}=1$ (this occurs $m+b$ times). To summarize, we have
the following:
\begin{prop}
  \label{prop:decompIII}
  For any integers $m > b \geq 1$, there is a
  quasi-face-preserving\footnote{By this, we mean that all inner faces
    except the inner face of degree $2m$ incident to the base edge are
    preserved.}  bijection between the set of $2b$-irreducible
  $2b$-slices where the base edge is incident to an inner face of
  degree $2m$, and the set of $(2m-1)$-tuples of $2b$-irreducible
  elementary slices, exactly $m-b-1$ of which being equal to the
  trivial elementary slice. (Note that the $m+b$ remaining elementary
  slices are necessarily either equal to the empty slice, or to a
  $2b$-irreducible $0$-slice.)
\end{prop}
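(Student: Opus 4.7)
The plan is to construct the bijection by implementing the cutting procedure sketched just before the statement, then exhibiting the inverse gluing operation, and finally verifying that $2b$-irreducibility is transported in both directions.

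Forward direction. Given a $2b$-irreducible $2b$-slice $\boldsymbol{\sigma}$ whose base $BC$ borders on the left an inner face $f$ of degree $2m$, let $V_0 = B, V_1, \ldots, V_{2m-1} = C$ be the vertices incident to the corners $c_0, \ldots, c_{2m-1}$ of $f$. For each $j$ I cut along the leftmost geodesic from $V_j$ to $A$ in the slice deprived of its base edge. Since $V_{j-1} V_j$ is an edge of $f$, the proximity sequence $\ell_j$ satisfies $|\ell_j - \ell_{j-1}|=1$ by bipartiteness; together with $\ell_0 = 0$ and $\ell_{2m-1} = 2b+1$ this yields $m+b$ up-steps and $m-b-1$ down-steps. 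The region $\boldsymbol{\sigma}_j$ delimited by two consecutive leftmost geodesics and the edge $V_{j-1}V_j$ inherits the structure of an elementary slice with apex $A$ and base $V_{j-1} V_j$: its blue and red boundaries are geodesics to $A$, the red one uniquely shortest as a leftmost geodesic, and the two touch only at $A$. When $\ell_j - \ell_{j-1} = -1$, the leftmost geodesic from $V_{j-1}$ must start with the edge $V_{j-1} V_j$ (the unique neighbour strictly closer to $A$ through the leftmost choice), which forces $\boldsymbol{\sigma}_j$ to be trivial; otherwise $\boldsymbol{\sigma}_j$ is empty or a genuine $0$-slice. Finally, every cycle of $\boldsymbol{\sigma}_j$ is a cycle of $\boldsymbol{\sigma}$, so $2b$-irreducibility descends.

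Inverse direction. Given a $(2m-1)$-tuple as in the statement, I reconstruct a candidate $2b$-slice by identifying all the apices to a single vertex $A$ and sequentially gluing the red boundary of $\boldsymbol{\sigma}_j$ with the blue boundary of $\boldsymbol{\sigma}_{j+1}$ edge by edge. The lengths along the seams match because the signed increments $\pm 1$ between consecutive boundaries are precisely the down-/up-step pattern dictated by the trivial/non-trivial type of each $\boldsymbol{\sigma}_j$, and the outer blue and red boundaries of the $\boldsymbol{\sigma}_j$ concatenate into the blue and red boundaries of the candidate slice. The bases $V_{j-1} V_j$, together with an added edge $V_{2m-1} V_0$ playing the role of the new base, then bound a face $f$ of degree $2m$; the total length difference between the outer blue and red boundaries is $(m+b) - (m-b-1) = 2b+1$, giving a candidate $2b$-slice.

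The main obstacle is checking that this gluing indeed produces a $2b$-irreducible $2b$-slice: I must verify that the blue boundary is a shortest path from $B$ to $A$ avoiding the base, that the red boundary is the unique geodesic from $C$ to $A$, that the apex is the only vertex shared between them, and crucially that no cycle of length less than $2b$ appears and that any cycle of length exactly $2b$ bounds a face. A cycle contained in a single $\boldsymbol{\sigma}_j$ is handled by its own $2b$-irreducibility; a cycle traversing several pieces must cross the glued geodesic seams, and invoking the standard gluing-along-geodesics argument used in the proof of Proposition~\ref{prop:decompI} and detailed in \cite[Section~5.1]{irredmaps}, one can shorten such a cycle along the seams until it either collapses (contradicting the supposed short length) or coincides with the contour of $f$. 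Carrying out this case analysis cleanly, and checking that the two constructions are mutually inverse (which is direct since the leftmost geodesic from $V_j$ to $A$ in the glued map coincides with the $j$-th seam), completes the proof.
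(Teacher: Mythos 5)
Your forward direction is essentially the paper's: cut along leftmost geodesics from the $V_j$, read off the proximity profile, and observe that down-steps give trivial slices while up-steps give empty or genuine $0$-slices, and that $2b$-irreducibility passes to the pieces because every cycle of a piece is a cycle of the ambient slice. That part is fine.

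The gap is in the inverse direction, precisely where the paper flags ``the most subtle point.'' Your case analysis handles cycles crossing the geodesic seams by the standard push-to-the-geodesic argument, but a cycle using the \emph{newly added} base edge $V_{2m-1}V_0=BC$ is not one that ``crosses a seam'': this edge lies on no seam at all, so the shortening machinery you invoke from Proposition~\ref{prop:decompI} does not apply to it, and your claim that such a cycle must collapse or coincide with the contour of $f$ is asserted rather than proved. The needed observation is different in nature: in the glued map \emph{before} the base edge is re-inserted, $B$ and $C$ sit at graph distance at least $2b+1$ (this is exactly what the proximity profile going from $\ell_0=0$ to $\ell_{2m-1}=2b+1$ encodes, via the triangle inequality applied to $A$), so every cycle through the new edge has length at least $2b+2$ and in particular cannot be a non-facial cycle of length $\leq 2b$. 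This is the reason the paper insists on running step~(II) only on a $2b$-slice and not on a $2p$-slice with $p<b$: applied to, say, a $0$-slice the same gluing would reconnect two vertices at distance $1$, and could create arbitrarily short non-facial cycles. Your proof should isolate and state this distance argument explicitly; without it, the claim that $2b$-irreducibility is preserved by the gluing is not established.

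A smaller point: you list several verifications (blue boundary is a shortest path avoiding the base, red boundary is the unique geodesic, apex is the only shared vertex) as things ``to check'' but never discharge them. Each is straightforward once one notices that the concatenated red boundary is a concatenation of uniquely leftmost geodesics hence still a unique geodesic, and that the concatenated blue boundary is a concatenation of geodesics to $A$ avoiding the base, but the sketch as written leaves these dangling.
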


Again, the bijectivity can be checked by exhibiting the reverse
bijection. The most subtle point, already discussed
in~\cite{irredmaps}, is to check that this reverse bijection preserves
$2b$-irreducibility: again we use the fact that we are gluing slices
along geodesics, but we must also observe that, when ``recreating''
the base edge $BC$, we connect two vertices at graph distance at least
$2b+1$, so we cannot create a non-facial cycle of length $2b$. This
property would not be ensured if we applied decomposition (II) to a
$2p$-slice with $p<b$, and in particular to a $0$-slice. In
retrospect, this justifies why we need to introduce $2p$-slices for $p>0$.

Having decomposed the $2b$-slice $\boldsymbol{\sigma}$ as above, two
situations may occur:
\begin{itemize}
\item we only obtain trivial and empty slices: the recursive
  decomposition terminates here,
\item we obtain at least one $0$-slice: we apply to it the case (I) of
  the decomposition, or possibly the special case (III) if $b=1$ and
  the $0$-slice is a $2$-angle.
\end{itemize}

\paragraph{Recursive decomposition of a $\boldsymbol{2b}$-irreducible $\boldsymbol{2p}$-slice, case (III): the $\boldsymbol{2b}$-angle slice.}
We finally treat the special case where $p=b-1$ and the outer face is
of degree $2b$.  Since the contour of the outer face is a cycle, by
irreducibility it is also the contour of an \emph{inner} face $f$ of
degree $2b$.  The whole slice then consists of a single cycle of
length $2b$.  Note that, with $p=b-1$, the red boundary is reduced to
the vertex $C=A$, and the blue boundary comprises all edges apart from
the base.  We call this slice the \emph{$2b$-angle slice}.  Such slice
will be an atom in our recursive decomposition, which terminates here.

\medskip Altogether, combining steps (I), (II) and (III) decomposes
any $2b$-irreducible $2p$-slice with $0\leq p \leq b$ into pieces
which are either the trivial slice, the empty slice, or the $2b$-angle
slice (one may check that the recursion always terminates by induction
on the number of inner faces).  Figure \ref{fig:decomposition} shows
an example of full decomposition of a $0$-slice in the case $b=2$.

\begin{figure}[p]
  \centering
  \includegraphics[width=\textwidth]{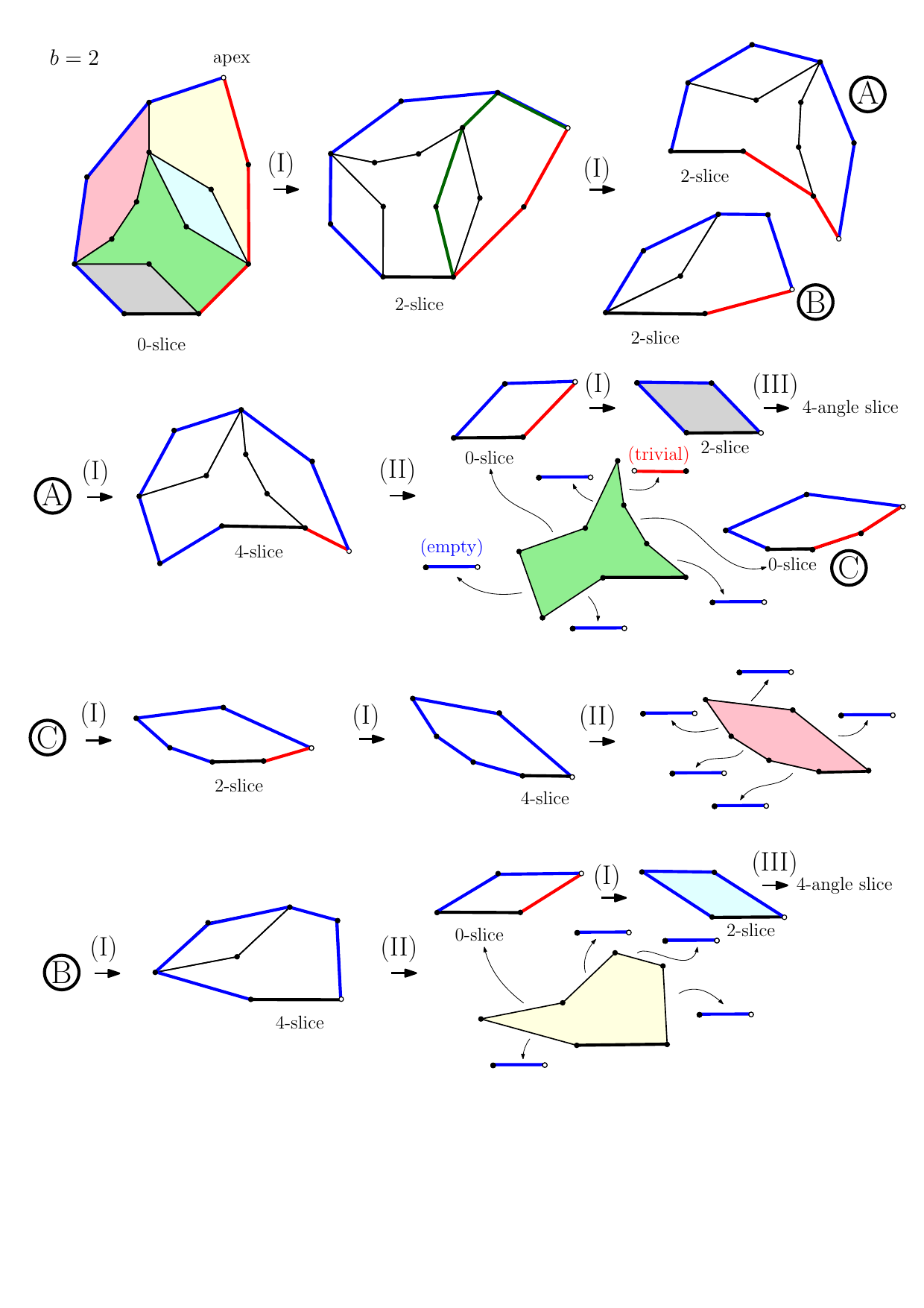}
  \caption{An example of decomposition of a $2b$-irreducible $0$-slice with $5$ faces, in the case $b=2$.
  In the second step (I) (upper line), the green geodesic cuts the $2$-slice in two sub-slices denoted by \textcircled{\raise-1pt\hbox{A}} and \textcircled{\raise-1pt\hbox{B}}. In the end, we are left with $1$ occurrence of the trivial slice, $13$ occurrences of the empty slice, and $2$ occurrences of the $2b$-angle slice.}
  \label{fig:decomposition}
\end{figure}

\subsection{Decorated tree formulation}
\label{sec:treeformulation}
Assume $b>0$ and $0 \leq p \leq b$ and consider a $2b$-irreducible $2p$-slice
$\boldsymbol{\sigma}$ to which we apply the above recursive
decomposition.  Following \cite{irredmaps,irredsuite}, it is useful to
encode this decomposition in the form of a tree, which we call
\emph{$b$-decorated tree} or \emph{decorated tree} for short, and we
will denote by $T(\boldsymbol{\sigma})$.  It turns out that
$T(\boldsymbol{\sigma})$ can be naturally drawn on the \emph{derived
  map} $\Delta(\boldsymbol{\sigma})$.

Recall from~\cite{Schaeffer15} that the derived map $\Delta(M)$ of a
map $M$ is the quadrangulation obtained by superimposing
$M$---hereafter called the primal map---with its dual map. The derived
map has three types of vertices, namely \emph{primal vertices},
\emph{dual vertices} and \emph{edge-vertices}, which are respectively
in bijection with the vertices, faces and edges of the primal map.
Precisely, if the primal map has two vertices $U$ and $V$ connected by
the edge $UV$ which has face $f$ on its left and face $g$ on its
right, then the derived map has an edge-vertex $\Delta(UV)$
corresponding to the edge $UV$, which is of degree $4$ and connected
(in clockwise order) to the vertices
$\Delta(U), \Delta(f), \Delta(V), \Delta(g)$.  Each edge of the
derived map connects an edge-vertex to either a primal or a dual vertex,
and hence corresponds to either a \emph{primal half-edge} or a
\emph{dual half-edge} accordingly.

In addition to being drawn on the derived map, the tree
$T(\boldsymbol{\sigma})$ carries some extra data, which we represent
in the form of \emph{decorations} as follows.
\begin{itemize}
\item Each primal half-edge belonging to $T(\boldsymbol{\sigma})$ and
  incident to a primal vertex of degree at least two in
  $T(\boldsymbol{\sigma})$ carries a number, ranging between $1$ and
  $b$, of \emph{arrows} pointing from the primal vertex to the
  edge-vertex.
\item Each dual vertex in $T(\boldsymbol{\sigma})$ may be incident, in
  addition to regular dual half-edges (which carry no arrow), to
  dangling half-edges which we call \emph{leaflets}.
\end{itemize}
The reader is invited to have a first look at
Figure~\ref{fig:decorated-tree-start}, which features these
decorations in the case $b=2$.

\bigskip Let us now explain how to construct $T(\boldsymbol{\sigma})$. In a
nutshell, we perform the recursive decomposition described in the
previous subsection, and build progressively the tree at each step,
according to specific rules described below.
We start with the following useful definition:
\begin{Def}
  Given a $2p$-slice $\boldsymbol{\sigma}$, with apex $A$, base $BC$, and outer face $f_0$,
  we define the \emph{tree vertex set} $S(\boldsymbol{\sigma})$
  as the set of all vertices of the derived map
  $\Delta(\boldsymbol{\sigma})$ deprived of $\Delta(f_0)$ and from the
  primal and edge-vertices corresponding to the vertices and edges of
  the blue boundary.
  In particular, neither $\Delta(A)$ nor $\Delta(B)$
  belong to $S(\boldsymbol{\sigma})$, but $\Delta(BC)$ and $\Delta(f)$
  do, where $f$ is the inner face incident to $BC$.
  We extend this definition by setting $S(\boldsymbol{\sigma})=\{\Delta(BC),\Delta(C)\}$ for $\boldsymbol{\sigma}$ the trivial slice,
  and $S(\boldsymbol{\sigma})=\varnothing$ for $\boldsymbol{\sigma}$ the empty slice.
\end{Def}
This allows us to state an ``invariant'' of the
recursion, which will be verified inductively:
\begin{prop}
  \label{prop:treeinvariant}
  Given a $2b$-irreducible $2p$-slice $\boldsymbol{\sigma}$ with
  $0 \leq p \leq b$, $T(\boldsymbol{\sigma})$ is a tree drawn on the
  derived map $\Delta(\boldsymbol{\sigma})$, having vertex set
  $S(\boldsymbol{\sigma})$. Every edge-vertex in
  $S(\boldsymbol{\sigma})$ has degree two in $T(\boldsymbol{\sigma})$,
  except $\Delta(BC)$ which has degree one, and which we choose as the root.
  For $p=b$, $\Delta(BC)$ is connected to $\Delta(f)$.
  For $p \leq b-1$, $\Delta(BC)$ is connected to $\Delta(C)$ by a primal half-edge
  carrying $b-p$ arrows, unless $\boldsymbol{\sigma}$ is equal to the $2b$-angle slice,
  in which case $\Delta(BC)$ is connected to $\Delta(f)$.
\end{prop}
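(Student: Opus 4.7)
The plan is to prove Proposition~\ref{prop:treeinvariant} by induction on the recursive decomposition of Section~\ref{sec:quasislices}, using (say) the number of inner faces of $\boldsymbol{\sigma}$ as the induction measure; as noted at the end of that subsection, every branch of the recursion strictly decreases this quantity, except for the transition from a $2p$-slice to a $2(p+1)$-slice in case (I) with $q=1$, for which we may instead use $b-p$ as a secondary induction parameter. The three atomic objects (trivial slice, empty slice, $2b$-angle slice) are the base cases: in each one, $S(\boldsymbol{\sigma})$ is completely explicit and small enough that the invariant can be read directly off the definition of $T(\boldsymbol{\sigma})$ on these atoms, and the statement about $\Delta(BC)$ being the root and its prescribed neighbour is immediate.

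For the inductive step in case (I), I would start from the decomposition of Proposition~\ref{prop:decompI}, producing sub-slices $\boldsymbol{\sigma}_1,\ldots,\boldsymbol{\sigma}_q$ along the paths $P_0,P_1,\ldots,P_q$. Since each $\boldsymbol{\sigma}_j$ is a $2p_j$-slice with $p_j\geq 1$ and is either strictly smaller or has strictly larger slice index, the induction hypothesis applies to each. The key geometric point is that the derived map $\Delta(\boldsymbol{\sigma})$ is obtained by gluing the $\Delta(\boldsymbol{\sigma}_j)$ along their red/blue boundaries: an edge-vertex $\Delta(CD_j)$, which is the root of $T(\boldsymbol{\sigma}_j)$ and hence of degree one there, lies on the interface between $\boldsymbol{\sigma}_j$ and $\boldsymbol{\sigma}_{j+1}$ (being the first edge of $P_j$, cut open in $\boldsymbol{\sigma}_{j+1}$'s blue boundary) and acquires one additional incident half-edge in the combined tree, yielding the required degree two. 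The new root $\Delta(BC)$ only gains the fresh primal half-edge to $\Delta(C)$ carrying $b-p$ arrows, and a careful bookkeeping shows $S(\boldsymbol{\sigma}) = \{\Delta(BC)\}\cup \bigcup_j S(\boldsymbol{\sigma}_j)$ after the appropriate boundary identifications, so the result remains a tree.

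For the inductive step in case (II), I would use Proposition~\ref{prop:decompIII} to split $\boldsymbol{\sigma}$ along the leftmost geodesics from the corners of the face $f$ incident to $BC$ into $2m-1$ elementary slices $\boldsymbol{\sigma}_1,\ldots,\boldsymbol{\sigma}_{2m-1}$. The trivial ones (there are $m-b-1$ of them) contribute the leaflets at $\Delta(f)$; the empty ones contribute nothing; and each $0$-slice contributes by induction a subtree rooted at its $\Delta(V_{j-1}V_j)$, which becomes an internal degree-two edge-vertex after gluing (its other incident half-edge being a dual half-edge toward $\Delta(f)$). The declaration is then to join $\Delta(BC)$ to $\Delta(f)$ by a single dual half-edge, matching the prescribed structure. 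Connectivity and acyclicity of $T(\boldsymbol{\sigma})$ follow from the corresponding property of each $T(\boldsymbol{\sigma}_j)$ and the fact that $\Delta(f)$ is used exactly once as a shared endpoint.

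The main obstacle, I expect, is bookkeeping the identifications in the gluing: verifying that every vertex of $\Delta(\boldsymbol{\sigma})$ belonging to $S(\boldsymbol{\sigma})$ appears in exactly one $S(\boldsymbol{\sigma}_j)$ or is the newly-introduced $\Delta(BC)$ (respectively $\Delta(f)$), and that the degrees come out correctly at the glued edge-vertices and at the shared primal vertices lying on red boundaries of the sub-slices. Once this matching of vertex sets and degrees is checked, the ``tree'' property is automatic by induction, since each step glues trees along single vertices, and the boundary condition on the root is read directly from the construction rules specified just before the proposition.
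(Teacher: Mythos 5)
Your inductive scheme is precisely the one the paper follows (the paper's proof is distributed across the initialisation paragraph and the two recursive-construction paragraphs, each ending with the same verification you describe), so the approach is sound. However, your vertex-set bookkeeping in case~(I) is genuinely off, and fixing it is exactly the content of the ``key observation'' in the paper. You write $S(\boldsymbol{\sigma}) = \{\Delta(BC)\}\cup \bigcup_j S(\boldsymbol{\sigma}_j)$, but $\Delta(C)$ belongs to none of these sets: for each $\boldsymbol{\sigma}_j$ the vertex $C$ is the endpoint $B$ of its base $CD_j$ and therefore lies on the blue boundary of $\boldsymbol{\sigma}_j$, so $\Delta(C)$ is excluded from $S(\boldsymbol{\sigma}_j)$ by definition. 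The correct decomposition is the disjoint union $S(\boldsymbol{\sigma})=\{\Delta(BC)\}\sqcup\{\Delta(C)\}\sqcup S(\boldsymbol{\sigma}_1)\sqcup\cdots\sqcup S(\boldsymbol{\sigma}_q)$; in $T(\boldsymbol{\sigma})$ the primal vertex $\Delta(C)$ is then a \emph{fresh} vertex of degree $q+1\geq 2$ supporting the new primal half-edges and their $(b-p)+p_1+\cdots+p_q=b+1$ arrows. If $\Delta(C)$ were already in some $S(\boldsymbol{\sigma}_j)$, as your formula implies, adding the $q+1$ half-edges incident to it would either give it the wrong degree or create a cycle, so this is not a cosmetic omission.

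Two smaller slips: in case~(II) the new vertices are \emph{both} $\Delta(BC)$ and $\Delta(f)$ (the edge $BC$ is the base of $\boldsymbol{\sigma}$ and is not an edge of any $\boldsymbol{\sigma}_j$), not just $\Delta(f)$ as your final paragraph suggests; and the edge $CD_j$ is the first edge of $P_{j-1}$ rather than $P_j$, hence lies between $\boldsymbol{\sigma}_{j-1}$ and $\boldsymbol{\sigma}_j$ — this mis-indexing does not affect your argument but is worth getting right.
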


\begin{figure}
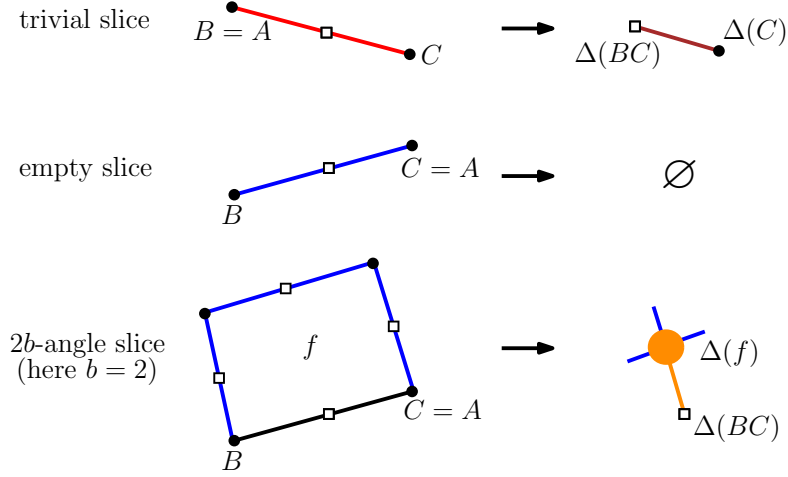

  \centering
  \fig{.7}{tree-construction-init}
  \caption{Initialisation of the recursive construction of $T(\boldsymbol{\sigma})$.
    Primal vertices and edge-vertices are represented as black dots
    and white squares, respectively. On the right, leaflets are shown
    in blue.}
  \label{fig:tree-construction-init}
\end{figure}

We now give the precise construction rules.
See Figure~\ref{fig:decorated-tree-start} for a full application of our construction.
\paragraph{Recursive construction of the decorated tree, initialisation.}
We define $T(\boldsymbol{\sigma})$ when
$\boldsymbol{\sigma}$ is an ``atom'' of our recursive decomposition, namely when it is equal
either to the trivial slice, to the empty slice, or to the $2b$-angle slice.
For the trivial slice, $T(\boldsymbol{\sigma})$ consists of
a single primal half-edge connecting $\Delta(BC)$ to $\Delta(C)$,
carrying no arrow. This tree is called the \emph{trivial tree}.
For the empty slice, $T(\boldsymbol{\sigma})$ is
defined as the empty graph containing no vertex.
For the $2b$-angle slice, corresponding to the case (III) discussed in the previous subsection,
$T(\boldsymbol{\sigma})$ consists of a single dual half-edge
connecting $\Delta(BC)$ to $\Delta(f)$ (with $f$ the inner face),
and $2b-1$ leaflets incident to $\Delta(f)$. See
Figure~\ref{fig:tree-construction-init} for an illustration. Note that
all these conventional definitions are consistent with the property
that $T(\boldsymbol{\sigma})$ has vertex set $S(\boldsymbol{\sigma})$.
We now turn to the recursive part of the
construction, for which we have to distinguish the cases (I) and (II)
discussed in the previous subsection.

\paragraph{Recursive construction of the decorated tree, case  (I): $\boldsymbol{0\leq p\leq b-1}$.}
\begin{figure}[t]
  \centering \includegraphics[width=\textwidth]{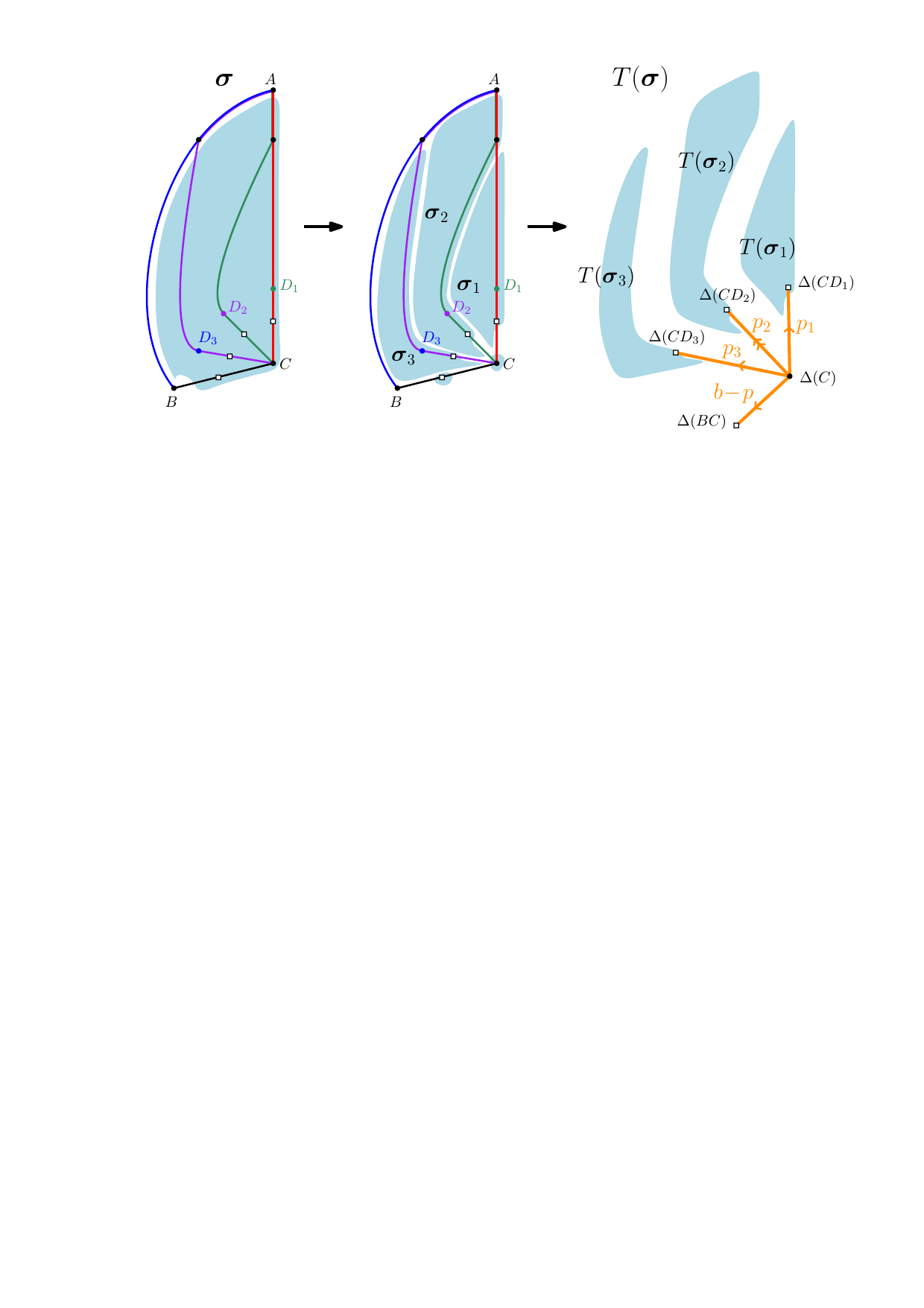}
  \caption{Tree formulation of the case (I) decomposition of
    Figure~\ref{fig:step_II_primal}. Primal vertices and edge-vertices
    are represented as black dots and white squares, respectively.
    Represented in light blue, the tree vertex set $S(\boldsymbol{\sigma})$ (left),
    and its decomposition as the disjoint union of the sets
    $\{\Delta(BC)\},\{\Delta(C)\},S(\boldsymbol{\sigma}_1),\ldots,S(\boldsymbol{\sigma}_q)$ (middle).
    On the right, the tree $T(\boldsymbol{\sigma})$ constructed in the text.
  }
  \label{fig:tree-construction-I}
\end{figure}
Suppose that we are in case (I) of the recursive decomposition. As
summarized in Proposition~\ref{prop:decompI}, $\boldsymbol{\sigma}$ is
then decomposed into a sequence
$(\boldsymbol{\sigma}_1,\ldots,\boldsymbol{\sigma}_q)$ where $q$ is a
positive integer and, for any $j=1,\ldots,q$, $\boldsymbol{\sigma}_j$
is a $2b$-irreducible $2p_j$-slice for some $p_j\geq 1$, with
$p_1+\cdots+p_q=p+1$. Then, $T(\boldsymbol{\sigma})$ consists of the
following elements (see Figure~\ref{fig:tree-construction-I} for an
illustration):
\begin{itemize}
\item the primal half-edge connecting $\Delta(BC)$ to $\Delta(C)$, on
  which we place $b-p$ arrows,
\item for each $j=1,\ldots,q$, the primal half-edge connecting
  $\Delta(C)$ to $\Delta(CD_j)$ (with $CD_j$ the base of
  $\boldsymbol{\sigma}_j$), on which we place $p_i$ arrows,
\item and the trees
  $T(\boldsymbol{\sigma}_1),\ldots,T(\boldsymbol{\sigma}_q)$ which we
  proceed to construct recursively.
\end{itemize}
Assuming that Proposition~\ref{prop:treeinvariant} holds for
$\boldsymbol{\sigma}_1,\ldots,\boldsymbol{\sigma}_q$, we may verify
that it also holds for $\boldsymbol{\sigma}$, by making the key
observation that the tree vertex set $S(\boldsymbol{\sigma})$ is the
disjoint union of the sets
$\{\Delta(BC)\},\{\Delta(C)\},S(\boldsymbol{\sigma}_1),\ldots,S(\boldsymbol{\sigma}_q)$. We
also observe that, in $T(\boldsymbol{\sigma})$, the primal vertex
$\Delta(C)$ has degree $q+1 \geq 2$, and that the total number of
arrows on its incident primal half-edges is equal to $(b-p) + p_1 + \cdots + p_q = b+1$.

\paragraph{Recursive construction of the decorated tree, case (II): $\boldsymbol{p= b}$.}
\begin{figure}[t]
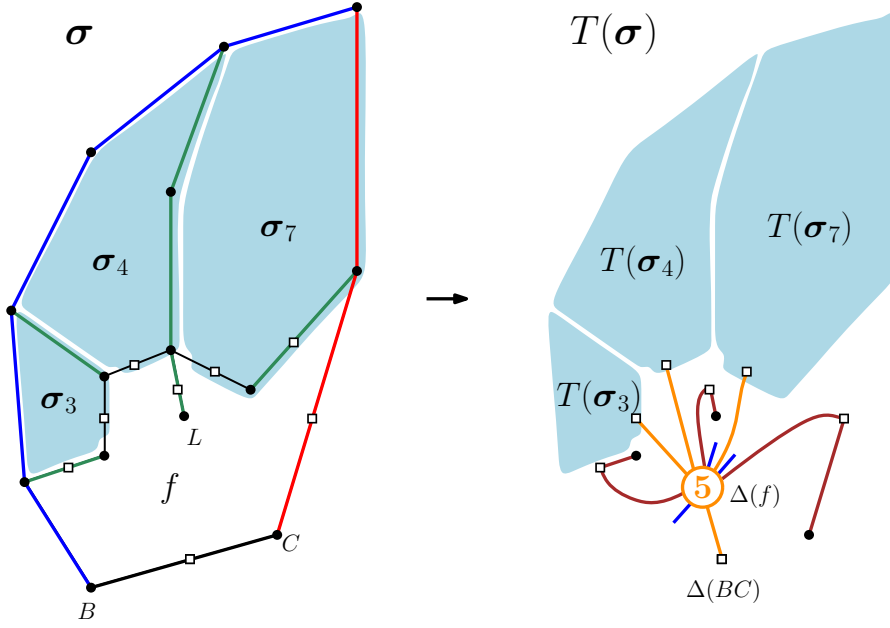

  \centering
  \fig{.8}{tree-construction-II}
  \caption{Tree formulation of the case (II) decomposition of
    Figure~\ref{fig:step_III_dual} (for $b=1$). In
    $T(\boldsymbol{\sigma})$, the empty slices
    $\boldsymbol{\sigma}_1,\boldsymbol{\sigma}_6,\boldsymbol{\sigma}_8$
    give rise to leaflets (shown in blue) while the trivial slices
    $\boldsymbol{\sigma}_2,\boldsymbol{\sigma}_5,\boldsymbol{\sigma}_9$
    give rise to twigs (shown in brown). For convenience we display on
    the dual vertex $\Delta(f)$ its half-degree, here equal to
    $m=5$. Note that the vertex $L$ of degree $1$ incident to $f$ in
    $\boldsymbol{\sigma}$ gives rise in $T(\boldsymbol{\sigma})$ to a
    twig followed by a leaflet, when going clockwise around
    $\Delta(f)$. Such pattern is forbidden in tight maps.}
  \label{fig:tree-construction-II}
\end{figure}
Suppose now that we are in case (II) of the recursive decomposition
and let $m$ be the half-degree of the inner face $f$. As summarized in
Proposition~\ref{prop:decompIII}, $\boldsymbol{\sigma}$ is then
decomposed into a tuple
$(\boldsymbol{\sigma}_1,\ldots,\boldsymbol{\sigma}_{2m-1})$ of
$2b$-irreducible elementary slices, exactly $m-b-1$ of which are
trivial. Then, $T(\boldsymbol{\sigma})$ consists of the following
elements (see Figure~\ref{fig:tree-construction-II} for an
illustration):
\begin{itemize}
\item the dual half-edge connecting $\Delta(BC)$ to $\Delta(f)$,
\item for each $j=1,\ldots,2m-1$, the dual half-edge connecting
  $\Delta(f)$ to the edge-vertex corresponding to the base of
  $\boldsymbol{\sigma}_j$, unless the latter is equal to the empty
  slice, in which case we replace the dual half-edge by a leaflet
  attached to $\Delta(f)$,
\item and the trees
  $T(\boldsymbol{\sigma}_1),\ldots,T(\boldsymbol{\sigma}_{2m-1})$ which we
  proceed to construct recursively.
\end{itemize}
Assuming that Proposition~\ref{prop:treeinvariant} holds for the
$\boldsymbol{\sigma}_j$ which are neither trivial nor empty, we may
verify that it also holds for $\boldsymbol{\sigma}$, by making the key
observation that the tree vertex set $S(\boldsymbol{\sigma})$ is the
disjoint union of the sets
$\{\Delta(BC),\Delta(f)\},S(\boldsymbol{\sigma}_1),\ldots,S(\boldsymbol{\sigma}_{2m-1})$. We also observe that, in
$T(\boldsymbol{\sigma})$, the dual vertex $\Delta(f)$ has degree $2m$,
accounting for the contribution of leaflets. It is incident to exactly
$m-b-1$ dual half-edges leading to an instance of the trivial tree (namely, the tree
corresponding to the trivial slice, see again
Figure~\ref{fig:tree-construction-init}). We call \emph{twig} the combination of such a
dual half-edge and its attached trivial tree,
so that $\Delta(f)$ is attached to exactly $m-b-1$ twigs. The
remaining contribution $m+b+1$ to the degree of $\Delta(f)$ comes from
leaflets, dual half-edges leading to non-trivial trees, and the
root dual half-edge coming from $\Delta(BC)$.

\begin{figure}[h]
  \centering
  \includegraphics[scale=1.2]{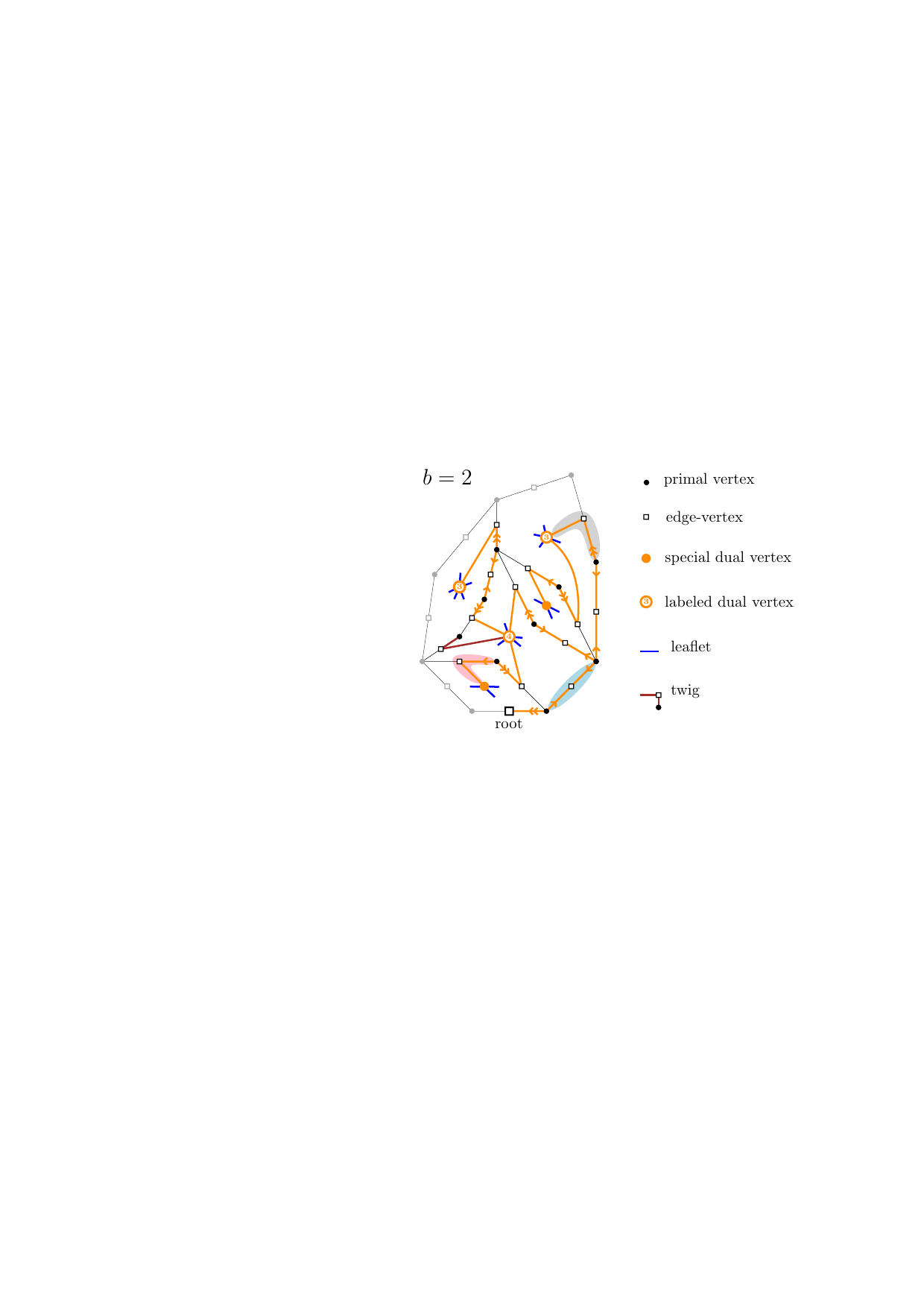}
  \caption{The decorated tree representation of the decomposition in Figure \ref{fig:decomposition}.
  Here we draw the tree superimposed on the slice.
  We indicated in light blue a bioriented edge-vertex,
  in gray a regular bent edge-vertex,
  and in pink a special bent edge-vertex.
  The primal and edge-vertices corresponding to the blue boundary of the slice
  are shown in grey, and do not belong to the tree vertex set.}
  \label{fig:decorated-tree-start}
\end{figure}

\paragraph{Characterization of $b$-decorated trees.}
Let us now give an intrinsic characterization of the trees that we obtain.
A $b$-decorated tree is a plane tree satisfying the following properties.
\begin{itemize}
  \item It is made of three types of vertices: primal, dual, and edge-vertices,
  connected by either \emph{primal half-edges} connecting a primal vertex to an edge-vertex,
  or \emph{dual half-edges} connecting a dual vertex to an edge-vertex.
  \item It carries two types of decorations:
    \begin{itemize}
      \item \emph{arrows}, in number between $1$ and $b$, placed on all primal half-edges incident to a primal vertex of degree at least two,
        and pointing away from that vertex;
      \item \emph{leaflets}, incident to dual vertices, that contribute to their degrees.
    \end{itemize}
  \item Around each primal vertex of degree at least two, the total number of arrows is equal to $b+1$.
  \item The tree is planted on an edge-vertex of degree one, hereafter called the root.
    All the edge-vertices different from the root have degree two.
  \item An edge-vertex incident to two primal half-edges is called a \emph{bioriented edge}:
    it must have both its incident half-edges carrying arrows, with $b$ arrows in total,
    and may therefore exist only when $b\geq 2$.
  \item An edge-vertex incident to two dual half-edges is called a \emph{dual/dual edge-vertex}:
    it may exist only when $b=1$, and must have exactly one of its adjacent dual vertices of degree $2b = 2$ (see Figure~\ref{fig:decomp-b-eq-1}).
  \item An edge-vertex incident to one primal half-edge and one dual half-edge is called a \emph{bent edge-vertex},
    which must be of one of the following types:
    \begin{itemize}
      \item a \emph{twig-vertex}: the primal half-edge carries no arrow, hence leads to a primal vertex of degree one.
        The ensemble made of the twig-vertex, its incident half-edges, and the adjacent primal vertex, form a \emph{twig};
      \item a \emph{special bent edge-vertex}: the primal half-edge carries $b-1$ arrows, and the adjacent dual vertex has degree $2b$;
      \item a \emph{regular bent edge-vertex}: the primal half-edge carries $b$ arrows, and the adjacent dual vertex has degree at least $2b+2$.
    \end{itemize}
  \item Each dual vertex of degree $2b$, hereafter called \emph{special dual vertex}, is incident to exactly one dual half-edge and $2b-1$ leaflets.
  \item Every other dual vertex has an even degree larger than $2b$, and is hereafter called \emph{labeled dual vertex}.
    A labeled dual vertex of degree $2m$ has label $m \geq b+1$, and is adjacent to exactly $m-b-1$ twig-vertices.
\end{itemize}
See again Figure~\ref{fig:decorated-tree-start} for an example of a $b$-decorated tree in the case $b=2$.
With this characterization at hand, we may state the following:
\begin{prop}
  For $b \geq 1$ and $p = 0, \ldots, b$, the mapping $\boldsymbol{\sigma} \mapsto T(\boldsymbol{\sigma})$ is a bijection
  between the set of $2b$-irreducible $2p$-slices different from the $2b$-angle slice, and the set of $b$-decorated trees such that
  the root edge-vertex is incident to a primal half-edge carrying $b-p$ arrows when $p \leq b-1$,
  or to a dual half-edge leading to a labeled dual vertex when $p=b$. 
  For each $m$, the number of inner faces of degree $2m$ in $\boldsymbol{\sigma}$ is equal to the number of dual vertices of degree $2m$ in $T(\boldsymbol{\sigma})$.
\end{prop}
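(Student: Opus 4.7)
The plan is to prove the bijection by induction on the recursive decomposition, verifying separately that the forward map $\boldsymbol{\sigma}\mapsto T(\boldsymbol{\sigma})$ lands in the claimed set of $b$-decorated trees, and that the construction is reversible through a well-defined closing procedure.

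\textbf{Forward direction.} I would induct on the number of inner faces of $\boldsymbol{\sigma}$, using the three atomic cases (trivial, empty, $2b$-angle slice) as base and cases (I) and~(II) as inductive steps. The base cases match the definitions in Figure~\ref{fig:tree-construction-init}. For the inductive step in case (I), the primal vertex $\Delta(C)$ acquires degree $q+1\geq 2$ and, as already noted in the text, the total arrow count on its incident primal half-edges is $(b-p)+p_1+\cdots+p_q=b+1$, matching the arrow balance condition. The root half-edge of $T(\boldsymbol{\sigma})$ carries $b-p$ arrows as required. For each $j$, the edge-vertex $\Delta(CD_j)$ becomes interior to $T(\boldsymbol{\sigma})$: if the root of $T(\boldsymbol{\sigma}_j)$ is a primal half-edge carrying $b-p_j$ arrows, then $\Delta(CD_j)$ is bioriented with arrow total $p_j+(b-p_j)=b$; if instead $\boldsymbol{\sigma}_j$ is a $2b$-slice ($p_j=b$, case (II)) the root is a dual half-edge leading to a labeled dual vertex, making $\Delta(CD_j)$ a regular bent edge-vertex; and if $\boldsymbol{\sigma}_j$ is the $2b$-angle slice ($p_j=b-1$ plus the exceptional degree condition), the root is a dual half-edge leading to a special dual vertex of degree $2b$, making $\Delta(CD_j)$ a special bent edge-vertex. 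For case (II), the dual vertex $\Delta(f)$ acquires degree $2m$ (counting leaflets) and exactly $m-b-1$ of its incident dual half-edges lead to the trivial tree, i.e.\ are twigs, as the text already observes. The edge-vertices arising here are either twig-vertices (when $\boldsymbol{\sigma}_j$ is trivial), bent or bioriented vertices depending on the type of $\boldsymbol{\sigma}_j$ as above, or are replaced by leaflets when $\boldsymbol{\sigma}_j$ is empty.

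\textbf{Reverse direction.} I would define the inverse $T\mapsto\boldsymbol{\sigma}(T)$ by reading the root and applying the inverse of the corresponding decomposition step. If the root primal half-edge carries $b-p$ arrows with $p\leq b-1$, the incident primal vertex $\Delta(C)$ has some number $q+1$ of incident primal half-edges whose arrows (apart from the root) partition $b+1-(b-p)=p+1$ into positive integers $p_1,\ldots,p_q$; the $q$ subtrees rooted at the neighbouring edge-vertices are, by induction, trees associated with $2b$-irreducible $2p_j$-slices, and we glue them via the inverse of Proposition~\ref{prop:decompI}. If the root dual half-edge leads to a labeled dual vertex $\Delta(f)$ of half-degree $m$, we interpret its $2m-1$ other incident half-edges and leaflets as determining the tuple $(\boldsymbol{\sigma}_1,\ldots,\boldsymbol{\sigma}_{2m-1})$ (with leaflets becoming empty slices, twigs becoming trivial slices, and subtrees reconstructed inductively), then invert Proposition~\ref{prop:decompIII}. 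Termination is ensured because each reverse step strictly decreases either the number of inner dual vertices or converts a $2p$-slice into $2p'$-slices with strictly smaller $p'$ at each recursion.

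\textbf{Welldefinedness and irreducibility of the reconstruction.} The main obstacle is to verify that the reconstructed $\boldsymbol{\sigma}(T)$ is a genuine $2b$-irreducible slice, i.e.\ that the gluings preserve the geodesic properties of the blue/red boundaries and that no short non-facial cycle is created. Here I would invoke the already-cited arguments from \cite{irredmaps}: in case~(I) the gluing takes place along geodesics, so distances are preserved and no new short cycles appear; in case~(II) the key observation is that when the base edge $BC$ is recreated, the endpoints are at graph distance at least $2b+1$ in the rest of the slice, which rules out non-facial cycles of length $2b$. These checks are exactly what Appendix~\ref{app:closing-tree} spells out; mutual invertibility with the forward construction then follows from the fact that both are driven by the same recursion. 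Finally, the face-counting statement is immediate: the forward map introduces one dual vertex of degree $2m$ for each inner face of degree $2m$ encountered in case~(II), and no other dual vertices than the special ones (of degree $2b$) produced by $2b$-angle slices, which are in bijection with the inner faces of degree $2b$ of $\boldsymbol{\sigma}$.
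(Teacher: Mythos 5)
Your proof follows essentially the same route as the paper, which only offers a one-sentence sketch ("check that the $b$-decorated trees have a recursive decomposition which is equivalent to that of $2b$-irreducible slices") and defers the explicit inverse to Appendix~\ref{app:closing-tree}. You flesh out this sketch correctly: the forward induction checks that each construction step produces a local structure conforming to the $b$-decorated tree characterization (degree and arrow balance at $\Delta(C)$, the three types of edge-vertices $\Delta(CD_j)$ depending on whether $\boldsymbol{\sigma}_j$ is a generic $2p_j$-slice, a $2b$-slice, or the $2b$-angle slice), and the reverse direction correctly identifies that inverting Propositions~\ref{prop:decompI} and~\ref{prop:decompIII}, plus the gluing-along-geodesics argument from~\cite{irredmaps}, is what is needed for well-definedness.

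One small imprecision in your termination argument for the reverse direction: when a case-(I) step has $q=1$, the single subtree corresponds to a $2p_1$-slice with $p_1 = p+1 > p$, so the excess strictly \emph{increases} (equivalently, $b-p$ decreases), not decreases as you wrote. The clean measure is the pair (number of tree vertices) or (number of dual vertices, $b-p$) in lexicographic order, both of which strictly decrease; or more simply, the total number of vertices of the subtree is strictly smaller. This is harmless but worth stating correctly. Also, the face-counting claim is most cleanly seen as a direct consequence of Proposition~\ref{prop:treeinvariant} (the vertex set of $T(\boldsymbol{\sigma})$ is exactly $S(\boldsymbol{\sigma})$, which contains $\Delta(f)$ for every inner face $f$, each of degree matching $f$), rather than tracking which recursion step creates which dual vertex, though your hands-on accounting is also correct.
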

This proposition may be proved by checking that the $b$-decorated trees have a recursive decomposition which is equivalent to that of $2b$-irreducible slices.
For completeness, we also give a self-contained description of the inverse bijection in Appendix~\ref{app:closing-tree}.

\begin{rem}
  \label{rem:altrepr}
  For completeness, let us mention an alternate but equivalent way to
  represent $b$-decorated trees, which is strongly reminiscent of the
  $\Z$-mobiles considered in~\cite{BF12b}. We still have three types
  of vertices (primal, dual and edge-vertices), connected by primal
  and dual half-edges. We still plant the tree on an edge-vertex of
  degree one, and every other edge-vertex has degree two. We still
  have leaflets\footnote{Leaflets correspond to buds in the
    terminology of~\cite{BF12b}.} attached to the dual vertices and
  contributing to their degree (which must be an even integer larger
  than or equal to $2b$). But instead of placing arrows on some primal
  half-edges, we now assign an integer \emph{weight} to \emph{every}
  (primal or dual) half-edge, with the following rules:
  \begin{itemize}
  \item the weight of a primal half-edge is an integer between $1$ and $b+1$,
  \item the weight of a dual half-edge is either $-1$, $0$ or $+1$,
  \item defining the \emph{total weight} of a vertex as the sum of
    the weights of its incident half-edges (ignoring leaflets):
    \begin{itemize}
    \item each primal vertex has total weight $b+1$,
    \item each non-root edge-vertex has total weight $b$,
    \item for every $m \geq b$, each dual vertex of degree $2m$ has
      total weight $b+1-m$; if $m=b$ it is incident to exactly one
      dual half-edge of weight $+1$ and $2b-1$ leaflets; if
      $m \geq b+1$ it has no incident dual half-edge of weight $+1$
      (hence has exactly $m-b-1$ incident dual half-edges of weight
      $-1$).
    \end{itemize}
  \end{itemize}
  We recover the previous representation by placing $i$ arrows on each
  primal half-edge of weight $i$, for every $i$ between $1$ and
  $b$. Note that the primal half-edges of weight $b+1$ and the dual
  half-edges of weight $-1$ only appear within twigs. Let us observe
  finally that, in the absence of dual vertices of degree $2b$ (which
  implies that there are no dual half-edges of weight $+1$), we
  recover precisely the $(b+1)$-dibranching mobiles as defined
  in~\cite[Definition 8]{BF12b}.
\end{rem}

\subsection{Arrow trees and blossoming vertices.}
\label{sec:treecomponents}
From the discussion of the previous subsections, the problem of enumerating $2b$-irreducible maps can be achieved by first enumerating $2b$-irreducible slices,
which in turn amounts to enumerating $b$-decorated trees.
In order to do so, it is useful to further decompose these decorated trees into more elementary components as follows (see Figure \ref{fig:decorated-tree-decomp}).
\begin{figure}[h]
  \centering
  \includegraphics[width=\textwidth]{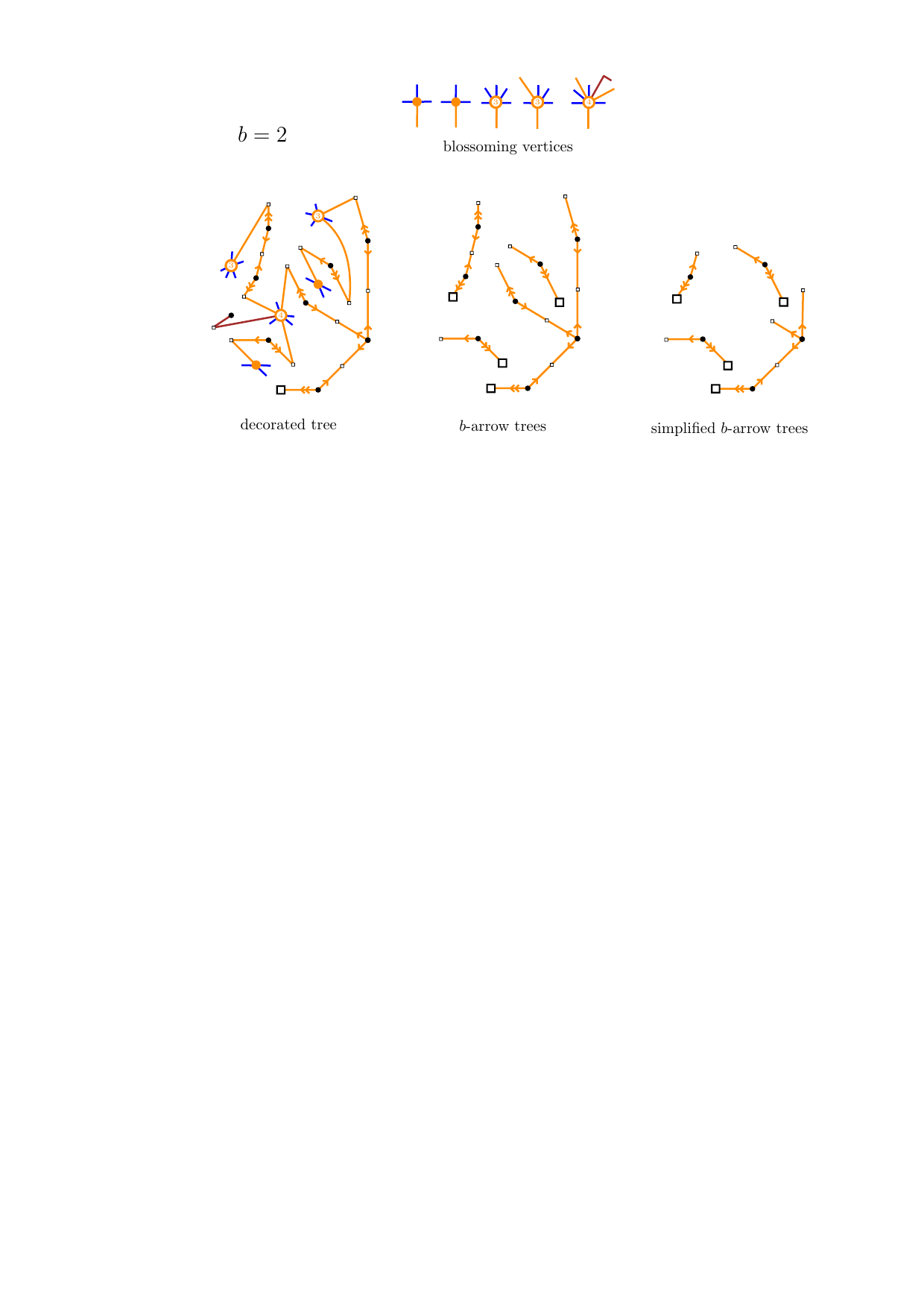}
  \caption{Decomposition of the decorated tree (left) of Figure~\ref{fig:decorated-tree-start}
    into its connected components made of $b$-arrow trees and blossoming vertices (middle).
    The $b$-arrow trees may be further reduced into simplified $b$-arrow trees (right).}
  \label{fig:decorated-tree-decomp}
\end{figure}

Let us assume first that $b>1$ (the case $b=1$ will be treated at the end of this subsection). 
\paragraph{$\boldsymbol{b}$-arrow trees.}
\begin{figure}[h]
  \centering
  \includegraphics[width=\textwidth]{./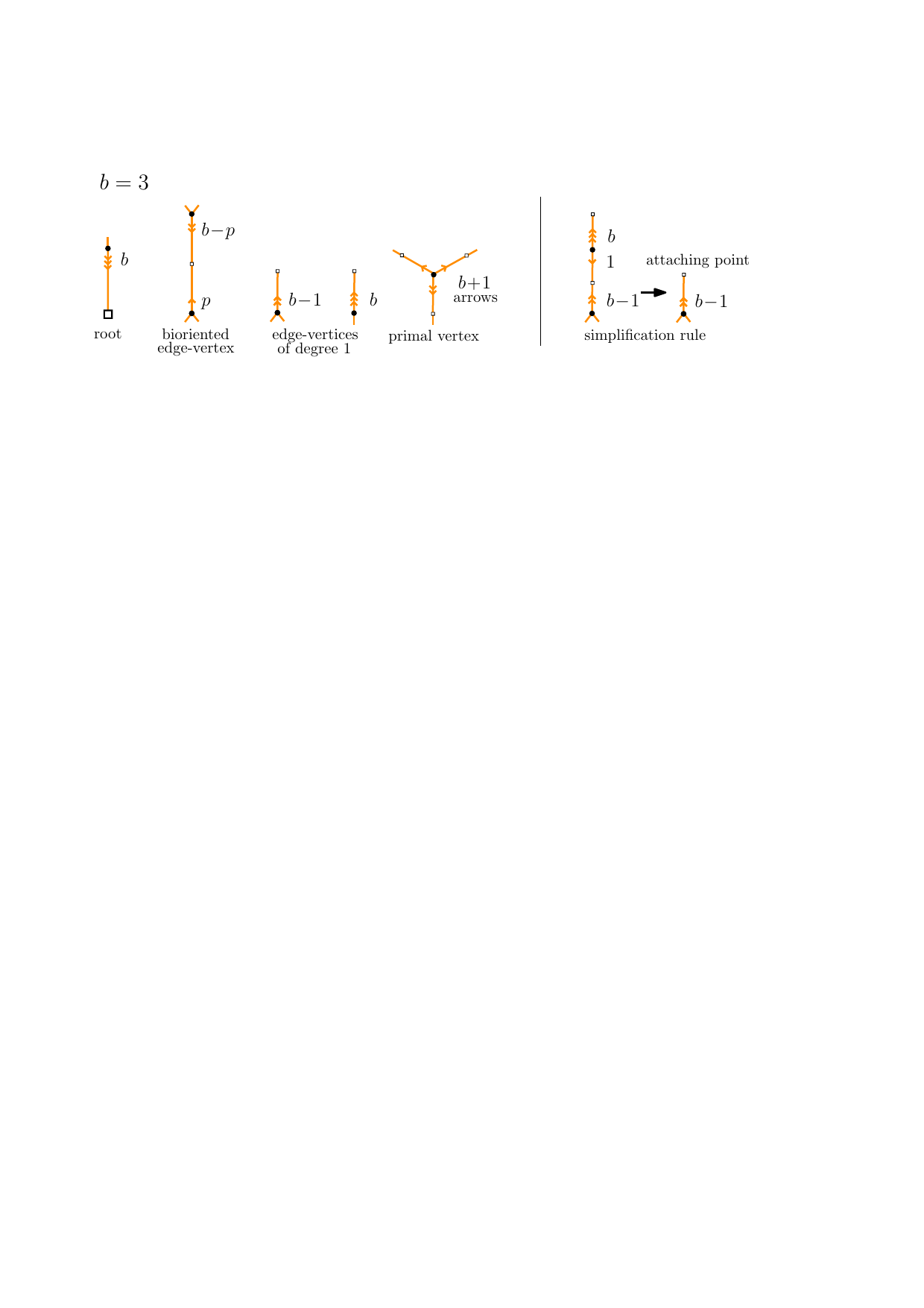}
  \caption{Environments of the various vertices in a $b$-arrow tree.
  For simplified $b$-arrow trees, non-root half-edges with $b$ arrows are removed as shown on the right.}
  \label{fig:arrowtreerules}
\end{figure}

Take a $b$-decorated tree corresponding to a $0$-slice, drawn on the derived map, and keep only the part of the tree drawn on the primal map, \emph{without the twigs}.
In other words, we keep only the primal half-edges which carry arrows and their incident vertices. This cuts the decorated
tree into a number of connected components, which are themselves plane trees built out of both bioriented edges and oriented half-edges 
(corresponding to the primal half-edges incident to a bent edge-vertex). We shall call \emph{$b$-arrow trees} these connected components. 
They are naturally planted by selecting as root the edge-vertex closest to the root of the $b$-decorated tree.
A $b$-arrow tree is then characterized as follows (see Figure \ref{fig:arrowtreerules})
\begin{itemize}
  \item[(i)] it is made of primal vertices of degree at least two and edge-vertices of degree one or two,
    connected by primal half-edges carrying between $1$ and $b$ arrows;
  \item[(ii)] around each primal vertex there is a total number of arrows equal to $(b+1)$;
  \item[(iii)] around each edge-vertex of degree two (still called bioriented edge-vertex) there is a total number of arrows equal to $b$;
  \item[(iv)] next to each edge-vertex of degree one there are either $b-1$ or $b$ arrows, the root vertex having $b$.
\end{itemize}

\paragraph{Simplified $\boldsymbol{b}$-arrow trees.}  
For the purposes of enumeration, it is useful to slightly simplify the above characterization thanks to the following remark.
Consider an edge-vertex of degree one with $b$ arrows which is \emph{not} the root of the $b$-arrow tree. 
To fulfill the condition (ii), its adjacent primal vertex necessarily has degree two and has one arrow on the other side.
Then it is in turn adjacent to a bioriented edge whose other half-edge carries $(b-1)$ arrows. 
We may thus at no cost remove this bivalent primal vertex and its two incident half-edges and keep only the
remnant half-edge with $(b-1)$ arrows (see Figure \ref{fig:arrowtreerules}-right and Figure \ref{fig:decorated-tree-decomp}-bottom right for an example).
Doing so for each half-edge with $b$ arrows different from the root of the $b$-arrow tree, 
we end up with a slightly simpler notion of what we hereafter call \emph{simplified $b$-arrow trees}, where point (iv) above is replaced by
\begin{itemize}
  \item[(iv')] next to each edge-vertex of degree one different from the root, there are $b-1$ arrows. These edge-vertices will be called \emph{attaching points}. Next to the root there are $b$ arrows.
\end{itemize} 
Note that each attaching point of a simplified $b$-arrow tree may be equally connected to a special or a labeled dual vertex in the $b$-decorated tree,
and that there is a unique way to make this connection:
this requires $2$ additional half-edges and vertices to undo the simplification in the case where the dual vertex is labeled.
\medskip 
\paragraph{Blossoming vertices.}
\begin{figure}[h]
  \centering
  \includegraphics[width=0.8\textwidth]{./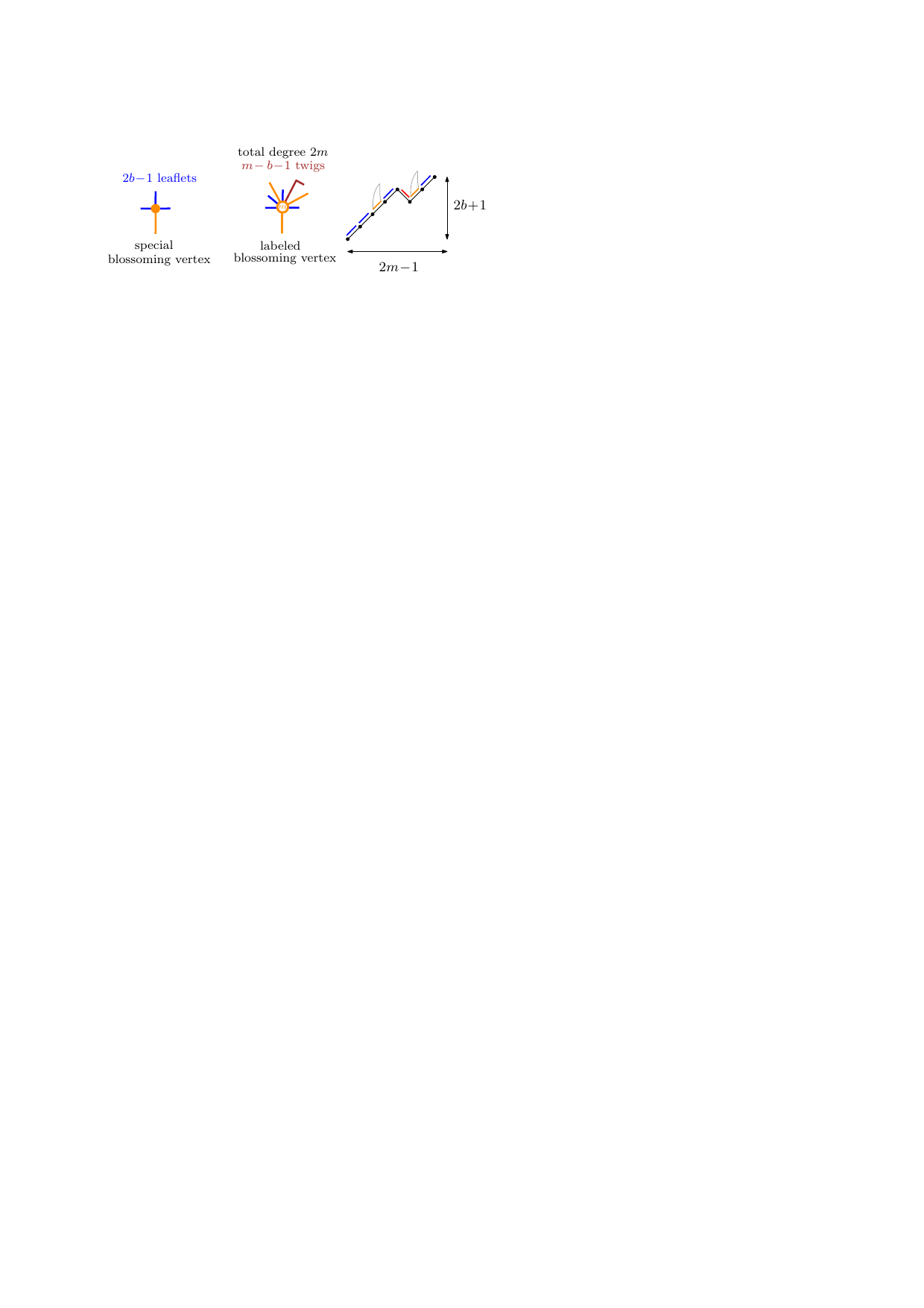}
  \caption{Representation of the order of decorations around blossoming vertices.
    Apart from their root, the special blossoming vertices are decorated with $2b-1$ leaflets,
   while labeled blossoming vertices with label $m$ are decorated with $m-b-1$ twigs
    and $m+b$ other decorations (interchangeably leaflets or attaching points).
  We represented on the left the proximity profile corresponding to the labeled blossoming vertex in the center, as defined
  in Figure~\ref{fig:step_III_dual}. Each leaflet corresponds to an up step associated with an empty slice,
  each attaching point to an up step with an associated $0$-slice and each twig to a down step associated 
  with a trivial slice. The example of labeled blossoming vertex shown here satisfies the tightness condition of  Proposition~\ref{prop:tightchar}.}
  \label{fig:blossoming-vertices}
\end{figure}
If we now keep only the part of the original decorated tree drawn on the dual map \emph{and the twigs},
all the dual vertices keep their degrees and we thus obtain a collection of \emph{blossoming vertices} (see Figure~\ref{fig:blossoming-vertices})
which are either special dual vertices of degree $2b$ or labeled dual vertices of degree $2m$ for some $m\geq b+1$.
We will call the blossoming vertices \emph{special} or \emph{labeled} accordingly.
Recall that a special blossoming vertex is decorated by $2b-1$ leaflets, and its incident half-edge
incident to its parent bent edge-vertex will be called the \emph{root} of this vertex. 
As for a labeled blossoming vertex of degree $2m$, its root is also the incident half-edge 
incident to its parent bent edge-vertex. The vertex is now decorated by $m-b-1$ twigs and a total of $m+b$ other half-edges, which are either leaflets
or \emph{attaching points}, which are the half-edges incident to its children bent edge-vertices in the decorated tree. 

Note that we use the same denominations ``root'' and ``attaching point''
for the (primal) $b$-arrow trees and the (dual) blossoming vertices.
In the decorated tree, the roots of blossoming vertices will be matched
with the attaching points of the arrow trees, and vice versa.

\bigskip

\begin{figure}
  \centering
  \includegraphics[width=.35\textwidth]{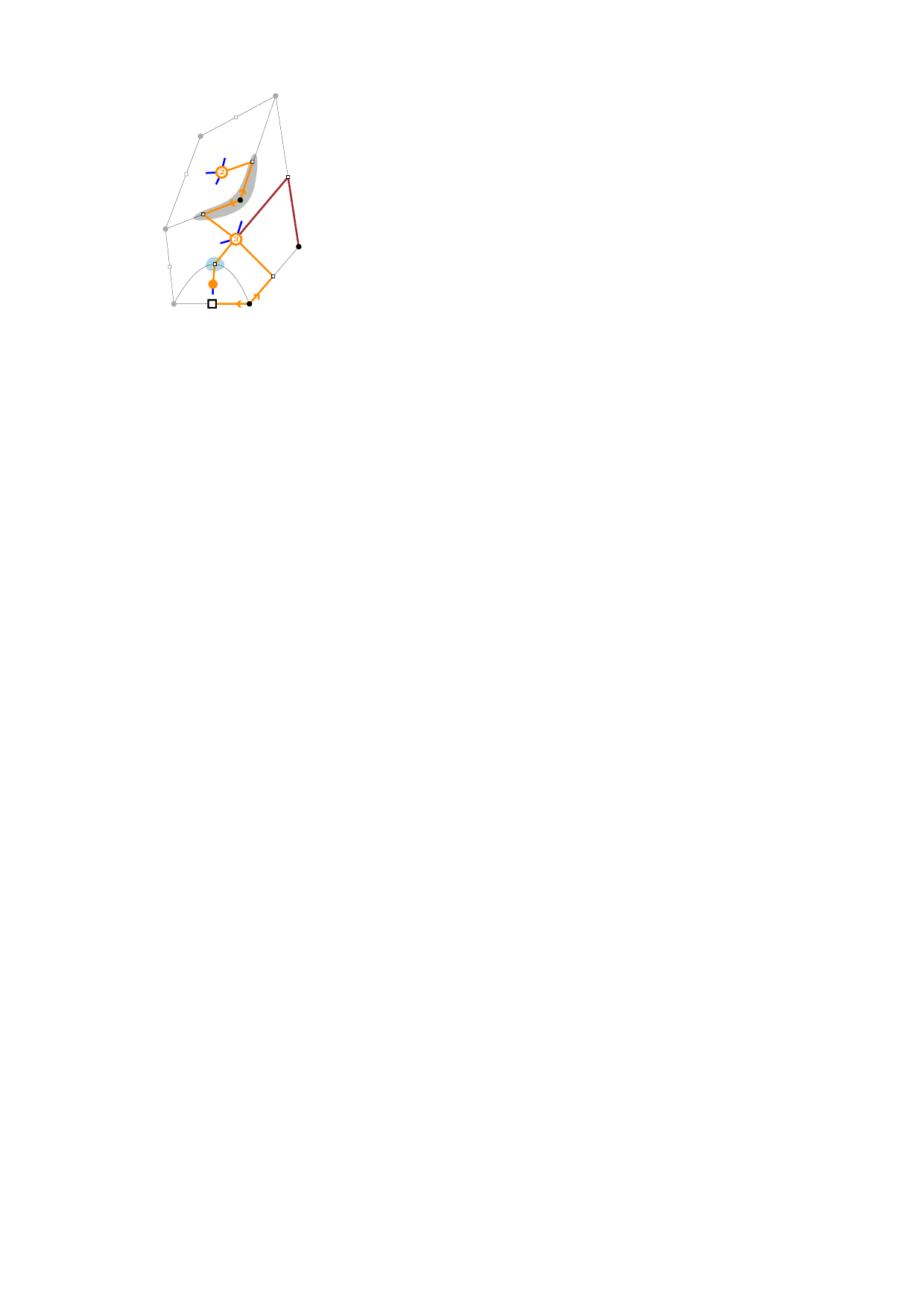}
  \caption{Example of a $1$-decorated tree.
    In gray and in light blue, the unique way to connect two labeled blossoming vertices (gray)
    or a labeled to a special blossoming vertex (light blue).
  }
  \label{fig:decomp-b-eq-1}
\end{figure}
\paragraph{The case $\boldsymbol{b=1}$.} In the case $b=1$, the discussion is different, since the connection between faces can be achieved via dual/dual edge-vertices.
We still have blossoming vertices of two types, special (with $2b-1=1$ leaflet) and labeled with $m-b-1=m-2$ twigs, which work similarly to the case $b>1$.
What would correspond to $1$-arrow trees is a collection of either degree-two primal vertices, each connected to two regular bent edge-vertex (when linking two labeled vertices),
or special dual/dual edge-vertices (when linking a special vertex to a labeled one).
In all cases, for each attaching point of a labeled vertex, there is a unique corresponding blossoming vertex,
and there is a unique way to make the connection
(with two or four half-edges, depending on whether the other blossoming vertex is special or labeled).
See Figure~\ref{fig:decomp-b-eq-1} for an example.

\subsection{Characterization of tightness}
\label{sec:tightchar}
So far, we did not impose that our slices be tight. A remarkable feature of the above decomposition of slices is that
the tightness of the slice is entirely characterized by simple constraints
on the decorations of the blossoming vertices in the associated decorated tree.

More precisely, recall that, in the context where there is no marked vertex, a slice
is tight whenever it has no leaf. Assume on the contrary that the slice contains a leaf $L$,
which is incident to a face $f$ of degree $2m$.
We know that $m > b$, as otherwise the contour of the face would contain
a cycle of size less than or equal to $2b-2$, which contradicts the $2b$-irreducibility.
The face $f$ will therefore give rise to a blossoming vertex labeled $m$,
which is built in a step (II).
With $c_s$ the corner of $f$ incident to $L$ (see Figure \ref{fig:step_III_dual} around vertex $L$, with $s=5$), we then know that:
\begin{itemize}
  \item[-] The proximity profile has $\ell_{s-1} = \ell_s + 1 = \ell_{s+1}$ ;
  \item[-] The slice $\boldsymbol{\sigma}_s$ is the trivial slice ;
  \item[-] The slice $\boldsymbol{\sigma}_{s+1}$ is the empty slice.
\end{itemize}
This latter property follows from the fact that
the leftmost geodesic from $L$ starts with the only edge leaving $L$,
which is also the base of the slice $\boldsymbol{\sigma}_{s+1}$.
Conversely, if we find a trivial slice followed by an empty slice around $f$, then
the identifications of the elementary slice boundaries shown on Figure~\ref{fig:step_III_dual}
create a leaf. For the blossoming vertex associated with $f$, 
this translates into a decoration where a twig follows immediately (in clockwise order) a leaflet.

We thus have the following characterization:
\begin{prop}[Characterization of tightness]
\label{prop:tightchar} 
A $2b$-irreducible slice (without marked vertices) is tight if and only if, at each blossoming vertex of the associated $b$-decorated tree,
the sequence of decorations read in clockwise order around this vertex from its root does not contain the pattern 
of a twig followed immediately by a leaflet. A decorated tree with this property will be said to be tight.
\end{prop}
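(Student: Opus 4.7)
The plan is to formalize the informal discussion preceding the proposition, turning it into a clean two-way argument by tracing how a leaf of $\boldsymbol{\sigma}$ is produced or destroyed by the case~(II) step of the recursive decomposition.

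For the forward direction, I assume that a $2b$-irreducible slice $\boldsymbol{\sigma}$ contains a leaf $L$ and show that the forbidden pattern appears in $T(\boldsymbol{\sigma})$. First I rule out $L$ lying on the boundary of the outer face: the red boundary is the unique geodesic from $C$ to $A$, so its interior vertices have degree at least two, similarly for the blue boundary, while $A$, $B$, and $C$ themselves each have degree at least two in any non-trivial non-empty slice. Hence $L$ is strictly interior, and its unique incident edge $e$ is traversed twice along the contour of some inner face $f$ of degree $2m$. The contour of $f$ with these two copies of $e$ removed is a closed walk of length $2m-2$ containing a cycle; by $2b$-irreducibility this cycle has length at least $2b$, forcing $m\geq b+1$, so $f$ produces a labeled blossoming vertex, via the unique case~(II) step where $f$ is the inner face adjacent to the base of a $2b$-subslice.

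I then examine this step locally around $L$. Writing $s$ for the index of the corner of $f$ at $L$ in the clockwise proximity sequence $(\ell_j)$, the fact that $e$ is traversed twice gives $V_{s-1}=V_{s+1}$ and $\ell_{s-1}=\ell_s+1=\ell_{s+1}$. By Proposition~\ref{prop:decompIII}, the sub-slice $\boldsymbol{\sigma}_s$ corresponding to the down-step is the trivial slice. The sub-slice $\boldsymbol{\sigma}_{s+1}$ has base $e$; its blue boundary is the leftmost geodesic from $L$ to the apex, which is forced to start with the edge $e$ itself (the only edge at $L$), so the two bounding geodesics of $\boldsymbol{\sigma}_{s+1}$ coalesce immediately and $\boldsymbol{\sigma}_{s+1}$ is empty. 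By the construction of Section~\ref{sec:treeformulation}, the decorations of the blossoming vertex associated with $f$, read clockwise from its root, then display a twig (from $\boldsymbol{\sigma}_s$) immediately followed by a leaflet (from $\boldsymbol{\sigma}_{s+1}$), which is the forbidden pattern.

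For the converse, suppose that some blossoming vertex of $T(\boldsymbol{\sigma})$ carries a twig immediately followed by a leaflet, clockwise from its root. This vertex is necessarily labeled (special blossoming vertices carry no twigs) and is produced by a case~(II) step whose face $f$ has $\boldsymbol{\sigma}_s$ trivial and $\boldsymbol{\sigma}_{s+1}$ empty for some index $s$. Reversing the case~(II) gluing as depicted on the right of Figure~\ref{fig:step_III_dual}, the two boundary identifications across $\boldsymbol{\sigma}_s$ and $\boldsymbol{\sigma}_{s+1}$ force a vertex $L=V_s=V_{s+1}$ incident to $f$ by a single edge, that is, a leaf in the intermediate sub-slice. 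All subsequent reconstruction steps, of type~(I) or~(II), only glue new pieces along geodesic boundaries of the current sub-slice and leave interior vertices untouched, so $L$ is still of degree one in the final slice $\boldsymbol{\sigma}$. The main obstacle is the careful justification of two local claims: that the unique edge at $L$ really forces $\boldsymbol{\sigma}_{s+1}$ to be empty rather than a generic up-step slice, and that the leaf created at an intermediate step is never destroyed later. Both points rest on the common geometric principle underlying the decomposition, namely that all gluings in Propositions~\ref{prop:decompI} and~\ref{prop:decompIII} are performed along geodesics or along the contour of a processed face, and therefore cannot change the degree of any vertex strictly interior to a previously assembled piece.
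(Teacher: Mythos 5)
Your proof is correct and takes the same route as the paper: the paper proves the proposition via the same informal argument in the paragraphs immediately preceding its statement (forward: a leaf $L$ forces $\boldsymbol{\sigma}_s$ trivial and $\boldsymbol{\sigma}_{s+1}$ empty at the case~(II) step creating $\Delta(f)$, hence the pattern; converse: such a pattern produces a leaf upon reassembly). Your version spells out two points left implicit in the paper --- that a leaf cannot lie on the outer-face contour of a slice, and that the leaf produced at an intermediate reconstruction step survives all later geodesic gluings --- both of which are sound. One small slip: in the converse you write $L=V_s=V_{s+1}$, whereas the correct identification (consistent with your own forward direction) is $L=V_s$ with $V_{s-1}=V_{s+1}$.
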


\section{Enumeration}
\label{sec:enum}

Our goal is to obtain the expression~\eqref{eq:main-result} for the \emph{number} $\mathcal N_b(m_1, \ldots, m_n)$ of planar bipartite tight $2b$-irreducible maps
which can be constructed from a fixed number $n$ of labeled faces with prescribed even degrees $2m_1, \ldots, 2m_n$. 
From the bijection of Proposition~\ref{prop:annulbij} and the characterizations of Proposition \ref{prop:tightdecomp}, this can be done by enumerating, on the one hand, tight maps
with two faces of prescribed degrees (and marked vertices) with a control on the length of their unique cycle and, on the other hand,
sequences of tight $2b$-irreducible $0$-slices. Using the coding of $2b$-irreducible
$0$-slices by $b$-decorated trees, this latter enumeration translates into the counting of
sequences of $b$-decorated trees whose blossoming vertices are of a prescribed nature (i.e.\ special or
labeled with a prescribed label) and satisfy the tightness characterization described in Proposition~\ref{prop:tightchar}.
Let us first proceed to the counting of $b$-decorated trees, where we start by enumerating sequences of (simplified) $b$-arrow trees (Section \ref{sec:arrowenum})
and then combine them with blossoming vertices to obtain the desired sequences of $b$-decorated trees
(Section~\ref{sec:blossenum}).

\subsection{Sequences of simplified arrow trees}
\label{sec:arrowenum}
Let us start with some notation:
\begin{Def}$\boldsymbol{\alpha_{k,n}^{(b)}:}$
\label{def:alphakn}
Let $n$, $k$ and $b$ be non-negative integers with $b>1$.
We denote by $\alpha_{k,n}^{(b)}$ the number of ordered $(k+1)$-uples of simplified $b$-arrow
trees with a total of $n+1$ attaching points, one of them being distinguished in the
first arrow tree.
\end{Def}
When $b=1$, we did not use $b$-arrow trees for the decorated tree decomposition.
Still, we define:
\begin{equation}
  \alpha_{k,n}^{(1)} := \delta_{k,n}
  \label{eq:alpha-k-n-1-def}
\end{equation}
to account for the fact that there is a unique way to connect
children blossoming vertices to their parent blossoming vertex (see Figure~\ref{fig:decomp-b-eq-1} and the related discussion at the end of Section~\ref{sec:treecomponents}).
This connection is either via a dual/dual edge-vertex if the child is special,
or via a pair of bent edge-vertices otherwise.

\medskip
The goal of this section is to obtain an expression for $\alpha_{k,n}^{(b)}$ and to show that it is for $k \leq n$ a polynomial in $b$, of degree $2(n-k)$. Note that $\alpha_{k,n}^{(b)}$ vanishes for $k>n$ since a simplified $b$-arrow
tree has at least one attaching point.
Another related quantity of interest is the number $\mathcal U^{(b)}_{0,n}$ of simplified $b$-arrow trees with $n$ attaching points, for $n \geq 1$.
We have the relation \begin{equation}
  \label{eq:single-arrow-tree}
  \mathcal U^{(b)}_{0,n} = \frac{ \alpha_{0,n-1}^{(b)}}n ,
\end{equation}
as obtained upon forgetting the distinguished attaching point
in the first and unique tree counted by $\alpha_{0,n-1}^{(b)}$.

\begin{Def}
  \label{def:subtree}
  For $0 \leq p \leq b-1$, we define a \emph{simplified $b$-arrow tree of excess $p$}
  as a tree which follows the rules (i), (ii) and (iii) of simplified $b$-arrow trees,
  but instead of (iv') satisfies:
  \begin{itemize}
    \item[(iv'')] next to each edge-vertex of degree one different from the root, there are $b-1$ arrows. These edge-vertices will be called \emph{attaching points}.
      Next to the root there are $b-p$ arrows.
  \end{itemize} 
  We denote by $\mathcal U^{(b)}_{p,n}$ the number of simplified $b$-arrow trees of excess $p$ with $n$ attaching points.
  Note that this notation is consistent with our definition of $\mathcal U^{(b)}_{0,n}$ just above,
  since simplified $b$-arrow trees are nothing but simplified $b$-arrow trees of excess $0$.
  
  In the case $p=b-1$, when following the rules (i), (ii), (iii) and (iv''), the root edge (with one arrow) connects the root vertex to a primal vertex of degree at least three,
  so that there are at least two attaching points: as a consequence, $\mathcal U^{(b)}_{b-1,1}$ should a priori vanish.
  For convenience, we decide that the degenerate configuration of the tree consisting of a single edge-vertex, which serves both as root and as attaching point, is also considered as a simplified $b$-arrow tree of excess $p=b-1$. We therefore set accordingly
  \begin{equation}
    \mathcal U^{(b)}_{b-1,1} = 1.
    \label{eq:degenerate-subtree}
  \end{equation}
\end{Def}
\noindent See Figure~\ref{fig:subtree} for an illustration.
The variable $p$ acts as a catalytic variable, as we are eventually interested in $\mathcal U^{(b)}_{0,n}$.

\begin{figure}[h]
  \centering
  \includegraphics[width=0.6\textwidth]{./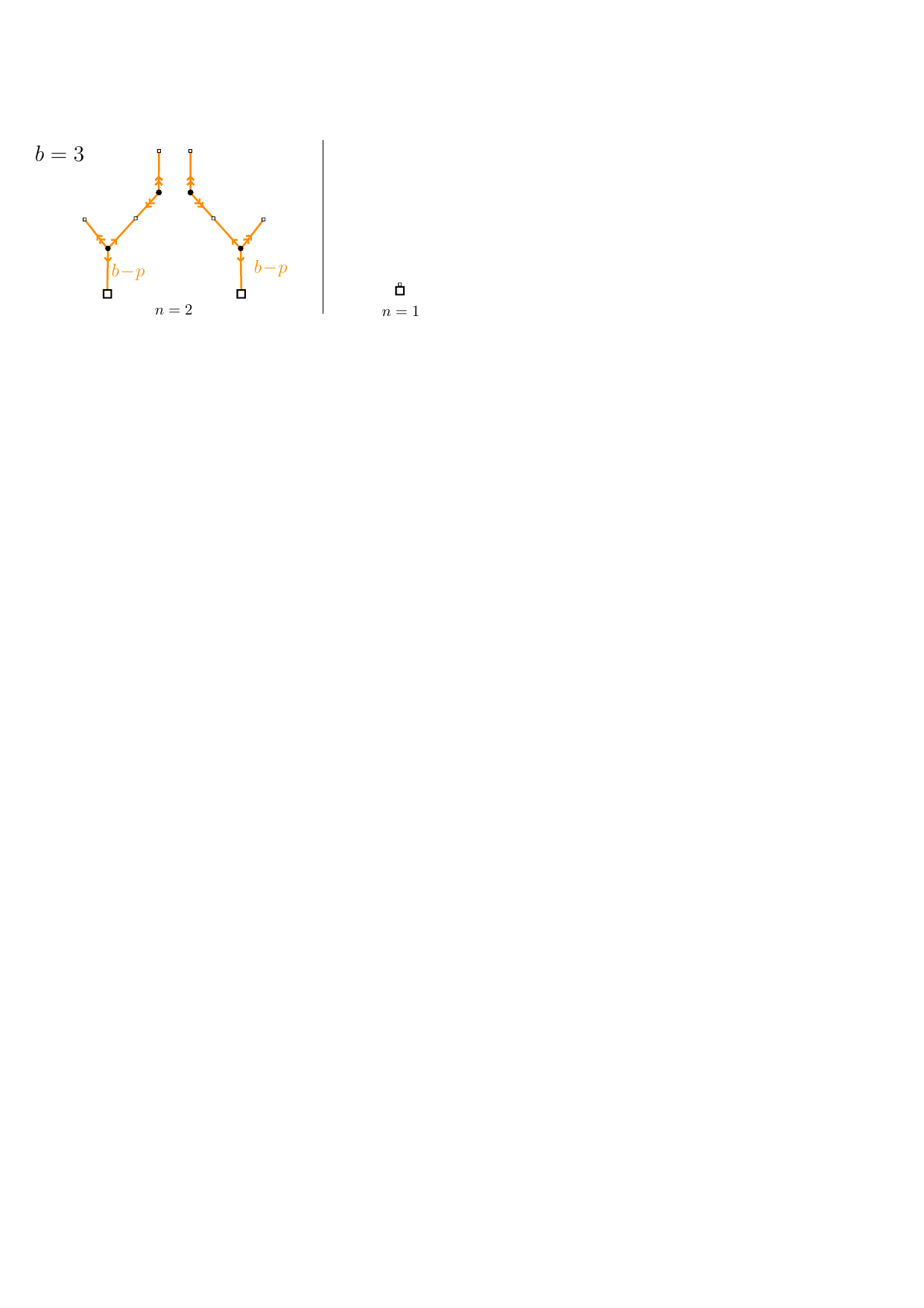}
  \caption{Examples of simplified $b$-arrow trees with excess $p=2$, when $b=3$. Left:
    the two possible cases with $n=2$ attaching points, leading to $\mathcal U^{(3)}_{2,2} = 2$. 
    Right: the degenerate configuration (which appears since $p=b-1$) with $n=1$ attaching point, leading to $\mathcal U^{(3)}_{2,1} = 1$.
  }
  \label{fig:subtree}
\end{figure}

\subsubsection{Elementary argument for the polynomiality in $b$}

Starting from the root of a simplified $b$-arrow tree with excess $p$,
we either are directly at an attaching point (if we are in the degenerate configuration, namely when $p=b-1$ and $n=1$),
or, climbing the tree, the root is connected to an inner (primal) vertex. If this vertex is of degree $2$,
then\footnote{Note that if $p=b-1$, this situation does not appear, since we would then have an edge bearing $b$ arrows not connected to the root, which is forbidden in simplified $b$-arrow trees.} its unique child is counted by $\mathcal U^{(b)}_{p+1,n}$.
We then climb the tree and cross all the vertices of degree $2$
until we reach an inner (primal) vertex with at least $2$ children, or an attaching point.
More precisely, we reach an attaching point if and only if $n=1$, which yields
\begin{equation}
  \mathcal U^{(b)}_{p,1} = 1\qquad\hbox{for $0 \leq p \leq b-1$.}
\end{equation}

Assume $n>1$, we then reach an inner vertex with $q \geq 2$ children.
We denote by $p_1, \ldots, p_q \in \{1,\ldots,b-1\}$ the excesses\footnote{Note that children subtrees cannot have excess $0$.} of the corresponding children subtrees,
and by $n_1, \ldots, n_q \geq 1$ their respective numbers of attaching points.
Clearly we have $n_1 + \cdots + n_q = n$.
We denote $s := p_1 + \cdots + p_q$. Then we have $p+1 \leq s \leq b$:
indeed, let us call $p_0$ the excess of the subtree just before we reach the vertex with $q$ children.
We have $p \leq p_0 \leq b-1$ since, by tree rules (ii) and (iv''),
the excess increases by $1$ at each crossing of an inner (primal) vertex of degree $2$.
Moreover, $(b-p_0) + p_1 + \cdots + p_q = b+1$ by the same rules, i.e.\ $s=p_0+1$.

As an example before we address the general case, let us first treat explicitly the case $n=2$.
The only integer composition of $n=2$ yields $q=2$, $n_1 = n_2 = 1$.
We then have
\begin{equation}
  \begin{split}
    \mathcal U^{(b)}_{p,2} &= \sum_{s = p+1}^{b} \sum_{\substack{p_1,p_2 \geq 1 \\ p_1 + p_2 = s}}\mathcal U^{(b)}_{p_1,1}\mathcal U^{(b)}_{p_2,1} 
    = \sum_{s = p+1}^{b} \sum_{p_1 = 1}^{s-1} 1 
    = \sum_{s = p+1}^{b} (s-1) \\
    &= \frac{b(b-1)}{2} - \frac{p(p-1)}{2} = \left[ \frac{m(m-1)}{2} \right]_p^b .
  \end{split}
  \label{eq:arrow-tree-n-2-q-2}
\end{equation}
From this, we deduce $\alpha_{0,1}^{(b)} = 2\mathcal U^{(b)}_{0,2} = b(b-1)$.
More generally we can write $\alpha_{n-1,n}^{(b)} = (2 + (n - 1))\mathcal U^{(b)}_{0,2}\left(\mathcal U^{(b)}_{0,1}\right)^{n-1}$.
Indeed, for $k=n-1$ we have exactly one arrow tree $\mathcal T_2$ with two attaching points, counted by $\mathcal U^{(b)}_{0,2}$, and $n-1$ other trees counted by $\mathcal U^{(b)}_{0,1}$.
This yields $n$ different sequences according to the position of $\mathcal T_2$,
each counted by $\mathcal U^{(b)}_{0,2}\left(\mathcal U^{(b)}_{0,1}\right)^{n-1}$.
As we distinguish an attaching point in the first tree, this yields
$2$ possibilities if $\mathcal T_2$ is in first position, and one possibility otherwise.
Altogether, with $\mathcal U^{(b)}_{0,1}=1$ and $\mathcal U^{(b)}_{0,2} = \frac{b(b-1)}{2}$ we have 
\begin{equation}
  \alpha_{n-1,n}^{(b)}=\frac{n+1}{2} b(b-1).
  \label{eq:alpha-n-1}
\end{equation}

The case $n=3$ is still doable by hand.
We either have $q=2$ or $q=3$.
When $q=3$, we have $n_1=n_2=n_3 = 1$, which yields after computations
\begin{equation}
  \begin{split}
  \sum_{s = p+1}^{b} \sum_{\substack{p_1,p_2,p_3 \geq 1 \\ p_1 + p_2 + p_3 = s}} \mathcal U^{(b)}_{p_1,1} \mathcal U^{(b)}_{p_2,1} \mathcal U^{(b)}_{p_3,1}
    &= \sum_{s = p+1}^{b} \sum_{p_1 = 1}^{s-1} \sum_{p_2 = 1}^{s-p_1-1}  \mathcal U^{(b)}_{p_1,1} \mathcal U^{(b)}_{p_2,1} \mathcal U^{(b)}_{s-p_1-p_2,1} \\
    &= \left[ \frac{m(m-1)(m-2)}{6} \right]_p^b .
  \label{eq:arrow-tree-n-3-q-3}
  \end{split}
\end{equation}

When $q=2$, we have the two symmetric cases $n_1=2,n_2=1$ and $n_1=1,n_2=2$.
The first (and the second, by symmetry) is counted by
\begin{equation}
  \begin{split}
  \sum_{s=p+1}^{b} \sum_{\substack{p_1,p_2 \geq 1 \\ p_1 + p_2 = s}}\mathcal U^{(b)}_{p_1,2}\mathcal U^{(b)}_{p_2,1} 
  = &\sum_{s = p+1}^{b} \sum_{p_1 = 1}^{s-1} \mathcal U^{(b)}_{p_1,2} \\
  = &\sum_{s = p+1}^{b} \sum_{p_1 = 1}^{s-1} \left[ \frac{m(m-1)}{2} \right]_{p_1}^b \\
  = &\left[ \frac{m(m-1)}{4}\left( b(b-1) - \frac{(m+1)(m-2)}{6}\right) \right]_p^b.
  \end{split}
  \label{eq:arrow-tree-n-3-q-2}
\end{equation}
Altogether, upon summing \eqref{eq:arrow-tree-n-3-q-3} and twice \eqref{eq:arrow-tree-n-3-q-2}, we get
\begin{equation}
  \begin{split}
  \mathcal U^{(b)}_{p,3} &= \left[ \frac{m(m-1)(m-2)}{6} + 2\frac{m(m-1)}{4}\left( b(b-1) - \frac{(m+1)(m-2)}{6}\right)  \right]_p^b \\
  &= \left[ \frac{m(m-1)}{2} \left( b(b-1) - \frac{(m-1)(m-2)}{6}\right)  \right]_p^b.
  \end{split}
\end{equation}
We may then, from this formula and \eqref{eq:arrow-tree-n-2-q-2}, derive the explicit expression:
\begin{equation}
  \alpha_{n-2,n}^{(b)} = \frac{n+1}{6} \left (\frac{3n+4}{4} b + 1 \right )b(b-1)^2 .
  \label{eq:alpha-n-2}
\end{equation}
This follows from $\alpha_{n-2,n}^{(b)}=(3+(n-2))\mathcal U^{(b)}_{0,3}\left( \mathcal U^{(b)}_{0,1} \right)^{n-2} \! + (2(n-2) + \binom{n-2}2)\!\left( \mathcal U^{(b)}_{0,2} \right)^2\!\left( \mathcal U^{(b)}_{0,1} \right)^{n-3}$ obtained similarly to the derivation of Equation~\eqref{eq:alpha-n-1}. This is also a particular case of the general Equation~\eqref{eq:alpha-k-n-direct} which we will see just below.

\medskip

Let us now discuss the case of general $n$. We have the following:
\begin{prop}
  For $b>1$, $n \geq 1$, $p \in \{0,\ldots b-1\}$, the quantity $\mathcal U^{(b)}_{p,n}$ is a polynomial
  in $b$ and $p$ of total degree $2(n-1)$, with a non-zero coefficient for $b^{2(n-1)}$.
  \label{prop:U-polynomial}
\end{prop}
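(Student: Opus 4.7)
The plan is to proceed by strong induction on $n$, using the recursive description of simplified $b$-arrow trees of excess $p$ given in the discussion preceding the proposition. Climbing from the root through vertices of degree two until hitting a branching vertex with $q \geq 2$ children yields, for $n \geq 2$ and $0 \leq p \leq b-1$, the recursion
\begin{equation}
\mathcal U^{(b)}_{p,n} = \sum_{s=p+1}^{b} \sum_{q \geq 2} \sum_{\substack{n_1+\cdots+n_q = n \\ n_i \geq 1}} \sum_{\substack{p_1+\cdots+p_q = s \\ p_i \geq 1}} \prod_{i=1}^{q} \mathcal U^{(b)}_{p_i, n_i},
\end{equation}
with base case $\mathcal U^{(b)}_{p, 1} = 1$.

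The engine of the induction consists of two elementary summation lemmas: (a) if $f(b, p_1, \ldots, p_q)$ is a polynomial of total degree $\leq d$, then $\sum_{p_1+\cdots+p_q = s,\, p_i \geq 1} f$ is a polynomial in $b$ and $s$ of total degree $\leq d + q - 1$ (reduction to monomials, using Faulhaber-type formulas for $\sum p_1^{a_1}\cdots p_q^{a_q}$ over $p_1 + \cdots + p_q = s$); and (b) if $h(b, s)$ is a polynomial of total degree $\leq d'$, then $\sum_{s = p+1}^b h$ is a polynomial in $b$ and $p$ of total degree $\leq d' + 1$ (monomial-wise, since $\sum_{s=p+1}^b s^j = F_{j+1}(b) - F_{j+1}(p)$ for a Bernoulli polynomial $F_{j+1}$ of degree $j+1$). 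By the induction hypothesis, the product $\prod_i \mathcal U^{(b)}_{p_i, n_i}$ has total degree $\leq \sum_i 2(n_i-1) = 2n - 2q$, which lifts to $\leq 2n - q - 1$ in $b, s$ after (a), and to $\leq 2n - q$ in $b, p$ after (b). This is maximized at $q = 2$, yielding the claimed upper bound $2(n-1)$ on the total degree of $\mathcal U^{(b)}_{p, n}$ and showing that the $q \geq 3$ contributions have strictly smaller total degree, hence in particular do not reach $b^{2(n-1)}$.

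To exhibit a non-vanishing coefficient $C_n$ of $b^{2(n-1)}$, I would isolate the homogeneous top-degree parts via the continuum scaling $b = N$, $p = Nx$, $s = N\sigma$, $p_i = N \pi_i$ with $N \to \infty$. Writing the degree-$2(n-1)$ part of $\mathcal U^{(b)}_{p, n}$ as $b^{2(n-1)}\, \tilde U_n(p/b)$, where $\tilde U_n$ is a one-variable polynomial of degree $\leq 2(n-1)$, the only surviving contribution to leading order in $N$ comes from $q = 2$, and the two discrete sums become Riemann integrals, yielding the fixed-point identity
\begin{equation}
\tilde U_n(x) = \sum_{\substack{n_1 + n_2 = n \\ n_1, n_2 \geq 1}} \int_x^1 \! d\sigma \int_0^\sigma \! d\pi \; \tilde U_{n_1}(\pi)\, \tilde U_{n_2}(\sigma - \pi), \qquad \tilde U_1 \equiv 1,
\end{equation}
so that $C_n = \tilde U_n(0)$. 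An easy induction on $n$ then shows $\tilde U_n(x) > 0$ for $x \in [0, 1)$: the integrand is pointwise positive by the inductive hypothesis, hence so is the double integral. In particular $C_n = \tilde U_n(0) > 0$, concluding the proof.

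The main obstacle lies in the third step, because the naive Catalan-type recursion $C_n \stackrel{?}{=} \tfrac{1}{2} \sum_{n_1 + n_2 = n} C_{n_1} C_{n_2}$ that one might guess from multiplying only the $b^{2(n_i - 1)}$ monomials is \emph{incorrect} (one can already check $C_3 = 5/12 \neq 1/2$ against the explicit expression for $\mathcal U^{(b)}_{p, 3}$). The reason is that ``off-diagonal'' monomials $b^a p_i^{a_i}$ with $a + a_i = 2(n_i - 1)$ in each factor $\mathcal U^{(b)}_{p_i, n_i}$ do interact with the two Faulhaber summations to contribute non-trivially to $b^{2(n-1)}$. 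The continuum scaling argument captures all these contributions simultaneously through the top-degree polynomial $\tilde U_n$, and it is this pointwise-positive integral recursion, rather than any cruder combinatorial identity, that enables a clean positivity argument.
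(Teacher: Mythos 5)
Your proof is correct and follows the same overall strategy as the paper: climbing the tree to a branching vertex yields the recursion over $q, (n_i), s, (p_i)$, and degree counting through the two nested discrete integrations gives the upper bound $2n - q$, maximised at $q=2$. Where you go further is on the non-vanishing of the coefficient of $b^{2(n-1)}$: the paper simply asserts that ``the dominant terms \ldots come from the terms for $q=2$, with a total degree of $2(n-1)$'' and that ``setting $p=0$ yields the degree $2(n-1)$ in $b$ alone'', leaving implicit why no cancellation occurs in the top-degree homogeneous part when the various $q=2$ compositions $(n_1,n_2)$ are summed. Your continuum-scaling argument---replacing the top-degree homogeneous part by a one-variable polynomial $\tilde U_n(p/b)$ and showing that the recursion passes, at leading order in $N=b$, to a double Riemann integral with pointwise positive integrand---makes this rigorous and transparent. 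Your remark that the naive Catalan-type recursion $C_n \stackrel{?}{=} \tfrac12 \sum C_{n_1} C_{n_2}$ is wrong (already $C_3 = 5/12 \neq 1/2$, as one checks from $\mathcal U^{(b)}_{p,3}$) is well taken and correctly diagnoses why: the leading coefficient in $b$ alone is produced not only by the product of leading coefficients of the factors, but also by the interplay of lower $b$-degree, higher $p_i$-degree monomials with the Faulhaber summations, and the integral equation for $\tilde U_n$ keeps track of all such contributions simultaneously. In short, your argument buys a genuinely rigorous positivity statement where the paper's proof is terse.
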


\begin{proof}
In the general case,
we sum over all possible values of $q \geq 2$, over $n_1, \ldots, n_q$ with sum $n$, over $s \geq p+1$, and over $p_1, \ldots, p_q$ with sum $s$.
The trick is that, for fixed $n$, we have a finite number of configurations of $q \in \left\{ 2, \ldots, n \right\}$ and $n_1, \ldots, n_q \geq 1$ summing to $n$.
We then have the recurrence formula, valid for $0 \leq p \leq b-1$ and $n \geq 2$:
\begin{equation}
  \mathcal U^{(b)}_{p,n} = \sum_{q = 2}^{n} \sum_{\substack{n_1, \ldots, n_q \geq 1 \\ n_1 + \cdots + n_q = n}}
                     \sum_{s=p+1}^{b} \sum_{\substack{p_1, \ldots, p_q \geq 1 \\ p_1 + \cdots + p_q = s}}
                       \mathcal U^{(b)}_{p_1,n_1} \cdots \mathcal U^{(b)}_{p_q,n_q}
  \label{eq:arrow-tree-recurrence}
\end{equation}
which allows us to prove Proposition~\ref{prop:U-polynomial} by recurrence.
Indeed, the sum over $s$ and over the simplex of the $p_1, \ldots, p_q$ is a discrete integration of dimension $q$ of the polynomial $\mathcal U^{(b)}_{p_1,n_1} \cdots \mathcal U^{(b)}_{p_q,n_q}$,
which yields a polynomial of degree $2(n_1-1) + \cdots + 2(n_q-1) + q = 2n- q$ in $b$ and $p$.
The dominant terms in the recurrence formula then come from the terms for $q=2$, with a total degree of $2(n - 1)$,
which, with initialization $\mathcal U^{(b)}_{p,1}=1$, concludes the recurrence for the total degree.
Setting $p=0$ yields the degree $2(n-1)$ in $b$ alone.
\end{proof}

This leads to the following:
\begin{cor}
  \label{cor:alpha-polynomial}
  For $k \leq n$, $\alpha_{k,n}^{(b)}$ is a polynomial in $b$ of degree $2(n-k)$.
\end{cor}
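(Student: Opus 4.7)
The plan is to deduce Corollary~\ref{cor:alpha-polynomial} directly from Proposition~\ref{prop:U-polynomial} via the obvious decomposition of a $(k+1)$-tuple of simplified $b$-arrow trees according to the number of attaching points in each tree. I would first write down the explicit formula
\begin{equation*}
  \alpha_{k,n}^{(b)} \;=\; \sum_{\substack{n_0,\ldots,n_k \geq 1 \\ n_0 + \cdots + n_k = n+1}} n_0\, \mathcal{U}^{(b)}_{0,n_0}\, \mathcal{U}^{(b)}_{0,n_1} \cdots \mathcal{U}^{(b)}_{0,n_k},
\end{equation*}
where $n_i$ denotes the number of attaching points of the $i$-th tree and the factor $n_0$ accounts for the choice of the distinguished attaching point within the first tree. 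The sum is finite, since each $n_i \geq 1$ forces at most $\binom{n}{k}$ compositions.

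Next, I would invoke Proposition~\ref{prop:U-polynomial} specialised at $p=0$: each factor $\mathcal{U}^{(b)}_{0,n_i}$ is then a polynomial in $b$ of degree $2(n_i-1)$, the monomial $b^{2(n_i-1)}$ surviving the specialisation with its asserted non-zero coefficient. Multiplying, each summand in the display is a polynomial in $b$ of degree
\begin{equation*}
  \sum_{i=0}^{k} 2(n_i-1) \;=\; 2\bigl((n+1)-(k+1)\bigr) \;=\; 2(n-k),
\end{equation*}
so $\alpha_{k,n}^{(b)}$ is a polynomial in $b$ of degree at most $2(n-k)$.

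The only remaining point---showing the degree is exactly $2(n-k)$ rather than smaller---is the only place where a small extra argument is needed, but it is handled by a standard positivity remark. Since $\mathcal{U}^{(b)}_{0,n_i}$ counts combinatorial objects, it takes nonnegative integer values at every integer $b \geq 2$; a polynomial that is eventually nonnegative must have a nonnegative leading coefficient, and combining with the non-vanishing asserted by Proposition~\ref{prop:U-polynomial} yields a strictly positive leading coefficient. Every summand then has a positive leading coefficient of degree $2(n-k)$, and since all summands have the same sign no cancellation can occur: the total degree is exactly $2(n-k)$. I do not expect any genuine obstacle, as all the combinatorial substance is already packaged in Proposition~\ref{prop:U-polynomial}.
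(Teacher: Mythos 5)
Your proof is correct and mirrors the paper's own argument: you write the same convolution formula (Equation~\eqref{eq:alpha-k-n-direct} in the paper, where the indices run over $n_1,\ldots,n_{k+1}$ rather than your $n_0,\ldots,n_k$) decomposing a $(k+1)$-tuple by the number of attaching points per tree, and then apply Proposition~\ref{prop:U-polynomial} at $p=0$. Your closing positivity argument, ruling out any cancellation that could lower the degree below $2(n-k)$, is a valid extra bit of care that the paper leaves implicit.
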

\begin{proof}
We may retrieve, in the general case, the value of $\alpha_{k,n}^{(b)}$ by the formula:
\begin{equation}
  \alpha_{k,n}^{(b)} = \sum_{\substack{n_1, \ldots, n_{k+1} \geq 1\\ n_1 + \cdots + n_{k+1} = n+1}} n_1 \mathcal U^{(b)}_{0,n_1} \cdots \mathcal U^{(b)}_{0,n_{k+1}} .
  \label{eq:alpha-k-n-direct}
\end{equation}
arising from Definition~\ref{def:alphakn}.
In particular, since $\mathcal U^{(b)}_{0,n_i}$ is of degree $2(n_i - 1)$ in $b$, $\alpha_{k,n}^{(b)}$ is a polynomial in $b$ of degree $2(n_1 - 1) + \cdots + 2(n_{k+1} -1) = 2 (n+1) - 2(k+1) = 2(n-k)$.
\end{proof}

\subsubsection{Direct formula via Lagrange inversion}
\label{sec:laginv}

Besides the above purely combinatorial approach, we may obtain a slightly more explicit direct (non-recursive) formula for $\alpha_{k,n}^{(b)}$
with an analytic approach based on a Lagrange inversion.
More precisely, let us establish the expression~\eqref{eq:alpha-k-n} for $\alpha_{k,n}^{(b)}$, namely:
\begin{prop}
  \label{prop:arrowtreeformula}
  For $k,n$ non-negative integers and $b\geq 1$, we have
  \begin{equation}
    \alpha_{k,n}^{(b)} = [u^{n-k}] \frac{1}{\left( 1+ \sum\limits_{j=2}^b \frac1b \binom{b}j \binom{b}{j-1} (-u)^{j-1} \right)^{n+1}}.
    \label{eq:arrowtreeformula}
  \end{equation}
\end{prop}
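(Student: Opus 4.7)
My plan is to derive the formula by reducing it to the Lagrange inversion formula applied to a single-variable functional equation for $W(u):=\sum_{n\geq 1}\mathcal U^{(b)}_{0,n}\,u^n$. From \eqref{eq:alpha-k-n-direct}, the factor $n_1$ can be interpreted as $u\,\partial_u$ acting on the first factor, yielding
\begin{equation*}
\sum_{n\geq 0}\alpha_{k,n}^{(b)}\,u^{n+1} \;=\; u\,W'(u)\,W(u)^{k} \;=\; \frac{u}{k+1}\frac{d}{du}\bigl(W(u)^{k+1}\bigr),
\end{equation*}
so that $\alpha_{k,n}^{(b)} = \tfrac{n+1}{k+1}[u^{n+1}]W(u)^{k+1}$.

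The key step is to establish that $W(u)$ is the unique formal series with $W(0)=0$, $W'(0)=1$, satisfying
\begin{equation*}
u \;=\; W\cdot F(W), \qquad F(W) := 1+\sum_{j=2}^{b}\frac1b\binom{b}{j}\binom{b}{j-1}(-W)^{j-1}. \qquad(\star)
\end{equation*}
Since $F(0)=1$, equation $(\star)$ can be rewritten as $W = u\,\Phi(W)$ with $\Phi := 1/F$, and the Lagrange inversion formula gives
\begin{equation*}
[u^{n+1}]W^{k+1} \;=\; \frac{k+1}{n+1}[W^{n-k}]F(W)^{-(n+1)} \;=\; \frac{k+1}{n+1}[u^{n-k}]F(u)^{-(n+1)},
\end{equation*}
which combined with the previous display yields \eqref{eq:arrowtreeformula}. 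The degenerate case $b=1$, where $F\equiv 1$ and $(\star)$ reduces to $u=W$, is consistent with the convention \eqref{eq:alpha-k-n-1-def}.

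To prove $(\star)$, I would introduce the bivariate auxiliary polynomial $R(z,u) := \sum_{p=1}^{b-1} z^{p}\,\mathcal U^{(b)}_{p}(u)$, of degree $b-1$ in $z$. The recursion \eqref{eq:arrow-tree-recurrence} becomes $\mathcal U^{(b)}_{p}(u) = u + \sum_{s=p+1}^{b}[z^{s}]\bigl(R^{2}/(1-R)\bigr)$, and telescoping with the convention $\mathcal U^{(b)}_{b}:=u$, combined with the partial-sum identity $\sum_{p=0}^{N}[z^{p}]A(z)=[z^{N}]A(z)/(1-z)$, gives
\begin{equation*}
W-u \;=\; [z^{b}]\,\frac{R(z,u)^{2}}{(1-R(z,u))(1-z)}.
\end{equation*}
The remaining task is to eliminate the catalytic variable $z$: the appearance of the Narayana numbers $\tfrac1b\binom{b}{j}\binom{b}{j-1}$ in $F$ strongly suggests that the bounded $z$-degree of $R$ triggers a Lagrange-type truncation identity, so that the right-hand side collapses to the polynomial in $W$ prescribed by~$(\star)$.

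The main obstacle will be this elimination of $z$. The cleanest route is likely either (i) algebraic, exploiting the polynomiality of $R$ in $z$ to recast the coefficient extraction as a resultant or kernel-method computation that reduces to a Narayana identity, or (ii) bijective, interpreting $F(u)^{-1}$ as the generating function of a classical family of non-crossing objects (dissected polygons, or Dyck paths with weighted peaks) and setting up a direct correspondence with the primal-tree structure. As consistency checks, the expansion $W(u)=u+\tfrac{b(b-1)}{2}u^{2}+\cdots$ predicted by $(\star)$ reproduces $\mathcal U^{(b)}_{0,1}=1$ and $\mathcal U^{(b)}_{0,2}=b(b-1)/2$ from \eqref{eq:arrow-tree-n-2-q-2}, and for $b=2$ the equation $W(1-W)=u$ recovers the Catalan generating function, in agreement with the explicit values computed in the preceding subsection.
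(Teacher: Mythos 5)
Your reduction of the problem to the implicit equation $(\star)$, namely $u=W\,F(W)$, is exactly equivalent to the paper's key relation $z=h_b(U_0(z))$ (since $h_b(u)=u\,F(u)$), and once $(\star)$ is granted your Lagrange--B\"urmann extraction matches the paper's Equation~\eqref{eq:alphalagburr} step for step. That part is fine.

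The gap is in the proof of $(\star)$ itself, which is the crux of the proposition, and you openly leave it unresolved. You derive $W-u=[z^b]\dfrac{R(z,u)^2}{(1-R(z,u))(1-z)}$ with $R(z,u)=\sum_{p=1}^{b-1}z^p\,\mathcal U^{(b)}_p(u)$, and then only gesture at how to eliminate $z$ (``a Lagrange-type truncation identity\dots\ the cleanest route is likely\dots''), offering two routes without carrying either out. The paper's proof handles precisely this step by a trick you miss: rather than working with a polynomial truncated at $z^{b-1}$, one \emph{extends} the definition of $U_p$ to all $p\geq1$ via the triangular recurrence \eqref{eq:rec_2p-slice_genfunc-unrolled-bis}, forms the untruncated series $U(t,z)=\sum_{p\geq1}U_p(z)t^p$, and observes that the recurrence collapses to the tidy implicit equation $t=\dfrac{U}{(1-U)(U+U_0)}$; Lagrange inversion in the catalytic variable $t$ then gives $U_p=h_p(U_0)$ \emph{for every} $p\geq 1$, and specializing to $p=b$ and using $U_b(z)=z$ yields $(\star)$. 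The bounded $z$-degree of your $R(z,u)$ is not a feature that ``triggers'' a truncation identity; on the contrary, the boundedness is what blocks a clean Lagrange inversion, and the paper's resolution is to undo the truncation. Without this step (or a worked-out substitute), the proof is incomplete.

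Minor note: your rewriting of \eqref{eq:alpha-k-n-direct} using $u\,\partial_u$, and the consistency checks ($b=1$ giving $\alpha_{k,n}^{(1)}=\delta_{k,n}$, $\mathcal U^{(b)}_{0,2}=b(b-1)/2$, $b=2$ giving the Catalan equation), are all correct and are indeed the right sanity checks to perform.
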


\begin{proof} For $b=1$ this indeed yields $\alpha_{k,n}^{(1)} = \delta_{k,n}$ which is the desired value defined in Equation~\eqref{eq:alpha-k-n-1-def}. Assume now $b>1$ and 
let $U_0(z)$ be the generating function of simplified $b$-arrow trees
counted with a weight $z$ per attaching point. 
By Definition~\ref{def:alphakn}, we have
\begin{equation}
  \label{eq:arrowtreedef}
  \alpha_{k,n}^{(b)} = [z^n] U_0'(z) U_0(z)^k = \frac{n+1}{k+1} [z^{n+1}] U_0(z)^{k+1}.
\end{equation}
More generally, for $0 \leq p \leq b-1$, we introduce the generating function $U_p(z)$ of
simplified $b$-arrow trees of excess $p$,
still counted with a weight $z$ per attaching point.
We may write
\begin{equation}
  U_p(z) = \sum_{n \geq 1} \mathcal U^{(b)}_{p,n}z^n, \qquad 0 \leq p \leq b-1.
  \label{eq:u-GF-definition}
\end{equation}

We may now obtain the formula:
\begin{equation}
  U_p(z) = \sum_{q=1}^{p+1} \sum_{\substack{p_1, \ldots, p_q \geq 1\\p_1 + \cdots + p_q = p+1}} \prod_{j=1}^{q}U_{p_j}(z),
  \qquad 0 \leq p \leq b-1,
  \label{eq:rec_2p-slice_genfunc}
\end{equation}
with the convention $U_b(z) := z$.
This values ensures that, when $p=b-1$, the $q=1$ term of the sum, which is equal to $U_b(z)$, gets the proper value $z$, consistent with~\eqref{eq:degenerate-subtree}.
Equation~\eqref{eq:rec_2p-slice_genfunc} simply expresses the fact that, in a non-degenerate simplified $b$-arrow tree of excess $p$, the root vertex is connected to a primal vertex which has a number $q\geq1$ of other neighbours which are edge-vertices.
Removing this primal vertex and its incident primal half-edges, the tree is split into $q$ rooted subtrees, which are simplified $b$-arrow trees with respective excesses denoted $p_1, \ldots, p_q$.
From rules (ii), (iii) and (iv''), we get the constraint $p_1 + \cdots + p_q = p+1$.

Note the similarity of this decomposition with the case (I) of the decomposition of $b$-irreducible $p$-slices:
indeed, for $0 \leq p \leq b-1$, $U_p(z)$ also counts $b$-irreducible $p$-slices with all inner faces of degree $2b$, each weighted by $z$.

\begin{rem}
  Note that Equation~\eqref{eq:rec_2p-slice_genfunc} can be obtained from Equation~\eqref{eq:arrow-tree-recurrence}
  by isolating the term $s=p+1$ in the latter and identifying the rest of the sum as $\mathcal U^{(b)}_{p+1,n}$, which yields:
  \begin{equation}
    \mathcal U^{(b)}_{p,n} = \mathcal U^{(b)}_{p+1,n} + \sum_{q = 2}^{n} \sum_{\substack{n_1, \ldots, n_q \geq 1 \\ n_1 + \cdots + n_q = n}}
                     \sum_{\substack{p_1, \ldots, p_q \geq 1 \\ p_1 + \cdots + p_q = p+1}}
                     \mathcal U^{(b)}_{p_1,n_1} \cdots \mathcal U^{(b)}_{p_q,n_q}, \qquad 0 \leq p \leq b-1
    \label{eq:arrow-tree-recurrence-unrolled}
  \end{equation}
  with the convention that $\mathcal U^{(b)}_{b,n} := \delta_{n,1}$ consistent with the convention $U_b(z) = z$.
  Then, translated in generating series, this yields
  \begin{equation}
    U_p(z) = U_{p+1}(z) + \sum_{q=2}^{p+1} \sum_{\substack{p_1, \ldots, p_q \geq 1\\p_1 + \cdots + p_q = p+1}} \prod_{j=1}^{q}U_{p_j}(z), \qquad 0 \leq p \leq b-1
    \label{eq:rec_2p-slice_genfunc-unrolled}
  \end{equation}
  which is equivalent to Equation~\eqref{eq:rec_2p-slice_genfunc}.
\end{rem}

In  its equivalent form \eqref{eq:rec_2p-slice_genfunc-unrolled},
we see that the system \eqref{eq:rec_2p-slice_genfunc} is triangular,
as it may be rewritten
\begin{equation}
  U_{p+1}(z) = U_p(z) - \sum_{q=2}^{p+1} \sum_{\substack{p_1, \ldots, p_q \geq 1\\p_1 + \cdots + p_q = p+1}} \prod_{j=1}^{q}U_{p_j}(z)
  \label{eq:rec_2p-slice_genfunc-unrolled-bis}
\end{equation}
for $0 \leq p \leq b-1$, where the sum in the right-hand side involves
only $U_1(z),\ldots,U_p(z)$. It follows that $U_p(z)$ is a polynomial
in $U_0(z)$ for all $p$. We may express it explicitly via the
following trick, borrowed from~\cite[Section~5.4]{irredmaps}: let us
\emph{define} $U_{p+1}(z)$ recursively for all $p \geq b$ via the
relation~\eqref{eq:rec_2p-slice_genfunc-unrolled-bis}. Note that, in
this relation, we may take the sum over $q$ from $2$ to $\infty$,
since the terms $q>p+1$ give no contribution. Then, in terms of the
generating function $U(t,z) := \sum_{p \geq 1} U_p(z) t^p$, the
relation yields
\begin{equation}
  U(t,z) = t (U(t,z) + U_0(z)) - \sum_{q=2}^{\infty} U(t,z)^q
\end{equation}
which may be rewritten as
\begin{equation}
  t = \frac{U(t,z)}{(1-U(t,z))(U(t,z)+U_0(z))}.
\end{equation}
Using the Lagrange inversion formula, we obtain, for all $p \geq 1$,
\begin{equation}
    U_p(z) = [t^p] U(t,z) = \frac 1 p [u^{p-1}]\left( (1-u)(u+U_0(z)) \right)^p = h_p(U_0(z))
\end{equation}
where
\begin{equation}
  \label{eq:hpdef}
  h_p(u) := \sum_{j=1}^{p} \frac{(-1)^{j-1}}p \binom{p}{j} \binom{p}{j-1} u^j
\end{equation}
is a polynomial in $u$, with zero constant term and with linear term
$u$. 
Recalling that $U_b(z)=z$, we
find that $U_0(z)$ is algebraic and determined implicitly by
\begin{equation}
  \label{eq:U0implicit}
  z = h_b(U_0(z)).
\end{equation}
Applying the Lagrange-Bürmann inversion formula to~\eqref{eq:arrowtreedef}, we get
\begin{equation}
  \label{eq:alphalagburr}
  \alpha_{k,n}^{(b)} = \frac{n+1}{k+1} [z^{n+1}] U_0(z)^{k+1} = [u^{n-k}] \left( \frac u {h_b(u)} \right)^{n+1}
\end{equation}
from which~\eqref{eq:arrowtreeformula}, hence~\eqref{eq:alpha-k-n},
follows immediately.
\end{proof}

Pushing further the above computations, we may arrive at another
expression for $\alpha_{k,n}^{(b)}$, which has the interest of being
manifestly polynomial in $b$:
\begin{prop}
  \label{prop:alphapolexp}
  For $k,n$ non-negative integers, we have
  \begin{equation}
    \label{eq:alphapolexp}
    \alpha_{k,n}^{(b)} = \sum_{s=0}^\infty (-1)^{n-k+s} \binom{n+s}n
      \sum_{\substack{\ell_1,\ldots,\ell_s \geq 1 \\ \ell_1+\cdots+\ell_s=n-k}} r_{\ell_1}(b) \cdots r_{\ell_s}(b)
  \end{equation}
  where
  \begin{equation}
    \label{eq:rldef}
    r_\ell(b) := \frac1b \binom{b}{\ell+1} \binom{b}{\ell} =
    \frac{1}{\ell! (\ell+1)!} \prod_{i=1}^{\ell}(b-i+1)(b-i)
  \end{equation}
  is a polynomial of degree $2\ell$ in $b$. As a consequence,
  $\alpha_{k,n}^{(b)}$ is a polynomial of degree $2(n-k)$ in $b$ for
  $k \leq n$, and vanishes for $k>n$.
\end{prop}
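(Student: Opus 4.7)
The plan is to expand the generating-function formula \eqref{eq:arrowtreeformula} directly as a power series with polynomial-in-$b$ coefficients. First I would reindex the inner sum in the denominator by setting $\ell := j-1$, so that
\begin{equation*}
1 + \sum_{j=2}^{b} \frac{1}{b}\binom{b}{j}\binom{b}{j-1}(-u)^{j-1} \;=\; 1 + \sum_{\ell=1}^{b-1} r_\ell(b)\,(-u)^\ell.
\end{equation*}
The key observation is that $r_\ell(b) = \frac{1}{b}\binom{b}{\ell+1}\binom{b}{\ell}$ vanishes for $\ell \geq b$ (since $\binom{b}{\ell+1}=0$), so the upper bound on $\ell$ may be extended to $+\infty$ without changing the sum. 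Denoting $S(u) := \sum_{\ell \geq 1} r_\ell(b)\,(-u)^\ell$, this rewriting frees the formula from any constraint tying $\ell$ to $b$ and will produce a sum whose individual coefficients are visibly polynomial.

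Next I would apply the standard expansion $(1+x)^{-(n+1)} = \sum_{s \geq 0}(-1)^s \binom{n+s}{n} x^s$ with $x = S(u)$, then expand $S(u)^s$ multinomially:
\begin{equation*}
\frac{1}{(1+S(u))^{n+1}} = \sum_{s \geq 0} (-1)^s \binom{n+s}{n}\!\sum_{\ell_1,\ldots,\ell_s \geq 1}\! r_{\ell_1}(b)\cdots r_{\ell_s}(b)\,(-u)^{\ell_1+\cdots+\ell_s}.
\end{equation*}
Extracting the coefficient of $u^{n-k}$ and combining the two sign factors into $(-1)^{n-k+s}$ yields the claimed identity \eqref{eq:alphapolexp}. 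The sum is manifestly finite because any tuple of positive integers summing to $n-k$ forces $s \leq n-k$, and for $k>n$ no such tuple exists, so $\alpha_{k,n}^{(b)}=0$.

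Finally I would verify the polynomial nature and degree of $r_\ell(b)$ directly from its definition: writing each binomial as a falling factorial, one finds
\begin{equation*}
\binom{b}{\ell+1}\binom{b}{\ell} \;=\; \frac{b^2(b-1)^2 \cdots (b-\ell+1)^2 (b-\ell)}{\ell!\,(\ell+1)!},
\end{equation*}
so dividing by $b$ recovers the product formula in \eqref{eq:rldef} and shows that $r_\ell(b)$ is polynomial of degree exactly $2\ell$ in $b$. Each term in \eqref{eq:alphapolexp} is then polynomial of degree $2(\ell_1+\cdots+\ell_s)=2(n-k)$, and therefore so is $\alpha_{k,n}^{(b)}$. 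There is no real conceptual obstacle here; the only care needed is the sign bookkeeping and the justification of the extension of the summation range, which is where the polynomiality of the final expression comes from.
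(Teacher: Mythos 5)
Your proof is correct and follows essentially the same route as the paper's: both extend the finite sum in the denominator to an infinite one (using that $r_\ell(b)$ vanishes for integer $\ell \geq b$), apply the negative-binomial expansion $(1+S(u))^{-(n+1)} = \sum_{s\geq 0}(-1)^s\binom{n+s}{n}S(u)^s$, expand $S(u)^s$ as a sum over tuples $(\ell_1,\ldots,\ell_s)$, and extract $[u^{n-k}]$. The only cosmetic difference is that you start from Equation~\eqref{eq:arrowtreeformula} while the paper works directly from $h_b(u)$ in~\eqref{eq:hpdef} and \eqref{eq:alphalagburr}; these are equivalent since \eqref{eq:arrowtreeformula} is obtained by plugging $h_b$ into \eqref{eq:alphalagburr}.
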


Let us remark that, in~\eqref{eq:alphapolexp}, the term $s=0$
contributes only for $k=n$, and the rightmost sum is then equal to $1$
as it involves a single term corresponding to the empty sequence: this
yields $\alpha_{n,n}^{(b)}=1$ as wanted. For $k>n$, the rightmost sum
vanishes for all $s$, since no sequence satisfies the wanted
condition: this yields $\alpha_{k,n}^{(b)}=0$ as expected. Note finally
that $r_\ell(0)=r_\ell(1)=0$ for all $\ell \geq 1$, and hence
$\alpha_{k,n}^{(0)}=\alpha_{k,n}^{(1)}=\delta_{k,n}$ as wanted.

\begin{proof}[Proof of Proposition~\ref{prop:alphapolexp}]
  Observe that, in~\eqref{eq:hpdef}, we may replace the upper bound of
  the sum by $\infty$ since all terms $j>p$ have a zero
  contribution. Replacing $p$ by $b$, doing the change of variable
  $j=\ell+1$, and putting the first term apart, this allows to rewrite
  \begin{equation}
    \label{eq:hbalt}
    h_b(u) = u + \sum_{\ell=1}^\infty (-1)^\ell r_\ell(b) u^{\ell+1}
  \end{equation}
  with $r_\ell(b)$ as in the proposition. Plugging this expression in
  the right-hand side of~\eqref{eq:alphalagburr}, we get
  \begin{equation}
    \begin{split}
      \alpha_{k,n}^{(b)}  &= [u^{n-k}] \frac1{\left(1 + \sum_{\ell=1}^\infty (-1)^\ell r_\ell(b) u^{\ell} \right)^{n+1}} \\
                          &= [u^{n-k}] \sum_{s=0}^\infty (-1)^s \binom{n+s}n \left( \sum_{\ell=1}^\infty (-1)^\ell r_\ell(b) u^{\ell} \right)^s
    \end{split}
  \end{equation}
  which gives the wanted formula~\eqref{eq:alphapolexp}.
\end{proof}

We list for bookkeeping purposes the following simple values for $k=n,n-1,n-2, n-3$:
\begin{itemize}
  \item $\alpha_{n,n}^{(b)} = 1$ for all $n$ ;
  \item $\alpha_{n-1,n}^{(b)} = \frac{n+1}{2} b(b-1)$ for $n \geq 1$, see Equation~\eqref{eq:alpha-n-1};
  \item $\alpha_{n-2,n}^{(b)} = \frac{n+1}{6} \left(\frac{3n+4}{4} b + 1\right)b(b-1)^2$ for $n \geq 2$, see Equation~\eqref{eq:alpha-n-2};
  \item $\alpha_{n-3,n}^{(b)} = \frac{n+1}{12} \left( \frac{3n^2 + 9n + 7}{12} b^3 
    - \frac{3n^2-3n-11}{12}b^2 - \frac{3n+2}{3}b - 1\right)b(b-1)^2$ for $n \geq 3$.
\end{itemize}

\subsection{Sequences of decorated trees}
\label{sec:blossenum}

The purpose of this section is to establish the following:
\begin{prop}
\label{prop:Fbkm}
  For integers $n,b\geq 1$, $k\geq 0$ and $m_1,\ldots,m_n \geq b$, the number of $(k+1)$-tuples of tight $b$-decorated trees having $n$ blossoming vertices of degrees $2m_1,\ldots,2m_n$, where the first tree contains the blossoming vertex of degree $m_1$, is equal to
  \begin{equation}
  \label{eq:Fbkm}
    F^{(b)}_k(m_1,\ldots,m_n) :=
    (n-1)! \sum_{k_1,\ldots,k_n \geq 0} q^{(b)}_{k_1}(m_1) \cdots q^{(b)}_{k_n}(m_n) \alpha^{(b)}_{k+k_1+\cdots+k_n,n-1}
  \end{equation}
  with $\alpha^{(b)}_{k,n}$ as in~\eqref{eq:alpha-k-n} and, recalling Equation~\eqref{eq:qkb},
  \begin{equation}
    \label{eq:qkbdef}
    q^{(b)}_{k}(m) := \binom{m+b}{k} \binom{m-b-1+k}{k} = \frac1{(k!)^2} \prod_{i=0}^{k-1} \left( m^2 - (b-i)^2 \right).
  \end{equation}
\end{prop}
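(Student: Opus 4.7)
The plan is to prove the formula by a two-step counting: a local count of tight configurations at each blossoming vertex yielding the factor $q^{(b)}_{k_i}(m_i)$, and a global count of the arrow-tree "skeleton" together with the matching of arrow-tree attaching points with blossoming-vertex roots.

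For the local count, I would show that $q^{(b)}_k(m)$ enumerates the tight arrangements of decorations around a single blossoming vertex of degree $2m$ carrying $k$ attaching points. Such an arrangement is a sequence of the $2m-1$ non-root decorations read clockwise from the root, consisting of $m-b-1$ twigs, $k$ attaching points, and $m+b-k$ leaflets, subject to the tightness constraint of Proposition~\ref{prop:tightchar} that no twig be immediately followed by a leaflet. This constraint is equivalent to requiring that each maximal run of twigs end either at the end of the sequence or immediately before an attaching point, giving $k+1$ admissible twig-run positions. I would count by first placing the $k$ attaching points among the $m+b$ non-twig slots ($\binom{m+b}{k}$ ways) and then distributing the $m-b-1$ indistinguishable twigs into the $k+1$ admissible positions ($\binom{m-b-1+k}{k}$ ways), whose product is $q^{(b)}_k(m)$. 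In particular $q^{(b)}_0(b)=1$ and $q^{(b)}_k(b)=0$ for $k\geq 1$, consistent with the fact that a special blossoming vertex carries no attaching point.

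For the global count, I would decompose a $(k+1)$-tuple of tight $b$-decorated trees as a collection of blossoming-vertex configurations (contributing $\prod_i q^{(b)}_{k_i}(m_i)$) together with an ordered tuple of simplified arrow trees and a matching to the blossoming vertices. A direct accounting of roots and attaching points—using that every decorated tree from a $0$-slice is rooted on a primal half-edge, hence on an arrow tree—shows that the tuple contains exactly $k+1+k_1+\cdots+k_n$ simplified $b$-arrow trees with $n$ total attaching points: the $k+1$ decorated-tree roots and the $k_1+\cdots+k_n$ blossoming-vertex attaching points each consume the root of one arrow tree, while the $n$ blossoming-vertex roots each consume one arrow-tree attaching point. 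I would then designate as the "first" arrow tree of the tuple the one containing the attaching point matched with the root of blossoming vertex~$1$ (with that attaching point playing the role of the distinguished one); this forces blossoming vertex~$1$ to sit in the first decorated tree, and by Definition~\ref{def:alphakn} the number of such ordered tuples is $\alpha^{(b)}_{k+k_1+\cdots+k_n,n-1}$. The remaining $n-1$ arrow-tree attaching points are then bijectively labeled by blossoming vertices $2,\ldots,n$, contributing the factor $(n-1)!$. Multiplying these factors and summing over all nonnegative $k_1,\ldots,k_n$ yields~\eqref{eq:Fbkm}.

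Equivalently, setting $Q^{(b)}(x;m) := \sum_{k\geq 0} q^{(b)}_k(m)\, x^k$ and denoting by $U_0(z)$ the generating function of simplified $b$-arrow trees with $z$ marking an attaching point, the argument expresses $F^{(b)}_k(m_1,\ldots,m_n) = (n-1)!\,[z^{n-1}]\,U_0'(z)\,U_0(z)^k\,\prod_{i=1}^n Q^{(b)}(U_0(z); m_i)$, which expands directly to the right-hand side of~\eqref{eq:Fbkm}. The main subtle point is that the canonical ordering of the non-first arrow trees in the tuple must be fixed so that the decomposition and its reverse are unambiguous; this may be accomplished by a depth-first traversal of each decorated tree's bipartite structure of arrow trees and blossoming vertices, starting from the distinguished arrow tree in the first decorated tree and then proceeding through the remaining decorated trees in their prescribed order. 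The case $b=1$ is treated analogously, using the convention $\alpha^{(1)}_{k,n}=\delta_{k,n}$ from~\eqref{eq:alpha-k-n-1-def} together with the direct connection rules described at the end of Section~\ref{sec:treecomponents}.
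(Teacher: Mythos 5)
Your proposal is correct and follows essentially the same approach as the paper: the word-encoding argument for $q^{(b)}_k(m)$ and the interpretation of $(n-1)!\,\alpha^{(b)}_{k+k_1+\cdots+k_n,n-1}$ as counting labeled tuples of simplified arrow trees with attaching points matched to blossoming-vertex roots are both the same. The one step you sketch rather than establish is the reassembly of the $k_0+1$ arrow-tree-plus-blossoming-vertex blocks into a $(k+1)$-tuple of decorated trees; the paper makes this precise via a cycle-lemma argument (an up step per block root, a down step per free blossoming-vertex attaching point, non-crossing matching on the cyclic word), whereas your ``depth-first traversal'' only describes the forward direction of the correspondence without verifying bijectivity.
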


By the correspondence between decorated trees and slices, we
immediately obtain the following:

\begin{cor}
  \label{cor:Fbkm}
  For integers $n,b\geq 1$, $k\geq 0$ and integers
  $m_1,\ldots,m_n \geq b$, the quantity $F^{(b)}_k(m_1,\ldots,m_n)$
  in~\eqref{eq:Fbkm} is the number of $(k+1)$-tuples
  $(\mathbf{s}_1,\ldots,\mathbf{s}_{k+1})$ of tight $2b$-irreducible
  $0$-slices such that there is a bijection between $\{1,\ldots,n\}$
  and the union of the sets of the inner faces of
  $\mathbf{s}_1,\ldots,\mathbf{s}_{k+1}$, such that
  each $j=1,\ldots,n$ is mapped to a face of degree $2m_j$, and $1$ is
  mapped to an inner face of $\mathbf{s}_1$. In particular, for $k=0$,
  the number of tight $2b$-irreducible $0$-slices with $n$ inner faces
  of degrees $2m_1,\ldots,2m_n$ is equal to
  \begin{equation}
    \label{eq:Fb0m}
    F^{(b)}_0(m_1,\ldots,m_n)  = (n-1)! \sum_{k_1,\ldots,k_n \geq 0} q^{(b)}_{k_1}(m_1) \cdots q^{(b)}_{k_n}(m_n) \alpha^{(b)}_{k_1+\cdots+k_n,n-1}.
  \end{equation}
\end{cor}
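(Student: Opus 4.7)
My plan is to combine a local count at each blossoming vertex (giving the factors $q^{(b)}_{k_i}(m_i)$) with a global count of arrow tree sequences (giving $\alpha^{(b)}_{K, n-1}$ together with the $(n-1)!$ factor), via a canonical decomposition of tight $b$-decorated trees.

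For the local count, consider a labeled blossoming vertex of degree $2m$ (with $m \geq b+1$) having $k$ attaching points. Its decoration, read clockwise from the root, comprises $m-b-1$ twigs, $k$ attaching points, and $m+b-k$ leaflets, subject to the tightness condition of Proposition~\ref{prop:tightchar} forbidding a twig from being immediately followed by a leaflet. I would first arrange the $k$ attaching points among the $m+b-k$ leaflets in $\binom{m+b}{k}$ ways, and then insert the $m-b-1$ indistinguishable twigs into the gaps of this linear sequence; the forbidden gaps are precisely those immediately preceding a leaflet, so the allowed gaps are the $k$ ones preceding an attaching point together with the final gap, giving $k+1$ positions and $\binom{m-b-1+k}{k}$ distributions by stars and bars. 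The product equals $q^{(b)}_k(m)$, and the special-vertex case $m = b$ (which forces $k=0$) yields $q^{(b)}_0(b) = 1$.

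For the global count, I fix $k_1, \ldots, k_n$ and establish a bijection between $(k+1)$-tuples of tight $b$-decorated trees (with $V_1$ in the first tree) satisfying these counts, and triples consisting of: (a) an ordered $(K+1)$-tuple of simplified arrow trees, with $K = k + \sum_i k_i$, totalling $n$ attaching points and with one distinguished in the first tree---counted by $\alpha^{(b)}_{K, n-1}$; (b) tight configurations at the $V_i$, contributing $\prod_i q^{(b)}_{k_i}(m_i)$; and (c) a permutation in $S_{n-1}$ fixing the positions of $V_2, \ldots, V_n$ in a canonical ordering, contributing $(n-1)!$. The forward map arises from a canonical traversal starting at the arrow tree immediately above $V_1$---making its attaching point to $V_1$ the distinguished one---then descending into $V_1$'s subtree depth-first, then climbing above $V_1$ through the rest of decorated tree $1$, and finally visiting decorated trees $2, \ldots, k+1$ in standard pre-order. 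The inverse matches iteratively: $V_1$ is placed at the distinguished attaching point; the next $k_1$ arrow trees in the sequence attach to $V_1$'s clockwise-ordered attaching points; each such arrow tree's attaching points receive subsequent blossoming vertices in the permuted order; when $V_1$'s subtree is exhausted, the next unmatched blossoming vertex attaches above the current arrow tree (unless its root closes the current decorated tree), and so on.

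The main obstacle will be verifying that this reconstruction is well-defined and produces exactly $k+1$ decorated trees. This amounts to a balance (or cycle-lemma-style) argument: tracking a walk that steps by $a_j - 1$ at each arrow tree (where $a_j$ is its number of attaching points) and by $k_j - 1$ at each blossoming vertex, one verifies that the identities $\sum_j a_j = n$ and $\sum_j k_j = K - k$ imply the walk closes correctly, so that exactly $k+1$ arrow tree roots end up as decorated tree roots. Summing over all admissible $(k_1, \ldots, k_n)$ finally yields~\eqref{eq:Fbkm}.
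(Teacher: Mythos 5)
Your proof is essentially the paper's: it proves the statement at the level of $b$-decorated trees (which the paper isolates as Proposition~\ref{prop:Fbkm}) and then relies implicitly on the slice--tree correspondence to transfer to $0$-slices, which is exactly how the corollary follows in the paper. The local count at a blossoming vertex---first arranging attaching points among leaflets, then inserting twigs into the $k+1$ allowed gaps (before an attaching point or at the end)---is the same calculation that the paper phrases via words $L^{i_0}T^{j_0}A\cdots A L^{i_k}T^{j_k}$ over $\{A,L,T\}$ avoiding the pattern $TL$; both give $\binom{m+b}{k}\binom{m-b-1+k}{k}=q_k^{(b)}(m)$. The global count also uses the same ingredients: $(n-1)!$ from numbering/permuting the non-distinguished attaching points, $\alpha^{(b)}_{K,n-1}$ with $K=k+\sum k_i$ for the arrow-tree sequence, and a canonical reassembly that must be shown to yield exactly $k+1$ decorated trees. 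You flag this last step as ``a balance (cycle-lemma-style) argument'' without carrying it out; the paper executes it precisely by encoding each component as one up step followed by one down step per free attaching point (so $K+1$ ups and $K-k$ downs), reading this sequence cyclically, and matching down steps to following up steps by non-crossing arches, leaving $k+1$ unmatched ups which become the roots of the decorated trees. If you want your sketch to become a complete proof, that is the step to make rigorous; as it stands it is the only part not fully justified, and your DFS-traversal description of the inverse would need to be checked against exactly this cyclic matching to confirm well-definedness.
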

Recall that  $\alpha_{k,n}^{(b)}$ vanishes for $k>n$ hence the sums in \eqref{eq:Fbkm}
and \eqref{eq:Fb0m} are \emph{finite} sums.
\medskip

The first step in the proof of Proposition \ref{prop:Fbkm} consists in observing that the quantity $q^{(b)}_{k}(m)$, which is a polynomial in $m^2$ and $b$, counts the number of possible decorations around a blossoming vertex of degree $2m$ with $k$ attaching points satisfying the tightness condition of Proposition \ref{prop:tightchar}. Indeed, for $m=b$ we have $q^{(b)}_{k}(b)=\delta_{k,0}$ as wanted for a special vertex which by definition has $0$ attaching point. For $m>b$, each decoration is coded by a word of length $2m-1$ over the alphabet $\{A,L,T\}$ (where these letters stand for attaching point, leaflet and twig respectively) with $k$ occurrences of $A$, $m+b-k$ occurrences of $L$ and $m-b-1$ occurrences of $T$, and no occurrence of the pattern $TL$. Such a word has the form
\begin{equation}
  L^{i_0} T^{j_0} A L^{i_1} T^{j_1} A \cdots A L^{i_k} T^{j_k} 
\end{equation}
where $i_0,i_1,\ldots,i_k$ are non-negative integers summing to $m+b-k$ and $j_0,j_1,\ldots,j_k$ are non-negative integers summing to $m-b-1$. There are $\binom{m+b}{k}$ choices for the former and $\binom{m-b+k-1}{k}$ for the latter, leading to the expression \eqref{eq:qkbdef}.

\medskip
The second step of the proof consists of the following:
\begin{lem}
  Let $n,b,k,m_1,\ldots,m_n$ be as in Proposition \ref{prop:Fbkm} and fix non-negative integers $k_1,\dots,k_n$. Then,
  the number of tuples of decorated trees as in Proposition \ref{prop:Fbkm} where we add the requirement that, for each $i=1,\ldots,n$, the $i$-th blossoming vertex has exactly $k_i$ attaching points, is equal to
$(n-1)! q^{(b)}_{k_1}(m_1) \cdots q^{(b)}_{k_n}(m_n) \alpha^{(b)}_{k+k_1+\cdots+k_n,n-1}$.
\end{lem}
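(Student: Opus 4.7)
The plan is to establish the lemma by a bijective argument in two steps. First, I fix the decorations around each blossoming vertex: by the alphabet argument given just above the lemma, a blossoming vertex of degree $2m_i$ with $k_i$ attaching points admits $q^{(b)}_{k_i}(m_i)$ tightness-compatible decorations. Since these choices are independent across blossoming vertices, they contribute $\prod_{i=1}^n q^{(b)}_{k_i}(m_i)$. It then remains to count the \emph{backbones}, i.e., the arrangements of arrow trees with pre-decorated blossoming vertices forming a $(k+1)$-tuple of decorated trees with vertex $1$ in the first tree.

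For the backbone count I first note a structural identity. Setting $K := k_1+\cdots+k_n$, each backbone has exactly $K+k+1$ arrow trees and $n$ total arrow-tree attaching points, because the $n$ arrow-tree attaching points match the $n$ blossoming-vertex roots, the $K$ blossoming-vertex attaching points match $K$ arrow-tree roots, and the remaining $k+1$ arrow-tree roots are free (they form the $k+1$ decorated-tree roots). The target expression $(n-1)!\,\alpha^{(b)}_{K+k,n-1}$ suggests establishing a bijection between backbones and pairs $((\mathbf{T},A^\star),\varphi)$, where $\mathbf{T}=(T_1,\ldots,T_{K+k+1})$ is an ordered tuple of simplified $b$-arrow trees with $n$ attaching points and $A^\star$ is a distinguished attaching point in $T_1$ (jointly counted by $\alpha^{(b)}_{K+k,n-1}$), while $\varphi$ is a bijection from $\{2,\ldots,n\}$ to the $n-1$ non-distinguished attaching points listed in a canonical planar order induced by $\mathbf{T}$ and $A^\star$ (contributing $(n-1)!$).

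To exhibit the bijection, in the forward direction I would identify the arrow tree $T^\star$ whose attaching point is glued to the root of blossoming vertex $1$, declare $T^\star$ to be $T_1$ with this attaching point as $A^\star$, and order the other arrow trees by a depth-first traversal starting at $T^\star$ (exploring its half-edges in planar cyclic order from $A^\star$, including its own root direction), then continuing through the remaining decorated trees rooted at their respective root arrow trees. The permutation $\varphi$ is read off as the labels of the blossoming vertices visited in DFS order after vertex $1$. The inverse reconstruction runs the same DFS on the tuple data, placing vertex $1$ at $A^\star$ and vertex $\varphi(i)$ at the $i$-th subsequent attaching point, and gluing arrow-tree roots to blossoming-vertex attaching points in DFS order. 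The main obstacle I foresee is that $T^\star$ need not be the root arrow tree of the first decorated tree; the DFS then has to explore $T^\star$'s root half-edge ``upward'' into the part of the first decorated tree lying above $T^\star$. This causes no ambiguity in the inverse, however, because the root arrow tree of each decorated tree is recognized as the unique arrow tree in its component whose root half-edge is free, so that the correct degree-one edge-vertex serving as the decorated-tree root can always be located.
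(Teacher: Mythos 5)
Your high-level decomposition mirrors the paper's: the decoration count $\prod_i q^{(b)}_{k_i}(m_i)$ is exactly the paper's first step, and you correctly identify that the remaining task is to show the ``backbone'' count equals $(n-1)!\,\alpha^{(b)}_{K+k,n-1}$ with $K=k_1+\cdots+k_n$.

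The gap lies in your bijection for the backbone count, specifically in the inverse direction. Your argument that the inverse ``causes no ambiguity\ldots because the root arrow tree of each decorated tree is recognized as the unique arrow tree in its component whose root half-edge is free'' is circular: in the inverse you do not yet \emph{have} the components --- you are trying to build them --- so you cannot appeal to the components to decide which arrow-tree roots are free. More concretely, when your DFS from $A^\star$ reaches the root of $T_1$, you need to know whether that root is the decorated-tree root (stop) or is glued to a blossoming-vertex attaching point somewhere ``above'' $T_1$ (continue). This decision is not determined by the tuple data $(\mathbf{T},A^\star,\varphi)$ alone under the rule you describe, precisely because your DFS is launched from an attaching point rather than from the decorated-tree root, which breaks the standard stack-based reversibility of preorder traversal. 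The paper sidesteps this entirely by treating the $(K+k+1)$ pre-decorated components \emph{cyclically}: encode each component as one up step followed by as many down steps as it has free blossoming-vertex attaching points, then apply the canonical cyclic arch matching (a cycle-lemma argument) which connects each down step to the nearest following up step. This produces $k+1$ unmatched up steps --- exactly the decorated-tree roots --- with no need to decide up front which roots are free, and the linearization by ``blossoming vertex $1$ in the first tree'' makes the map bijective. You would need to replace your DFS step by this cyclic-matching argument (or otherwise specify a non-circular rule for partitioning the tuple into components) to complete the proof.
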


\begin{proof}
This results from general considerations on the enumeration of plane forests with labeled vertices, similar to those developed in \cite[Appendix A]{polytightmaps}.

Precisely, we first observe that for any $k_0\geq 0$, the quantity $(n-1)!\alpha^{(b)}_{k_0,n-1}$ is the number of $(k_0 + 1)$-tuples of simplified $b$-arrow trees with a total of $n$ attaching points, that are \emph{numbered} from $1$ to $n$, the one numbered $1$ being in the first arrow tree.
Indeed, in $\alpha^{(b)}_{k_0,n-1}$ there is a unique distinguished attaching point which is in the first tree, which we number $1$, and we have $(n-1)!$ ways to number the other attaching points.
Now, we take $k_0= k+k_1+\cdots+k_n$, and for each $i=1,\ldots,n$ we choose a decoration for the $i$-th blossoming vertex in one of the $q^{(b)}_{k_i}(m_i)$ ways, and attach the root of that blossoming vertex to the arrow tree attaching point numbered $i$.
At this stage, we no longer have free attaching points incident to arrow trees, but we still have $k_1+\cdots+k_n=k_0-k$ free attaching points incident to blossoming vertices, while the number of trees is still $k_0+1$.
There is then a canonical way to assemble these trees into a sequence of $(k_0+1) - (k_0-k) = k+1$ decorated trees with the first tree containing the first blossoming vertex.
Indeed, we represent each of the $k_0+1$ tree by a sequence formed by a simple up step followed by a number of down steps equal to its number of free attaching points.
This yields a sequence of steps with $k_0+1$ up steps and $k_0 - k$ down steps.
We write this sequence cyclically, and connect each down step with the closest available following up step with non-crossing arches;
in the original trees, this corresponds to connecting the free attaching points to some of the roots.
This yields a cyclic sequence of $k+1$ decorated trees, which we break into a linear sequence by demanding that the first blossoming vertex be in the first tree. This is precisely a tuple of decorated trees of the wanted type.
\end{proof}

From this lemma, Proposition~\ref{prop:Fbkm} is deduced by summing
over all possible $k_1,\ldots,k_n$.

\subsection{Two-face maps with marked vertices}
\label{sec:unicycenum}

Having enumerated $(k+1)$-tuples
  of tight $2b$-irreducible
  $0$-slices in the previous section, we are now left with the counting
  of maps $\mathbf{m}_{12}$ with two faces of prescribed degrees, 
  and $(k+1)$ marked vertices.
More precisely, it is the purpose of the present section to establish the following:
\begin{prop}
  \label{prop:pcmm}
  Let $k,c,m_1,m_2$ be non-negative integers with $m_1,m_2>c$. Then,
  the number $p^{(c)}_k(m_1,m_2)$ of tight planar maps with exactly
  two (labeled) faces of respective degrees $2m_1$ and $2m_2$, with
  $(k+1)$ among their vertices marked, one of them being distinguished,
  and with their unique cycle of length at least $2(c+1)$, is equal to 
  \begin{equation}
    \label{eq:pcmm}
    p^{(c)}_k(m_1,m_2) = \sum_{\substack{k_1,k_2 \geq 0 \\ k_1+k_2=k}} p^{(c)}_{k_1}(m_1)q^{(c)}_{k_2}(m_2)
  \end{equation}
  with $q^{(c)}_{k_2}(m_2)$ as in~\eqref{eq:qkb} or \eqref{eq:qkbdef} with $b,k,m \to c,k_2,m_2$, namely
  \begin{equation}
    \label{eq:qkcdef}
        q^{(c)}_{k_2}(m_2) = \binom{m_2+c}{k_2} \binom{m_2-c-1+k_2}{k_2} = \frac1{(k_2!)^2} \prod_{i=0}^{k_2-1} \left( m_2^2 - (c-i)^2 \right)
  \end{equation}
  and $p^{(c)}_{k_1}(m_1)$ as in~\eqref{eq:pkb}, namely
  \begin{equation}
    \label{eq:pkcdef}
    p^{(c)}_{k_1}(m_1) := \binom{m_1-c-1}{k_1} \binom{m_1+c+k_1}{k_1} = \frac1{(k_1!)^2} \prod_{i=1}^{k_1} \left( m_1^2 - (c+i)^2 \right).
  \end{equation}
\end{prop}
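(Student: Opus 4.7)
The plan is to enumerate tight two-face maps by decomposing them into separate face-1 and face-2 contributions, in the spirit of the blossoming-vertex encoding of Section~\ref{sec:blossenum}.

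I first use the structural fact that a planar two-face map consists of a unique cycle $\gamma$ of even length $2\ell$ (separating the two faces) with planar forests attached on each side. The proposition's hypotheses translate to $c+1\le\ell\le\min(m_1,m_2)$; the inside forest (on the face-1 side) has $m_1-\ell$ edges, the outside forest (on the face-2 side) has $m_2-\ell$ edges, and tightness requires every leaf of either forest to be marked.

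I then plan to encode each face's contribution by a boundary-walk word over $\{A,L,T\}$, modeled on the blossoming-vertex encoding of Section~\ref{sec:blossenum}. For face 2, the word should have length $2m_2-1$, with $k_2$ A's, $m_2+c-k_2$ L's, $m_2-c-1$ T's, and forbidden pattern TL (encoding tightness); the block decomposition already performed in Section~\ref{sec:blossenum} (writing each word as $L^{i_0}T^{j_0} A L^{i_1}T^{j_1}\cdots A L^{i_{k_2}}T^{j_{k_2}}$) yields exactly $q^{(c)}_{k_2}(m_2)$. For face 1, a parallel encoding should yield words of length $2m_1-1$ with $k_1$ A's, $m_1-c-1-k_1$ L's, $m_1+c$ T's, and the same forbidden pattern. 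Applying the identical block decomposition then gives directly $\binom{m_1-c-1}{k_1}\binom{m_1+c+k_1}{k_1}=p^{(c)}_{k_1}(m_1)$. The swap of L and T counts between $p$ and $q$ will reflect the opposite orientations of the two face boundary walks around $\gamma$, together with the convention of placing the distinguished marked vertex on the face-1 side.

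The central technical step will then be to establish a bijection between tight two-face maps satisfying the proposition's hypotheses and pairs of admissible words as above, via gluing of the two face walks along the shared cycle $\gamma$. The cycle length $2\ell$ is recovered from the T counts of the two words, and the constraint $\ell\ge c+1$ is automatically enforced by the non-negativity of the letter counts (which is precisely the hypothesis $m_1,m_2>c$). Summing over all splits $k=k_1+k_2$ of the $k$ non-distinguished marked vertices between the two faces then gives the claimed product formula~\eqref{eq:pcmm}.

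The main obstacle will be to specify the boundary-walk encoding precisely, particularly for face 1 where the distinguished marked vertex plays a canonical role, and to verify the bijectivity of the gluing. This requires careful handling of the cyclic symmetry of $\gamma$ (presumably by using the distinguished marked vertex to canonically root the construction), as well as of the degenerate cases where one of the forests is empty (i.e.\ $\ell=m_i$) or where the two faces have equal degree.
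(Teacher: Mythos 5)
Your word encodings and block decompositions are exactly the paper's (modulo the relabeling $M\to A$, $D\to L$, $U\to T$), so the counting that produces $p^{(c)}_{k_1}(m_1)$ and $q^{(c)}_{k_2}(m_2)$ is fine. The gap is in the bijection between pairs of words and two-face maps, and it is a genuine one.

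First, your claim that ``the cycle length $2\ell$ is recovered from the $T$ counts of the two words'' cannot be right: the $T$ counts are fixed at $m_1+c$ and $m_2-c-1$ by the letter budgets, hence are the same for every admissible word, while the cycle length is a variable statistic ranging over $c+1 \le \ell \le \min(m_1,m_2)$. In the paper the cycle length is not a letter count at all; it is determined by the \emph{minimum heights} $-\ell_1$ and $-\ell_2$ of the two lattice paths associated to the words. These minima govern how each word decomposes into a sequence of Dyck blocks (trees on a spine), and the resulting spine lengths $2\ell_1+2c+1$ and $2\ell_2-2c-1$ generically disagree. The second missing ingredient is the \emph{zipping} operation that reconciles this disagreement: the longer spine is folded onto itself so the two spines match, and only then are they glued along a cycle of length $2\min(\ell_1+c+1,\ell_2-c)$. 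Because $\ell_2 \ge 2c+1$ (forced by the height difference $-(2c+1)$ of the second path) and $\ell_1 \ge 0$, this cycle length is automatically at least $2(c+1)$; that is where the constraint comes from, not from nonnegativity of letter counts. Third, the distinguished marked vertex is the endpoint of the zip, so it lands on the face-$1$ side or the face-$2$ side depending on which of $\ell_1+2c+1$ and $\ell_2$ is larger; your ``convention of placing the distinguished marked vertex on the face-$1$ side'' is not what makes the construction work, and asserting it would break bijectivity.

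In short: you have the right building blocks and the right target counts, but as written the map from word-pairs to two-face maps is unspecified at exactly the point where the content lies (encoding the variable cycle length, and making the gluing reversible). Your own ``main obstacle'' paragraph correctly identifies this as the crux; the resolution is the lattice-path/Dyck-block reading of the words together with the zipping construction, which your outline does not supply.
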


Note that, borrowing from \cite{polytightmaps} the notation
\begin{equation}
  p_{k,e}(m) := \binom{m+\frac{e}{2} -1}{k} \binom{m - \frac{e}{2} +k}{k} = \frac{1}{(k!)^2} \prod_{i=1}^{k}(m^2 - (i-\frac{e}{2})^2)
\end{equation}
for $k \geq 0, e \in \mathbb Z$, we have
$p^{(c)}_{k}(m) = p_{k, -2c}(m)$ and
$q^{(c)}_{k}(m) = p_{k, 2c+2}(m)$.

We emphasize that $p^{(c)}_k(m_1,m_2)$ counts two-face planar maps
whose unique cycle has length \emph{at least} $2(c+1)$. A first idea
would be to count maps for which the unique cycle has a \emph{fixed}
length, and then perform a summation. While this may be done, see
Remark~\ref{rem:fixedcyclelength} below, it is actually not the most
direct route to the expression~\eqref{eq:pcmm}. Instead, we adapt the
approach of~\cite[Sections~4.2 and 5.2]{polytightmaps} which relies on
a bijection between two-face maps and pairs of sequences of
trees with possibly different lengths.

\label{sec:gluingtwoforests}
\begin{figure}[t]
  \centering
  \includegraphics[width=0.9\textwidth]{./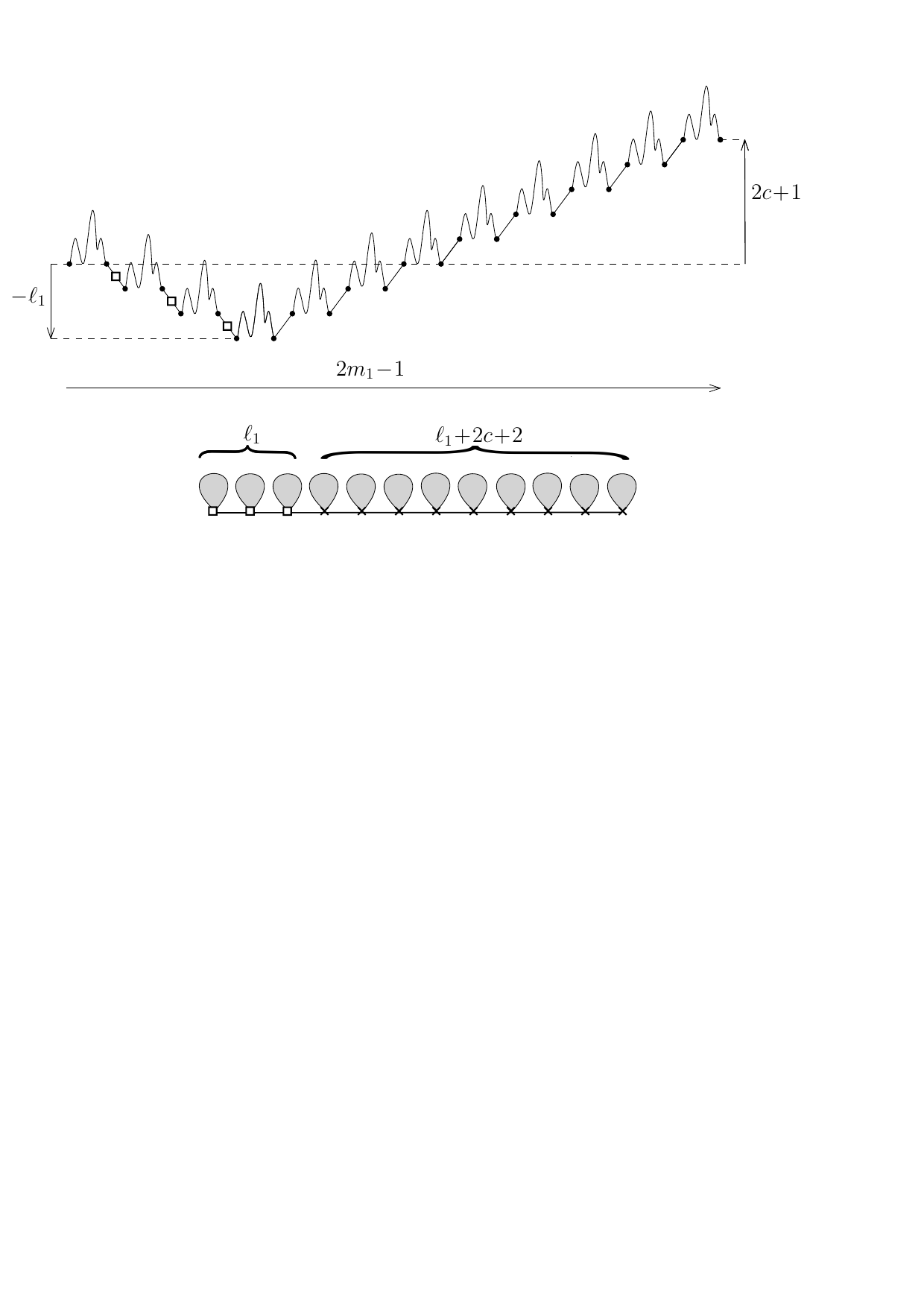}
  \caption{Top: a schematic picture of a word of the form
    \eqref{eq:UMD}, viewed equivalently as a lattice path with height
    difference $2c+1$ (here $c=2$). Calling $-\ell_1$ the minimum
    height (here $\ell_1=3$), we may decompose this path into
    $2\ell_1+2c+2$ blocks (Dyck paths) separated by $\ell_1$ down
    steps, which are \emph{markable} (as indicated by squares) and
    $\ell_1+2c+1$ up steps. Bottom: alternatively, this codes for a
    sequence $\mathcal S_1$ of $2\ell_1+2c+2$ rooted plane trees
    (displayed as gray blobs connected by a \emph{spine} of
    edges). The roots of the first $\ell_1$ trees can be marked or not
    (as indicated by squares).  The roots of the last $\ell_1+2c+2$
    trees are not marked (as indicated by crosses).  }
  \label{fig:pathstoforests1}
\end{figure}
We first need to enumerate sequences of rooted plane
trees, with some vertices marked, such that every leaf is marked.
Such sequences are best encoded by concatenating Dyck paths with marked
steps, where the marking of the leaves is enforced by forbidding a
certain pattern.  The sequences of such Dyck paths are themselves in
one-to-one correspondence with appropriate words over a three-letter
alphabet avoiding some pattern.

More precisely, take integers $c,k_1\geq 0$ and $m_1\geq c+1$,
and consider words of length $2m_1-1$ over the alphabet $\{M,D,U\}$
(where these letters stand for marked-down, down and up respectively)
with $k_1$ occurrences of $M$, $m_1-c-1-k_1$ occurrences of $D$ and $m_1+c$ occurrences of $U$,
and with no occurrence of the pattern $UD$. Any such word has the form
\begin{equation}
\label{eq:UMD}
  D^{i_0} U^{j_0} M D^{i_1} U^{j_1} M \cdots M D^{i_{k_1}} U^{j_{k_1}}
\end{equation}
where $i_0,i_1,\ldots,i_{k_1}$ are non-negative integers summing to $m_1-c-1-k_1$
and $j_0,j_1,\ldots,j_{k_1}$ are non-negative integers summing to $m_1+c$. 
There are $\binom{m_1-c-1}{k_1}$ choices for the former and $\binom{m_1+c+k_1}{k_1}$ for the latter,
leading to the total number of words $p^{(c)}_{k_1}(m_1)$ as in~\eqref{eq:pkcdef}.
Note that $p^{(c)}_{k_1}(m_1)=0$ for $k_1\geq m_1-c$ as it should. 

By interpreting the letter $U$ as coding for an elementary up step $(1,1)$,
and the letter $D$ (respectively $M$)
as coding for an elementary down step (respectively a \emph{marked} elementary down step) $(1,-1)$,
each word codes for a directed lattice path in $\Z^2$ starting at $(0,0)$,
of total length (number of steps) equal to $(m_1+c)+(m_1-c-1-k_1)+k_1=2m_1-1$,
and height difference (final ordinate) equal to $(m_1+c)-(m_1-c-1-k_1)-k_1=2c+1$.
This path is equipped with a total of $k_1$ markings on the down steps.
Calling $-\ell_1$ the minimal height of this path
(see Figure~\ref{fig:pathstoforests1}), with $\ell_1\geq 0$ by construction, 
we can decompose the path into $2\ell_1+2c+2$ blocks separated by the $\ell_1$ down steps
(that can be marked or not) which correspond to the first passage
at height $h$ for $h=-1,\ldots,-\ell_1$ followed by the $\ell_1+2c+1$ up steps\footnote{
that cannot be marked, as we only mark down steps.}
which correspond to the last passage at height $h$ for $h=-\ell_1,\ldots,2c$. 
Each block is a Dyck path made of an equal number of up and down steps which, as is well-known, is the \emph{contour path} of a rooted plane tree: recall that the contour path of a rooted plane tree with a total of $m$ non-root vertices is the path of length $2m$ obtained by going clockwise around the tree from its root and recording the distance to the root of the successive encountered corners. Each non-root vertex in the tree is in correspondence with a down step along the Dyck path (following the last passage at this vertex along the contour) and we may therefore transfer the markings of the down steps to their associated non-root vertices. 
The absence of the $UD$ pattern then guarantees that \emph{all the (non-root) leaves of the tree are marked}. As for the
root of the tree, we mark it if the block at hand is followed by a marked down step, which is possible only for the
$\ell_1$ first blocks in the decomposition.  Altogether, we arrive at the following:
\begin{prop}
  \label{prop:def_F1}
For fixed integers $c,k_1\geq 0$ and $m_1\geq c+1$, the number of sequences $\mathcal S_1$ made of $2\ell_1+2c+2$ rooted plane trees with a total of $m_1-(c+1)-\ell_1$ internal edges, for some (unfixed) value $\ell_1$
with $0\leq \ell_1 \leq m_1-(c+1)$ , with a total of $k_1$ marked vertices and such that all the (non-root) leaves of the trees are marked and the roots of the last $\ell_1+2c+2$ plane trees are not marked is given by 
$ p^{(c)}_{k_1}(m_1)$ as in \eqref{eq:pkcdef}.
\end{prop}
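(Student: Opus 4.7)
My plan is to prove Proposition~\ref{prop:def_F1} by turning the bijection sketched in the paragraphs above the statement into a formal one-to-one correspondence between the words of length $2m_1-1$ over the alphabet $\{M,D,U\}$ having $k_1$ occurrences of $M$, $m_1-c-1-k_1$ occurrences of $D$, $m_1+c$ occurrences of $U$, and no factor $UD$ on the one hand, and the sequences $\mathcal{S}_1$ of the statement on the other. The count of such words is already obtained as $p^{(c)}_{k_1}(m_1)$ via the canonical factorisation~\eqref{eq:UMD} by a direct stars-and-bars argument, so the entire task reduces to establishing this bijection.

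First, I will read each admissible word as a lattice path of length $2m_1-1$ starting at height $0$ and ending at height $2c+1$, with $U$ an up step and $D$ or $M$ a down step. Letting $-\ell_1$ be its minimum height, so that $0\le \ell_1 \le m_1-c-1$, I will isolate the $\ell_1$ down steps realising the first passage at heights $-1,\ldots,-\ell_1$ and the $\ell_1+2c+1$ up steps realising the last passage at heights $-\ell_1,\ldots,2c$. Removing these $2\ell_1+2c+1$ \emph{spinal} steps splits what remains into $2\ell_1+2c+2$ consecutive Dyck sub-paths of total length $(2m_1-1)-(2\ell_1+2c+1)=2(m_1-c-1-\ell_1)$. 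By the standard contour-path bijection, each Dyck sub-path encodes a unique rooted plane tree, and the concatenation yields the required sequence of $2\ell_1+2c+2$ trees with a combined total of $m_1-c-1-\ell_1$ edges.

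It then remains to transfer the letter $M$ into vertex markings. Each non-root vertex of a tree corresponds to the down step of its Dyck block traversed at the last visit to that vertex along the contour, and I will declare this vertex marked exactly when that step carries the letter $M$. The absence of the factor $UD$ in the word precisely forces every non-root leaf to be marked, since a leaf would otherwise appear as an unmarked $UD$ in the contour. Among the spinal steps, the $\ell_1$ down steps come first; if the $i$-th of them carries the letter $M$, I will mark the root of tree $i$, for $i=1,\ldots,\ell_1$. The remaining $\ell_1+2c+1$ spinal steps are up steps and hence carry no $M$, so the roots of the last $\ell_1+2c+2$ trees stay unmarked, as required. The inverse map is immediate: from a sequence $\mathcal{S}_1$ one reads the $2\ell_1+2c+2$ contour paths, interleaves them with the prescribed spinal pattern of $\ell_1$ down steps followed by $\ell_1+2c+1$ up steps, and records letters $D$ or $M$ on every down step according to whether the associated vertex is marked. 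The hard part is the bookkeeping: verifying that the $UD$-avoidance condition is exactly equivalent to tightness at every non-root leaf, and that the orientation of the spinal pattern produces precisely the asymmetric constraint that only the first $\ell_1$ of the $2\ell_1+2c+2$ roots may carry a marking. Once these two checks are in place, summing over the unspecified parameter $\ell_1$ (recovered from the minimum of the lattice path) completes the proof.
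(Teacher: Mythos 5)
Your proposal is correct and follows essentially the same approach as the paper: you encode each admissible $\{M,D,U\}$-word as a lattice path, decompose it using the first-passage down steps and last-passage up steps into a spine and $2\ell_1+2c+2$ Dyck blocks, and transfer the markings of down steps to non-root vertices (and of spinal down steps to roots of the first $\ell_1$ trees). The only point you flag as still needing verification — that $UD$-avoidance is exactly the marked-leaf condition and that the spinal up steps enforce the unmarked roots on the last $\ell_1+2c+2$ trees — is precisely what the paper states in the two sentences preceding the proposition, and your reasoning there is sound.
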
 

\begin{figure}[t]
  \centering
  \includegraphics[width=.7\textwidth]{./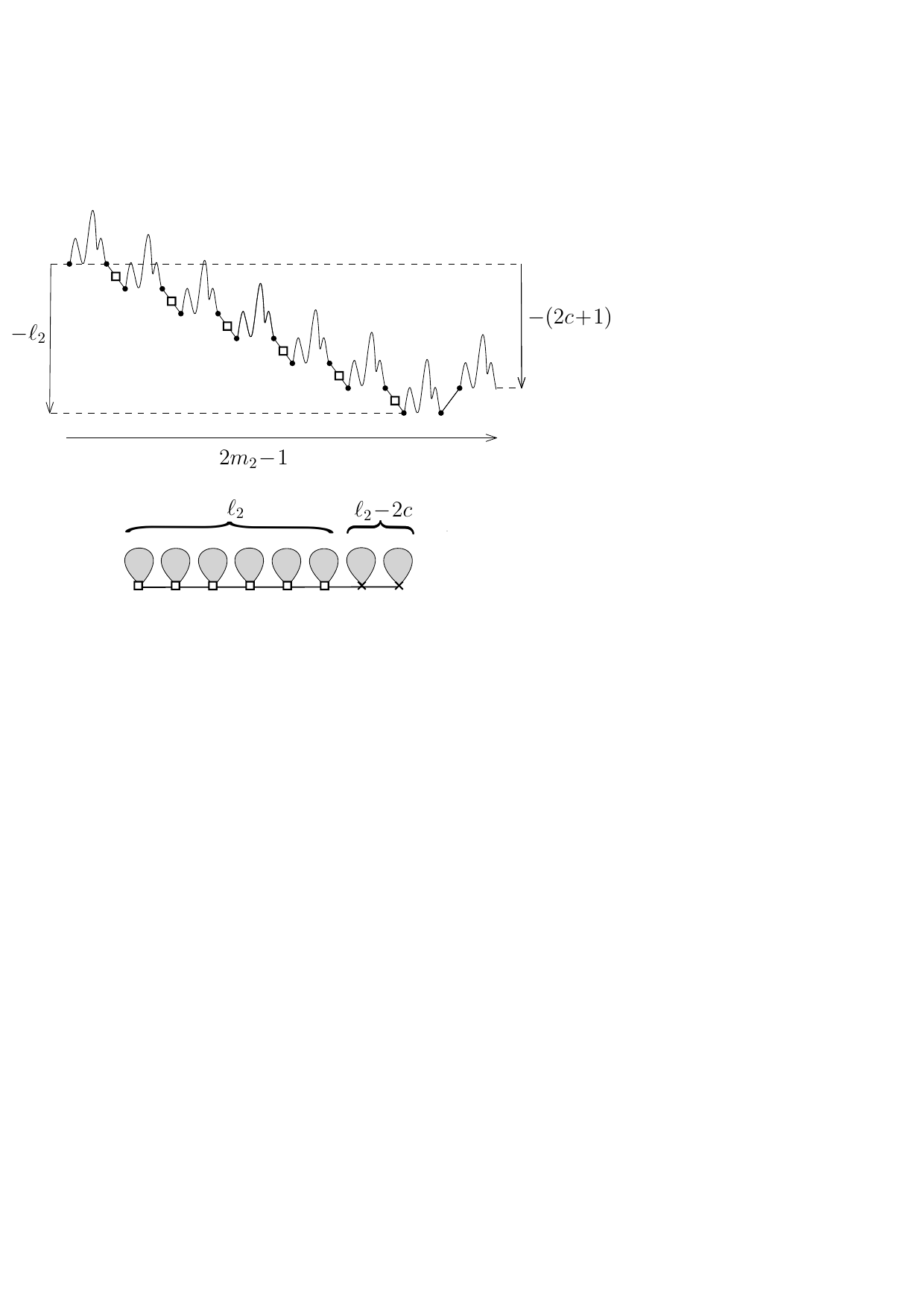}
  \caption{Top: a lattice path
  with height difference $-(2c+1)$ having minimal height $-\ell_2$ (with $\ell_2\geq 2c+1$, here $c=2$ and $\ell_2=6$). We may decompose this path as in Figure \ref{fig:pathstoforests1}, creating now $2\ell_2-2c$ blocks (Dyck paths). Bottom: alternatively, this codes for
 a sequence $\mathcal S_2$ of $2\ell_2-2c$ rooted plane trees (displayed as gray blobs connected by a spine of edges), where the roots of the first $\ell_2$
 trees are markable, while the roots of the last $\ell_2-2c$ are not.}
  \label{fig:pathstoforests2}
\end{figure}
Take now integers $c,k_2\geq 0$ and $m_2\geq c+1$ and consider words of length $2m_2-1$ over the alphabet $\{M,D,U\}$ with $k_2$ occurrences of $M$, $m_2+c-k_2$ occurrences of $D$ and $m_2-c-1$ occurrences of $U$, again with no occurrence of the pattern $UD$. As seen from the direct correspondence $M\to A$, $D\to L$, $U\to T$,
$c\to b$, $k_2\to k$ and $m_2\to m$ with the calculation presented in the proof of Proposition~\ref{prop:Fbkm},
the total number of words with the above requirement is now $q^{(c)}_{k_2}(m_2)$ as in~\eqref{eq:qkcdef}.
Note that $q^{(c)}_{k_2}(m_2)=0$ for $k_2\geq m_2+c+1$ as it should. 
Each word now codes for a lattice path of total length $2m_2-1$ and height difference $-(2c+1)$, see Figure~\ref{fig:pathstoforests2}. Calling $-\ell_2$ the minimal height of this path, with $\ell_2\geq 2c+1$ by construction, we can decompose the path into $2\ell_2-2c$ blocks which are Dyck paths coding for rooted plane trees.
As before, we transfer the markings of the down steps within a Dyck path to their associated non-root vertices in the associated plane tree. The absence of the $UD$ pattern then guarantees that \emph{all the (non-root) leaves of the trees are marked}. As for the root of the trees, we mark them if the block at hand is followed by a marked down step, which is possible only for the
$\ell_2$ first blocks in the decomposition.  Altogether, we arrive at the following:
\begin{prop}
  \label{prop:def_F2}
For fixed integers $c,k_2\geq 0$ and $m_2\geq c+1$, the number of sequences~$\mathcal S_2$ made of $2\ell_2-2c$ rooted plane trees with a total of $m_2+c-\ell_2$ internal edges, for some (unfixed) value $\ell_2$ with 
$2c+1\leq \ell_2 \leq m_2+c$ , with a total of $k_2$ marked vertices and such that all the (non-root) leaves of the trees are marked and the roots of the last $\ell_2-2c$ plane trees are not marked is given by 
$ q^{(c)}_{k_2}(m_2)$ as in \eqref{eq:qkcdef}.
\end{prop}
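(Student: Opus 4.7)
The plan is to mirror, step by step, the argument leading to Proposition~\ref{prop:def_F1}, transposing it to the parameters appropriate for the sequences $\mathcal{S}_2$. I would first count the words of length $2m_2-1$ over $\{M,D,U\}$ avoiding the pattern $UD$, with exactly $k_2$ letters $M$, $m_2+c-k_2$ letters $D$, and $m_2-c-1$ letters $U$. The unique factorization of such a word as $U^{i_0}D^{j_0}M\cdots MU^{i_{k_2}}D^{j_{k_2}}$, combined with the elementary computation already performed (up to the substitution $M\to A$, $D\to L$, $U\to T$, $c\to b$) in the paragraph preceding Proposition~\ref{prop:Fbkm}, gives exactly $\binom{m_2-c-1+k_2}{k_2}\binom{m_2+c}{k_2}=q^{(c)}_{k_2}(m_2)$, with the $i_\ell$'s summing to $m_2-c-1$ and the $j_\ell$'s summing to $m_2+c-k_2$.

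Next, I would interpret each such word as a lattice path from $(0,0)$, with $U$ an up-step, $D$ an unmarked down-step, and $M$ a marked down-step. The net vertical displacement is $(m_2-c-1)-(m_2+c)=-(2c+1)$, and the minimum height $-\ell_2$ satisfies $2c+1\leq \ell_2\leq m_2+c$ (lower bound from the endpoint constraint, upper bound from the total number of down-steps). Cutting the path at the $\ell_2$ first-passage down-steps through heights $-1,\ldots,-\ell_2$ and at the $\ell_2-2c-1$ last-passage up-steps through heights $-\ell_2,\ldots,-(2c+2)$ produces $2\ell_2-2c-1$ cuts, hence $2\ell_2-2c$ Dyck-path blocks; the total Dyck length is $(2m_2-1)-(2\ell_2-2c-1)=2(m_2+c-\ell_2)$, which delivers the announced total of $m_2+c-\ell_2$ internal edges among the trees obtained by reading each block as the contour of a rooted plane tree.

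To finish, I would transfer every $M$ appearing inside a block to the corresponding non-root vertex of its tree, and every $M$ appearing as a cut (necessarily one of the $\ell_2$ first-passage down-steps) to the root of the following tree; this accounts for all $k_2$ markings. Since the remaining $\ell_2-2c-1$ cuts are up-steps and cannot be $M$'s, only the first $\ell_2$ roots are markable, so the last $2\ell_2-2c-\ell_2=\ell_2-2c$ roots are unmarked, as required. The forbidden pattern $UD$ corresponds precisely to an unmarked non-root leaf, since within a single Dyck block a $D$ immediately preceded by a $U$ encodes the last passage at such a leaf; hence avoiding $UD$ enforces that every non-root leaf is marked. The only slightly delicate point is the bookkeeping on the ascending side of the path—checking that after the final up-step cut the remaining segment is indeed a (possibly empty) Dyck block at depth $-(2c+1)$, and that the degenerate case $\ell_2=2c+1$ (no up-step cuts, giving exactly $2c+2$ blocks with only the last one having an unmarkable root) is covered correctly; both follow at once from tracking the heights at each cut, and the identity $(2\ell_2-2c-1)+2(m_2+c-\ell_2)=2m_2-1$ provides a useful cross-check.
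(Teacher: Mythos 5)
Your proof follows the paper's approach exactly: count the $UD$-avoiding words over $\{M,D,U\}$ with the prescribed letter frequencies, translate each word to a lattice path with height difference $-(2c+1)$, cut at the $\ell_2$ first-passage down-steps and the $\ell_2-2c-1$ last-passage up-steps, and read off the resulting $2\ell_2-2c$ Dyck blocks as the sequence of rooted plane trees, transferring markings as indicated. Two minor slips to correct in the write-up, neither of which affects the count: the unique factorization of a $UD$-avoiding word must be $D^{i_0}U^{j_0}M\,D^{i_1}U^{j_1}M\cdots M\,D^{i_{k_2}}U^{j_{k_2}}$ (the $D$-run must precede the $U$-run in each segment, else the forbidden $UD$ appears at the junction), and a marked down-step occurring as a cut should be transferred to the root of the tree it \emph{follows} (i.e.\ the preceding block), which is exactly the convention your own next sentence uses when concluding that only the first $\ell_2$ roots are markable.
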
 

We are now ready for the proof of Proposition~\ref{prop:pcmm}.
\begin{proof}[Proof of Proposition~\ref{prop:pcmm}]
As in \cite{polytightmaps}, we use the fact that a planar map with two faces can be built out of two sequences
of plane trees by sticking them together. Consider more precisely a sequence $\mathcal S_1$
of plane trees as defined in Proposition~\ref{prop:def_F1}
and call $2\ell_1+2c+2$ the length of this sequence, with $\ell_1\geq 0$.
Consider also a sequence $\mathcal S_2$ of plane trees as defined in Proposition~\ref{prop:def_F2}
and call $2\ell_2-2c$ the length of this sequence, with $\ell_2\geq 2c+1$.
As in Figure~\ref{fig:pathstoforests1} (resp.\ Figure~\ref{fig:pathstoforests2}),
we transform the sequence $\mathcal S_1$ (resp.\ $\mathcal S_2$) into a single connected
object by attaching the trees with a \emph{spine}
made of $2\ell_1+2c+1$ (resp. $2\ell_2-2c-1$) elementary edges.      

\begin{figure}
  \centering
  \includegraphics[width=.85\textwidth]{./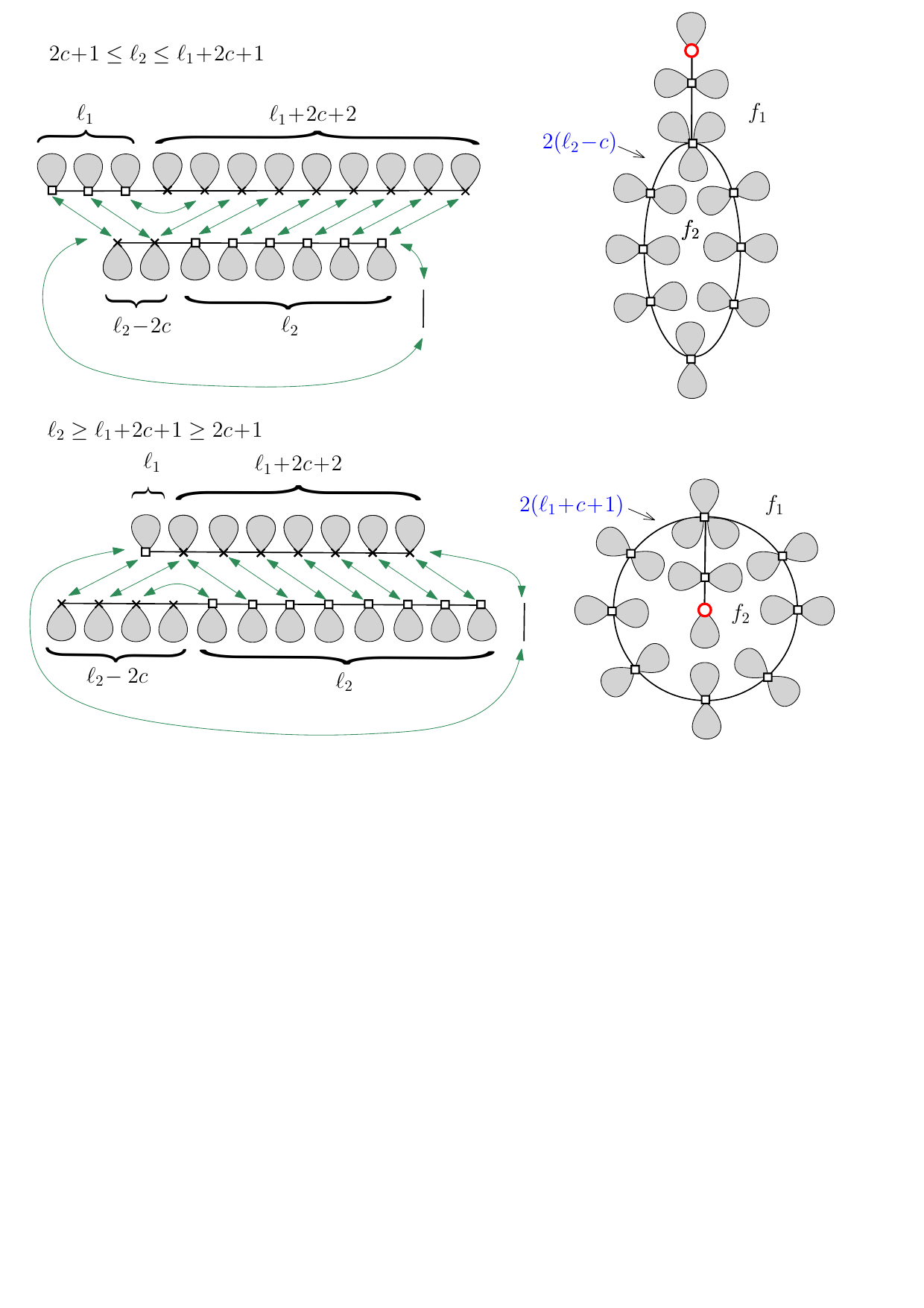}
  \caption{The gluing of two sequences of rooted trees into a two-face map (see text) according to whether
  $\ell_2$ lies between $2c+1$ and $\ell_1+2c+1$ (top) or is larger than or equal to $\ell_1+2c+1$ (bottom). The green arrows indicate which vertices to identify in the gluing process. In both case, the length of the separating loop (indicated in blue) is larger than or equal to $2(c+1)$.
  Squares indicate markable vertices and crosses non-markable ones.
  The red circle indicates the mandatory marking of the distinguished vertex.
  }
  \label{fig:twofaces}
\end{figure}
Assume first that $\ell_2\leq \ell_1+2c+1$ so that the spine of $\mathcal S_1$ is longer than (or of the same length as)
that of $\mathcal S_2$. We then shorten the spine of $\mathcal S_1$ by pulling up the $(\ell_1+1)$-th tree and by zipping the spine 
so as to attach the root of the $(\ell_1+1-j)$-th tree to that of the $(\ell_1+1+j)$-th one for $j=1,\ldots,\ell_1+2c+1-\ell_2$.
This creates a shorter spine of length $2\ell_2-2c-1$ with we may now glue ``face to face'' with the spine
of $\mathcal S_2$ and close it into a cycle of length $2(\ell_2-c)$ by adding an extra closing edge as shown in 
Figure~\ref{fig:twofaces}-top. The final result is a map with two faces $f_1$ and $f_2$ of 
respective degrees $2\times(m_1-(c+1)-\ell_1)+2\ell_1+2c+2=2m_1$ and $2\times(m_2+c-\ell_2)+2\ell_2-2c=2m_2$
with separating cycle of length $\mathcal{L}=2(\ell_2-c)$ with an additional distinguished marked vertex incident to $f_1$ corresponding to the endpoint of the zip, leading to a total of $k_1+k_2+1$ marked vertices (and all the leaves marked).
Note that the gluing procedure is such that a markable vertex is always glued to an unmarked one and the tip 
of the zip is initially not marked. These two conditions are crucial to ensure that all vertices are markable in the two-face map and that the construction is reversible without ambiguity as to which side to assign vertex markings after unzipping. Note also that the additional marking is needed to know where and how far to unzip.
Finally, since $\ell_2\geq 2c+1$, we deduce that $\mathcal{L}\geq 2(c+1)$.

Assume now that $\ell_2\geq \ell_1+2c+1$ so that the spine of $\mathcal S_1$ is shorter than (or of the same length as)
that of $\mathcal S_2$. We now shorten this latter spine by pulling down the $(\ell_2+1)$-th tree (counted from the right on Figure \ref{fig:twofaces}-bottom) and by zipping the spine so as to attach the root of the $(\ell_2+1-j)$-th tree to that of the $(\ell_2+1+j)$-th one for $j=1,\ldots,\ell_2-(\ell_1+2c+1)$. The two spines then have the same length $2\ell_1+2c+1$ and  we may again glue them and close the resulting segment into a cycle of length $2(\ell_1+c+1)$ by adding an extra closing edge. The final result is again a map with two faces $f_1$ and $f_2$ of 
respective degrees $2m_1$ and $2m_2$
with now a separating cycle of length $\mathcal{L}=2(\ell_1+c+1)$ with an additional distinguished marked vertex incident to the face $f_2$ (leading to a total of $k_1+k_2+1$ marked vertices).
Again the gluing procedure is such that a markable vertex is always glued to an unmarked one and the tip 
of the zip is initially not marked, as required to ensure that all vertices are markable in the two-face map and that the construction is reversible without ambiguity.
Finally, since $\ell_1\geq 0$, we deduce that, again, $\mathcal{L}\geq 2(c+1)$.

Note that if $\ell_2= \ell_1+2c+1$, both constructions are fully identical: this corresponds to the case where no zipping is necessary and the additional marked vertex is on the separating loop itself. 

This ends the proof of Proposition \ref{prop:pcmm}
by summing over $k_1$ and $k_2$ with $k_1+k_2=k$.
\end{proof}
\begin{figure}
  \centering
  \includegraphics[width=.78\textwidth]{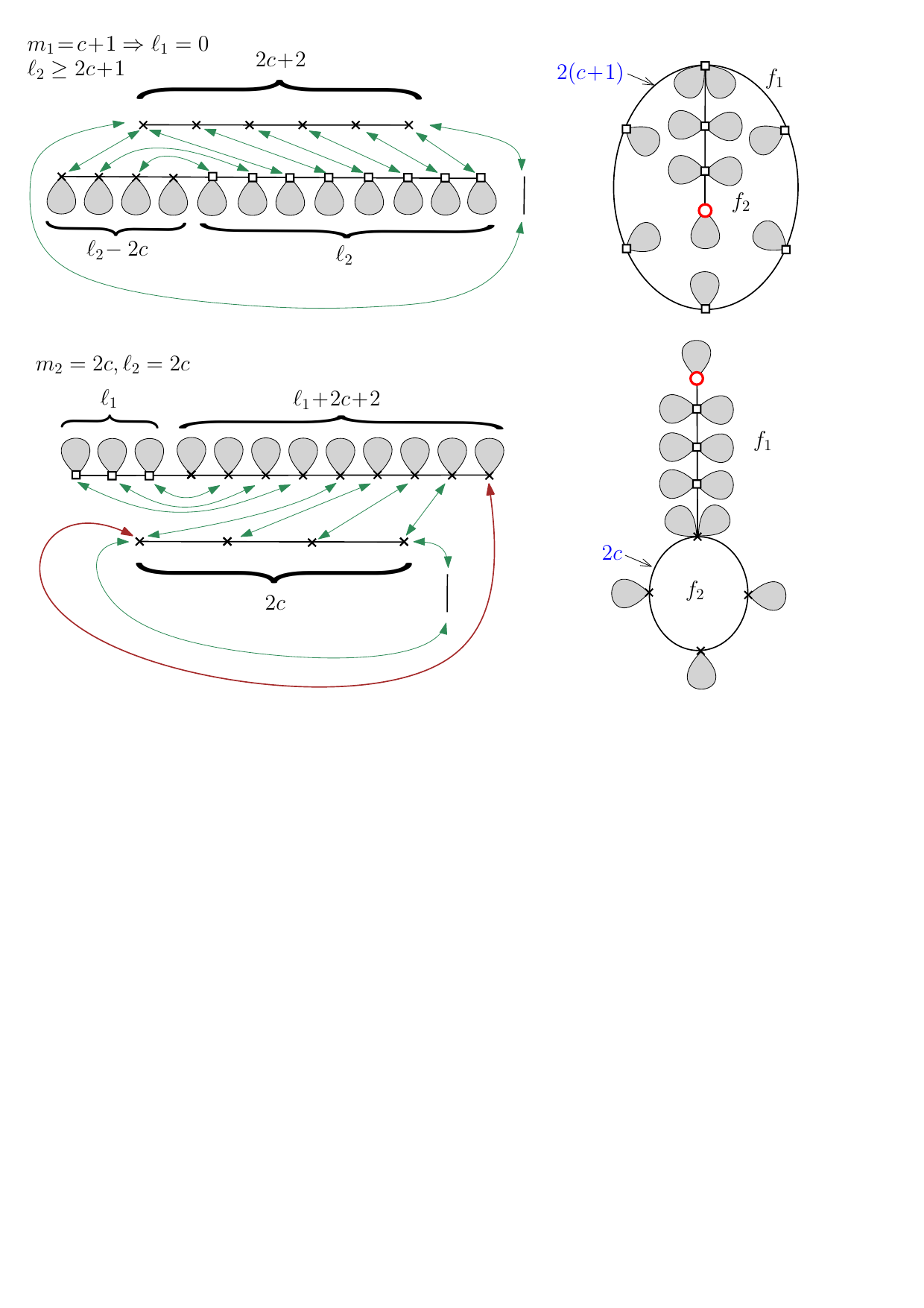}
  \caption{Top: limit case of the gluing of Figure~\ref{fig:twofaces}-bottom.
    Taking $m_1 = c+1$ implies that $\ell_1=0$ and yields a two-face map where the face $f_1$ is simple,
    and its incident vertices are all markable.
    This is counted by $q^{(c)}_{k}(m_2)$ if $f_2$ has degree $2m_2$ (and there is a total of $k+1$ markings, one being distinguished).
    Bottom: degenerate case $m_2=c$. The gluing now yields a two-face map where the face $f_2$ is simple,
    and its incident vertices \emph{are non-markable}.
    This degenerate case is not a particular case of
    the gluing of Figure~\ref{fig:twofaces}-top, since here the separating loop has length $2c$. The vertices of the empty spine are non-markable,
    and the zipping matches the rightmost tree of $\mathcal S_1$ with the leftmost vertex of $\mathcal S_2$ (see the brown arrow).
    This is counted by $p^{(c)}_{k}(m_1)$ if $f_1$ has degree $2m_1$ (and there is a total of $k+1$ markings, one being distinguished).}
      \label{fig:twofaces_limit}
\end{figure}

\paragraph{Remarks.}
We end this section with some remarks regarding the quantity $p^{(c)}_{k}(m_1,m_2)$.
\begin{rem}
  \label{rem:qkm_interpretation}
  In the case $m_1 = c+1$, we have $p^{(c)}_{k_1}(m_1) = \delta_{k_1,0}$,
  so that~\eqref{eq:pcmm} reduces, via $k_2 = k$,
  to 
  \begin{equation}
    \label{eq:pkccp1}
  p^{(c)}_{k}(c+1,m_2) = q^{(c)}_{k}(m_2) .
  \end{equation}
This gives a direct interpretation of $q^{(c)}_{k}(m_2)$ which can be recovered as follows:
  we already know that $q^{(c)}_{k}(m_2)$ counts the sequences $\mathcal S_2$ of Proposition~\ref{prop:def_F2}.
  The requirement $m_1=c+1$ forces that $\ell_1 = 0$ (recall that $0\leq \ell_1\leq m_1-(c+1)$), and that $\mathcal S_1$ is
  the \emph{empty spine} of length $2c+1$, composed of $2c+2$ non-markable vertices,
  with all the trees reduced to their root vertex.
  When zipping $\mathcal S_2$ to match it (see Figure~\ref{fig:twofaces_limit}, top),
  we end up with a map with a face of degree $2m_2$ and a \emph{simple} face of degree $2c+2$,
  where the cycle is also the contour of this simple face,
  and all the vertices of this cycle are markable.
  As before, this map has $k+1$ markings, one of them distinguished,
  and all its leaves are marked. 
\end{rem}

\medskip
\begin{rem}
  \label{rem:pkm_interpretation}
  Proposition~\ref{prop:pcmm} assumes that $m_1, m_2 > c$.
  In the case $m_2 = c$, we have $q^{(c)}_{k_2}(m_2) = \delta_{k_2,0}$,
  so that~\eqref{eq:pcmm} reads, via $k_1= k$, 
   \begin{equation}
   p^{(c)}_{k}(m_1,c) = p^{(c)}_{k}(m_1) .
   \end{equation}
  This leads to a \emph{new interpretation} of $p^{(c)}_{k}(m_1,m_2)$ when $m_2 = c$, 
  from the following direct interpretation of $p^{(c)}_{k}(m_1)$:
 we already know that $p^{(c)}_{k}(m_1)$ counts the sequences $\mathcal S_1$ of Proposition~\ref{prop:def_F1}.
  If we now replace $\mathcal S_2$ by the empty spine of length $2c-1$ (composed of $2c$ non-markable vertices
  with all the trees reduced to their root vertex),
  and zip $\mathcal S_1$ to match it (see Figure~\ref{fig:twofaces_limit}, bottom),
  we end up with a map  with a face of degree $2m_1$ and a \emph{simple} face of degree $2c$,
  where the cycle is also the contour of this simple face,
  and all the vertices of this cycle are now \emph{non-markable}.
  As always, this map has $k+1$ markings, one of them distinguished,
  and all its leaves are marked.
  This interpretation of $p^{(c)}_{k}(m_1)$, or equivalently of $ p^{(c)}_{k}(m_1,m_2)$ when $m_2=c$, will be useful in the next remarks,
  as well as for our main theorem~\ref{thm:main-result}.
\end{rem}

\medskip
\begin{rem}
\label{rem:sympkc}
  We have the relation
  \begin{equation}
    \label{eq:qktopkc}
    q_k^{(c)}(m) = \sum_{i=0}^{k} \binom{2c+1}i p_{k-i}^{(c)}(m)
  \end{equation}
  which can be understood as follows: according to Remark~\ref{rem:pkm_interpretation},  $p_{k-i}^{(c)}(m)$ counts two-face maps with a total of $k+1-i$ marked vertices, one distinguished, with a simple face $f_0$ of degree $2c$ having no incident marked vertex, the other face being of degree $2m$. In particular, the distinguished marked vertex is not incident to $f_0$ and the branch leading from $f_0$ to this vertex has no-zero length.
  We may unzip the first edge of this branch so as to obtain a larger simple face $f'_0$ of degree $2c+2$ and a shorter (by one edge) branch. None of the vertices incident to $f'_0$ are marked, except
  possibly the incident vertex at the beginning of the new branch leading to the distinguished vertex (note that this branch may be reduced to the distinguished vertex itself). If we now mark $i$ among the $2c+1$ other vertices incident to $f'_0$, which can be done in $\binom{2c+1}i$ ways, we end up with a two-face map with a simple face of degree $2c+2$, the other face being still of degree $2m$, with a total of
  $k+1$ marked vertices, one of them distinguished and exactly $i$ marked vertices along its unique cycle
  deprived of its vertex incident to the branch leading to the distinguished vertex. Summing over $i$, this enumerates 
  precisely maps counted by $q_k^{(c)}(m)$ according to Remark~\ref{rem:qkm_interpretation}.

  From \eqref{eq:qktopkc}, we get the alternative expression
  \begin{equation}
    p_k^{(c)}(m_1,m_2) = \sum_{\substack{k_1,k_2,i \geq 0 \\ k_1+k_2+i=k}} \binom{2c+1}{i} p_{k_1}^{(c)}(m_1) p_{k_2}^{(c)}(m_2).
  \end{equation}
  which displays that $p_k^{(c)}(m_1,m_2)$ is indeed symmetric in its two arguments as it should.
\end{rem}

\medskip
\begin{rem}
  \label{rem:fixedcyclelength}
  For bookkeeping purposes, let us mention another approach to
  computing $p^{(c)}_k(m_1,m_2)$. Let us denote by
  $o^{(d)}_k(m_1,m_2)$ the number of planar tight maps with exactly
  two (labeled) faces of respective degrees $2m_1$ and $2m_2$, with
  $(k+1)$ among their vertices marked, one of them being distinguished,
  and with their unique cycle of length \emph{exactly} $2d$. As
  $p^{(c)}_k(m_1,m_2)$ corresponds to two-face maps whose cycle has
  length at least $2(c+1)$, we have, for $m_1,m_2\geq c+1$,
  \begin{equation}
    \label{eq:pksumcycle}
    p^{(c)}_k(m_1,m_2) = \sum_{d \geq c+1} o^{(d)}_k(m_1,m_2).
  \end{equation}
  Then, in terms of the same univariate polynomials $p_k^{(c)}(m)$ and
  $q_k^{(c)}(m)$ as above, we have the expression
  \begin{multline}
    \label{eq:fixedcyclelength}
    o^{(d)}_k(m_1,m_2) = 
    \sum_{\substack{\kappa_1,\kappa_2 \geq 1 \\ \kappa_1+\kappa_2=k+1}} \frac{(2d)(\kappa_1+\kappa_2)}{\kappa_1 \kappa_2} p^{(d)}_{\kappa_1-1}(m_1) q^{(d-1)}_{\kappa_2-1}(m_2) \\
    + p^{(d)}_k(m_1) \delta_{m_2, d} + \delta_{m_1, d}q^{(d-1)}_k(m_2)
  \end{multline}
  which may be combinatorially interpreted as follows. Consider a map
  contributing to $o^{(d)}_k(m_1,m_2)$: 
  cutting along its unique cycle and filling the holes with new simple faces $\tilde f_1$ and $\tilde f_2$ of degree $2d$,
  we get, upon transferring the markings to face $f_2$:
  on one hand a two-face map $\mathcal Q$ with a simple face $\tilde f_1$ and the initial face $f_2$,
  and on the other hand a two-face map $\mathcal P$ with a simple face $\tilde f_2$ and the initial face $f_1$ without markings incident to the face $\tilde f_2$.
  Calling $ \kappa_1 $ and $\kappa_2 $ the total number of markings in $\mathcal Q$ and $\mathcal P$ respectively,
  those maps are the two types of maps considered respectively in Remarks~\ref{rem:qkm_interpretation} (with $c = d-1, k = \kappa_1-1$)
  and~\ref{rem:pkm_interpretation} (with $c = d, k=\kappa_2-1$),
  except we miss the distinguished marked vertex in one of those faces.
  After correcting this problem,
  since there are $2d$ ways to reassemble the maps $\mathcal P$ and $\mathcal Q$,
  this gives a number of possibilities equal to $(2d) p^{(d)}_{\kappa_1-1}(m_1) q^{(d-1)}_{\kappa_2-1}(m_2)$.
  Doing so, we obtain a two-face map with a pair of distinguished vertices (one incident to $f_2$, and one incident to $f_1$ but not $f_2$).
  Starting from a two-face map without distinguished vertices, there are $\kappa_1 \kappa_2$ ways to choose such pairs,
  and $\kappa_1+\kappa_2$ ways to choose one distinguished vertex in the whole map,
  so, to correct the above problem, we multiply the preceding expression by $\frac{\kappa_1+\kappa_2}{\kappa_1 \kappa_2}$.
  Summing over $\kappa_1 + \kappa_2 = k+1$ we get the first term in Equation~\eqref{eq:fixedcyclelength}.

  The extra terms on the second line
  correspond to the pathological situation where there is no marked
  vertex incident to one of the faces, forcing the corresponding
sequence of trees to contain only trees reduced to their root vertex. We have
  checked by computer algebra that the expressions~\eqref{eq:pcmm} and
  \eqref{eq:pksumcycle} match for the first values of $k$, and one might
  look for a general algebraic proof, besides the combinatorial proof
  that we sketched here.
\end{rem}

\subsection{The final result: enumeration of tight irreducible maps}
\label{sec:lutfin}

We are now ready to prove Theorem~\ref{thm:main-result} via the following:

\begin{prop}
  \label{prop:finalresult}
  For integers $n \geq 3$, $b,c\geq 1$, $m_1, m_2 \geq c+1$, and
  $m_3,\ldots,m_n\geq b$, the number of planar bipartite tight maps
  with $n$ labeled faces of respective degrees $2m_1,\ldots,2m_n$
  which are essentially $2b$-irreducible (as defined in
  Proposition~\ref{prop:tightdecomp}) and have separating girth at
  least $2(c+1)$ is equal to
  \begin{equation}
    \label{eq:lutfin}
    (n-3)! \sum_{k,k_3,\ldots,k_n \geq 0} p^{(c)}_k(m_1,m_2) q^{(b)}_{k_3}(m_3) \cdots q^{(b)}_{k_n}(m_n) \alpha^{(b)}_{k+k_3+\cdots+k_n,n-3}.
  \end{equation}
\end{prop}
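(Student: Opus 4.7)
The plan is to directly assemble the three main enumerative ingredients developed in the previous subsections via the slice decomposition. Specifically, I would apply the bijection of Proposition~\ref{prop:annulbij} to identify each planar bipartite map $\mathbf{m}$ with $n$ labeled faces of degrees $2m_1,\ldots,2m_n$ with a tuple $(\mathbf{m}_{12},\mathbf{s}_1,\ldots,\mathbf{s}_{k+1})$ for some $0 \leq k \leq n-3$. Then I would invoke Proposition~\ref{prop:tightdecomp} to translate the three imposed conditions into independent conditions on the pieces: $\mathbf{m}$ is tight iff $\mathbf{m}_{12}$ is tight (with its leaves, if any, marked) and each $\mathbf{s}_i$ is tight; $\mathbf{m}$ is essentially $2b$-irreducible iff each $\mathbf{s}_i$ is $2b$-irreducible; and the separating girth of $\mathbf{m}$ is at least $2(c+1)$ iff the unique cycle of $\mathbf{m}_{12}$ has length at least $2(c+1)$. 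Crucially, these conditions decouple completely: the properties of $\mathbf{m}_{12}$ (involving $m_1,m_2$ and the parameter $c$) and those of the slice tuple (involving $m_3,\ldots,m_n$ and the parameter $b$) can be enforced independently.

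Given the decoupling, the count is then the sum over $k$ of the product of two independent enumerations. For $\mathbf{m}_{12}$, by Proposition~\ref{prop:pcmm} the number of tight planar two-face maps with faces of degrees $2m_1, 2m_2$, with $k+1$ marked vertices (one distinguished) and unique cycle of length at least $2(c+1)$, is precisely $p^{(c)}_k(m_1,m_2)$. For the slice tuples, applying Corollary~\ref{cor:Fbkm} with $n$ replaced by $n-2$ (since the inner faces of the slices are indexed by $\{3,\ldots,n\}$, with face $3$ belonging to $\mathbf{s}_1$), the number of $(k+1)$-tuples of tight $2b$-irreducible $0$-slices with the prescribed face-degree assignment equals
\begin{equation}
  F^{(b)}_k(m_3,\ldots,m_n) = (n-3)! \sum_{k_3,\ldots,k_n \geq 0} q^{(b)}_{k_3}(m_3) \cdots q^{(b)}_{k_n}(m_n) \, \alpha^{(b)}_{k+k_3+\cdots+k_n,\, n-3}.
\end{equation}
Multiplying these two quantities and summing over $k \geq 0$ (the sum is in fact finite: $\alpha^{(b)}_{\cdot,n-3}$ vanishes above index $n-3$) yields precisely formula~\eqref{eq:lutfin}.

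The main point requiring care is to verify that the distinguished structures on both sides match across the slice decomposition. Namely, the distinguished marked vertex of $\mathbf{m}_{12}$ counted by $p^{(c)}_k(m_1,m_2)$ must correspond, under the bijection, to the convention singling out $\mathbf{s}_1$ as the slice containing face $3$. This is precisely the role of step (5) in the bijection recalled after Proposition~\ref{prop:annulbij}: the marked empty slice replacing $\mathbf{s}_1$ is what produces the distinguished marked vertex in $\mathbf{m}_{12}$. Everything else is routine, being a direct multiplication of independent counts, so the bulk of the argument is the bookkeeping needed to line up conventions; no further calculation is required.
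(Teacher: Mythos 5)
Your proof is correct and follows essentially the same route as the paper: decompose via Proposition~\ref{prop:annulbij}, translate the tightness/irreducibility/separating-girth conditions independently to $\mathbf{m}_{12}$ and the slices via Proposition~\ref{prop:tightdecomp}, then multiply the counts from Proposition~\ref{prop:pcmm} and Corollary~\ref{cor:Fbkm} (with $n \to n-2$) and sum over $k$. Your closing remark about matching the distinguished marked vertex of $\mathbf{m}_{12}$ to the convention singling out $\mathbf{s}_1$ is a detail the paper leaves implicit but is correctly identified; otherwise the argument is identical.
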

\noindent Note again that $\alpha_{k,n}^{(b)}=0$ for $k>n$ hence the sum in \eqref{eq:lutfin} is \emph{finite}.

\begin{proof}
  By Proposition~\ref{prop:tightdecomp}, we need to enumerate tuples
  of the form
  $(\mathbf{m}_{12},\mathbf{s}_1,\ldots,\allowbreak \mathbf{s}_{k+1})$
  as in Proposition~\ref{prop:annulbij}, where furthermore the unique
  cycle of $\mathbf{m}_{12}$ has length at least $2(c+1)$, and where
  $\mathbf{s}_1,\ldots,\mathbf{s}_{k+1}$ are
  $2b$-irreducible. Proposition \ref{prop:pcmm} enumerates the former
  and Corollary~\ref{cor:Fbkm} the latter, upon changing $n$ into
  $n-2$ and shifting the face labels by $2$.  By summing over $k$, the
  wanted number reads
  \begin{equation}
    \sum_{k \geq 0} p^{(c)}_k(m_1,m_2) F^{(b)}_k(m_3,\ldots,m_n)
  \end{equation}
  which yields \eqref{eq:lutfin} by~\eqref{eq:Fbkm}.
\end{proof}

We may now proceed to the:

\begin{proof}[Proof of Theorem~\ref{thm:main-result}]
  The expression~\eqref{eq:main-result} for
  $\mathcal{N}_{n}^{(b)}(2m_1,\ldots,2m_n)$ is obtained:
  \begin{itemize}
  \item in the case $m_2 > b$ as a direct corollary of
    Proposition~\ref{prop:finalresult}, using the fifth item in
    Proposition~\ref{prop:tightdecomp} and the
    expression~\eqref{eq:pcmm} for $p^{(c)}_k(m_1,m_2)$ with $c=b$,
  \item in the case $m_2 = b$ as a direct consequence of the sixth
    item in Proposition~\ref{prop:tightdecomp} and of the
    interpretation of Remark~\ref{rem:pkm_interpretation} for
    $p^{(c)}_k(m_1,m_2)$ when $m_2=c=b$.
  \end{itemize}
  Let us now verify the polynomiality properties of
  $\mathcal{N}_{n}^{(b)}(2m_1,\ldots,2m_n)$.
  From their expressions \eqref{eq:qkcdef} and \eqref{eq:pkcdef} with $c \to b$, $p^{(b)}_{k_i}(m_i)$ and $q^{(b)}_{k_i}(m_i)$ are polynomials of total degree $2k_i$ in $b$ and $m_i$ (and even in $m_i$). Recall that $\alpha_{k,n-3}^{(b)}$ vanishes for $k>n-3$ and that for $k \leq n-3$ it is by Corollary~\ref{cor:alpha-polynomial} or Proposition~\ref{prop:alphapolexp} a polynomial of degree $2(n-3-k)$ in $b$.
  Therefore, each non-zero term in the sum in \eqref{eq:main-result} is of degree $2(n-3 - \sum_{i} k_i) + 2\sum_{i} k_i = 2n - 6$ in $b$ and the $m_i$'s. Its top degree term in the $m_i$'s is $\prod_i \frac{m_i^{2k_i}}{(k_i!)^2}$ which, for $\sum_{i} k_i=n-3$, is not present in any other non-zero term. Thus, the total degree of $\mathcal{N}_{n}^{(b)}(2m_1,\ldots,2m_n)$ is exactly $2n-6$.
 The expression \eqref{eq:main-result} is clearly symmetric in $m_2,\ldots,m_n$, and also 
 symmetric upon exchanging $m_1$ and $m_2$, as proved in Remark~\ref{rem:sympkc}, 
hence it is symmetric in all its arguments.
\end{proof}

The formula~\eqref{eq:main-result} given in this paper can be identified as \cite[Equation~(66)]{Budd2022}, as discussed in Appendix~\ref{app:compatibility-Budd}.

\paragraph{Particular cases.} Using $q^{(b)}_{k}(b)=p^{(b)}_{k}(b+1)=\delta_{k,0}$, we have the following specializations:
 \begin{prop}
  For $n \geq 3$ and $m_1,m_2 \geq b$ not both equal to $b$, we have
  \begin{gather}
    \mathcal{N}_{n}^{(b)}(2m_1,2m_2,\underbrace{2b,\ldots,2b}_{n-2 \text{ times}}) =
    (n-3)! \sum_{k=0}^{n-3} p_k^{(b)}(m_1,m_2) \alpha_{k,n-3}^{(b)}.
  \end{gather}
\end{prop}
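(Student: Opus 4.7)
The plan is to derive this proposition as a direct specialization of Theorem~\ref{thm:main-result}, combined with the two-variable convolution identity of Proposition~\ref{prop:pcmm}. The whole argument is a bookkeeping exercise, and there is no real obstacle beyond ensuring we are in the range of validity of the main theorem.

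First, I would reduce to the case $m_1 \geq b+1$. Theorem~\ref{thm:main-result} requires $m_1 \geq b+1$ and $m_2, \ldots, m_n \geq b$, but also asserts the symmetry of $\mathcal{N}^{(b)}_n$ in its arguments. The hypothesis ``$m_1, m_2 \geq b$ not both equal to $b$'' ensures that at least one of $m_1, m_2$ is at least $b+1$, so by swapping the two first arguments if necessary we may assume $m_1 \geq b+1$. Since the right-hand side of the proposition is symmetric in $m_1, m_2$ (Remark~\ref{rem:sympkc}), this reduction is harmless.

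Next, I would substitute $m_3 = \cdots = m_n = b$ in~\eqref{eq:main-result}. The key observation is that $q^{(b)}_{k}(b) = \delta_{k,0}$, which is immediate from the product formula~\eqref{eq:qkb} since the factor $i=0$ gives $m^2 - b^2 = 0$ for any $k \geq 1$ when $m = b$. Hence every term in the multiple sum of~\eqref{eq:main-result} with some $k_i > 0$ for $i \geq 3$ vanishes, and we are left with
\begin{equation*}
\mathcal{N}^{(b)}_n(2m_1, 2m_2, 2b, \ldots, 2b) = (n-3)! \sum_{k_1, k_2 \geq 0} p^{(b)}_{k_1}(m_1) \, q^{(b)}_{k_2}(m_2) \, \alpha^{(b)}_{k_1+k_2,\, n-3}.
\end{equation*}

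Finally, I would regroup the double sum by the value $k := k_1 + k_2$ and invoke Proposition~\ref{prop:pcmm}, specialized to $c = b$, whose content is precisely the convolution identity $p^{(b)}_{k}(m_1, m_2) = \sum_{k_1 + k_2 = k} p^{(b)}_{k_1}(m_1)\, q^{(b)}_{k_2}(m_2)$. This immediately rewrites the previous display as $(n-3)! \sum_{k \geq 0} p^{(b)}_{k}(m_1, m_2) \alpha^{(b)}_{k, n-3}$, and using the vanishing $\alpha^{(b)}_{k,n-3} = 0$ for $k > n-3$ (already recorded below Theorem~\ref{thm:main-result}) truncates the sum at $k = n-3$, giving the claimed formula.
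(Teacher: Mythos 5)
Your proof is correct and is essentially the specialization the paper has in mind: the paper presents this proposition as an immediate consequence of Theorem~\ref{thm:main-result} via $q^{(b)}_k(b)=\delta_{k,0}$ and the convolution identity~\eqref{eq:pcmm}, and you have simply spelled out the intermediate steps, including the (necessary and correctly handled) reduction to $m_1\geq b+1$ via the symmetry of both sides. Nothing is missing.
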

  
 \begin{prop}
  For $n \geq 3$ and $m>b$, we have
  \begin{gather}
    \mathcal{N}_{n}^{(b)}(2m,\underbrace{2b,\ldots,2b}_{n-1 \text{ times}}) =
    (n-3)! \sum_{k=0}^{n-3} p_k^{(b)}(m) \alpha_{k,n-3}^{(b)}, \\
    \mathcal{N}_{n}^{(b)}(2m,2b+2,\underbrace{2b,\ldots,2b}_{n-2 \text{ times}}) =
    (n-3)! \sum_{k=0}^{n-3} q_k^{(b)}(m) \alpha_{k,n-3}^{(b)} .
  \end{gather}
\end{prop}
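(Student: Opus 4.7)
The plan is to derive both identities as direct specializations of Theorem~\ref{thm:main-result}, relying on the two vanishing relations $q^{(b)}_k(b)=\delta_{k,0}$ and $p^{(b)}_k(b+1)=\delta_{k,0}$ highlighted just above the statement. Each follows immediately from the product forms~\eqref{eq:qkb}--\eqref{eq:pkb}: in $q^{(b)}_k(m)$ the factor with $i=0$ reads $m^2-b^2$, which vanishes at $m=b$, while in $p^{(b)}_k(m)$ the factor with $i=1$ reads $m^2-(b+1)^2$, which vanishes at $m=b+1$; in both cases, any term with $k\ge 1$ contains the offending factor, whereas $k=0$ gives an empty product equal to $1$.

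For the first identity, I substitute $m_1=m$ and $m_2=\cdots=m_n=b$ into~\eqref{eq:main-result}; the hypotheses $m_1\ge b+1$ and $m_i\ge b$ are satisfied since $m>b$. For each $i=2,\ldots,n$ the factor $q^{(b)}_{k_i}(m_i)=q^{(b)}_{k_i}(b)$ is nonzero only when $k_i=0$, so every term with some $k_i>0$ for $i\ge 2$ drops out. Only the summation index $k_1$ survives, and the fact that $\alpha^{(b)}_{k,n-3}=0$ for $k>n-3$ truncates the sum to $0\le k_1\le n-3$, yielding the stated formula upon renaming $k_1=k$.

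For the second identity, I first invoke the symmetry of $\mathcal{N}_n^{(b)}$ in its arguments (also part of Theorem~\ref{thm:main-result}) to place the face of degree $2b+2$ in position~$1$. Concretely I take $m_1=b+1$, $m_2=m$, and $m_3=\cdots=m_n=b$, which meets the hypothesis $m_1\ge b+1$. Plugging into~\eqref{eq:main-result}, the factor $p^{(b)}_{k_1}(b+1)$ forces $k_1=0$, and each $q^{(b)}_{k_i}(b)$ for $i\ge 3$ forces $k_i=0$, leaving only the index $k_2$. Renaming $k_2=k$ yields the right-hand side.

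No serious obstacle is expected: the argument is a mechanical combination of the symmetry of $\mathcal{N}_n^{(b)}$ with the two degenerate-argument vanishings. As an alternative route, one could bypass the symmetry step by specializing the preceding proposition (which already handles the case $m_3=\cdots=m_n=b$), using $p^{(b)}_k(m,b)=p^{(b)}_k(m)$ from Remark~\ref{rem:pkm_interpretation} together with $p^{(b)}_k(m,b+1)=q^{(b)}_k(m)$, the latter being obtained by chaining the symmetry of $p^{(b)}_k(m_1,m_2)$ from Remark~\ref{rem:sympkc} with Remark~\ref{rem:qkm_interpretation}. Either route produces the same two formulas.
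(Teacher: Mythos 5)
Your proposal is correct and follows essentially the same route as the paper, namely specializing Theorem~\ref{thm:main-result} via the degeneracies $q^{(b)}_k(b)=\delta_{k,0}$ and $p^{(b)}_k(b+1)=\delta_{k,0}$ (the latter used after invoking the symmetry of $\mathcal{N}_n^{(b)}$ to place $b+1$ in the first slot). The alternative route you sketch through the preceding proposition together with $p^{(b)}_k(m,b)=p^{(b)}_k(m)$ and $p^{(b)}_k(m,b+1)=q^{(b)}_k(m)$ is equally valid and amounts to the same computation.
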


 \begin{prop}
  For $n \geq 3$, we have
  \begin{gather}
    \mathcal{N}_{n}^{(b)}(2b+2,\underbrace{2b,\ldots,2b}_{n-1 \text{ times}}) =
    (n-3)! \alpha_{0,n-3}^{(b)}.
   \end{gather}
\end{prop}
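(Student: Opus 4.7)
The plan is to obtain this identity by direct specialization of the first formula of the preceding proposition, namely
\[
\mathcal{N}_{n}^{(b)}(2m,\underbrace{2b,\ldots,2b}_{n-1}) = (n-3)! \sum_{k=0}^{n-3} p_k^{(b)}(m)\, \alpha_{k,n-3}^{(b)},
\]
evaluated at $m = b+1$. The key point is that the factored expression~\eqref{eq:pkb} for $p_k^{(b)}(m)$ contains the factor $m^2-(b+1)^2$ as soon as $k\geq 1$, so $p_k^{(b)}(b+1)=\delta_{k,0}$ while $p_0^{(b)}(b+1)=1$. Only the term $k=0$ survives in the sum, yielding exactly $(n-3)!\,\alpha_{0,n-3}^{(b)}$. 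The hypothesis $m>b$ of the parent proposition is satisfied since $b+1>b$, so no boundary issue arises.

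An alternative route that bypasses the cascade of specializations invokes Remark~\ref{rem:singleslice} directly: choosing $m_1=b+1$ and $m_2=b$, the slice decomposition of Propositions~\ref{prop:annulbij} and~\ref{prop:tightdecomp} produces a single tight $2b$-irreducible $0$-slice whose inner faces are the $n-2$ remaining faces, all of degree $2b$. By Corollary~\ref{cor:Fbkm} applied with $n$ replaced by $n-2$ and all $m_i$ equal to $b$, the count of such slices equals
\[
F_0^{(b)}(b,\ldots,b) = (n-3)! \sum_{k_1,\ldots,k_{n-2}\geq 0} q^{(b)}_{k_1}(b)\cdots q^{(b)}_{k_{n-2}}(b)\, \alpha^{(b)}_{k_1+\cdots+k_{n-2},n-3},
\]
and the factored form~\eqref{eq:qkb} gives $q_k^{(b)}(b)=\delta_{k,0}$ (the $i=0$ factor $m^2-b^2$ vanishes at $m=b$), so only the all-zero summand survives and we recover $(n-3)!\,\alpha_{0,n-3}^{(b)}$.

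There is no genuine obstacle to overcome; the identity should be regarded as a consistency check of Theorem~\ref{thm:main-result} in the extremal regime where every face except one has the minimal allowed degree $2b$. Combinatorially, the constraint that labeled blossoming vertices must have label $m\geq b+1$ forces all blossoming vertices arising from the faces of degree $2b$ to be \emph{special} (hence carry no attaching point), so the enumeration collapses onto sequences of simplified $b$-arrow trees with no free attaching points at blossoming vertices, which is precisely the quantity counted by $\alpha_{0,n-3}^{(b)}$ together with the $(n-3)!$ relabelings of the $n-2$ faces attached to $\mathbf{s}_1$.
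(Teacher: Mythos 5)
Your first route is exactly the paper's implicit proof: the paper introduces all three "particular case" propositions with the single sentence "Using $q^{(b)}_{k}(b)=p^{(b)}_{k}(b+1)=\delta_{k,0}$", and your specialization of the preceding proposition at $m=b+1$ (equivalently, of Theorem~\ref{thm:main-result} at $m_1=b+1$, $m_2=\cdots=m_n=b$) is precisely that. Your second route via Remark~\ref{rem:singleslice} and Corollary~\ref{cor:Fbkm} is also present in the paper as Proposition~\ref{prop:slice}, and your closing combinatorial paragraph correctly identifies why only the all-zero term survives; both paths are correct and consistent with the paper's reasoning.
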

For $b=2$ and $n \geq 3$, this yields
\begin{gather}
    \mathcal{N}_{n}^{(2)}(6,\underbrace{4,\ldots,4}_{n-1 \text{ times}}) =
    \frac{(2(n-3))!}{(n-3)!}
   \end{gather}
for the number of $4$-irreducible dissections of the hexagon by $(n-1)$ labeled quadrangles, in agreement\footnote{
  up to a $\frac 6 {(n-1)!}$ factor, since these papers consider maps with a rooted boundary (factor of $6$)
  and undistinguished quadrangles (factor of $\frac 1 {(n-1)!}$).
} with
\cite{MS68} and \cite{FPS08}. Indeed, the tightness (and even the $2$-connectedness) of the map follows automatically from the irreducibility constraints.

Finally, recalling Remark~\ref{rem:singleslice} (with $n \to n+2$) and
Corollary~\ref{cor:Fbkm}, we get:
\begin{prop}
  \label{prop:slice}
  For $n \geq 1$ and $m_1,\ldots,m_n \geq b$, we have
  \begin{multline}
    \mathcal{N}_{n+2}^{(b)}(2b+2,2b,2m_1,\ldots,2m_n)= F_0^{(b)}(m_1,\ldots,m_n) \\
    = (n-1)! \sum_{k_1,\ldots,k_n \geq 0} q^{(b)}_{k_1}(m_1) \cdots q^{(b)}_{k_n}(m_n) \alpha^{(b)}_{k_1+\cdots+k_n,n-1}.
  \end{multline}
\end{prop}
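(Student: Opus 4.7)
The plan is essentially to assemble two previously established results, as hinted in the statement itself. First I would invoke Remark~\ref{rem:singleslice} with the substitution $n \to n+2$: this remark already asserts that planar tight $2b$-irreducible maps with $n+2$ labeled faces of degrees $2b+2, 2b, 2m_1, \ldots, 2m_n$ are in bijection with tight $2b$-irreducible $0$-slices having $n$ labeled inner faces of degrees $2m_1, \ldots, 2m_n$. Indeed, since $m_1 = b+1$ and $m_2 = b$, the third item of Proposition~\ref{prop:tightdecomp} (together with its sixth item on irreducibility when $m_2 = b$) forces the two-face map $\mathbf{m}_{12}$ to be exactly a cycle of length $2b$ with a single edge dangling to a marked vertex, which leaves room for $k+1=1$ slice only, i.e.\ $k=0$.

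Second I would apply Corollary~\ref{cor:Fbkm} in the case $k=0$, which gives the counting
\[
F^{(b)}_0(m_1,\ldots,m_n) = (n-1)! \sum_{k_1,\ldots,k_n \geq 0} q^{(b)}_{k_1}(m_1) \cdots q^{(b)}_{k_n}(m_n) \alpha^{(b)}_{k_1+\cdots+k_n,n-1}
\]
for the number of tight $2b$-irreducible $0$-slices with $n$ inner faces of the prescribed degrees, a consequence of Proposition~\ref{prop:Fbkm} and the bijection between $0$-slices and decorated trees. Chaining the bijection with this enumeration yields the claimed identity.

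There is no real obstacle here: both ingredients have already been set up, and one only needs to check that the hypotheses match. Specifically, the condition $m_1,\ldots,m_n \geq b$ is precisely what guarantees that the blossoming vertices in the associated decorated trees are either special (when $m_i = b$) or labeled (when $m_i > b+1$), so that Corollary~\ref{cor:Fbkm} applies without modification. The proof is therefore one paragraph long, combining the two cited results.
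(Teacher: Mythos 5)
Your proof is correct and matches the paper's own (very terse) derivation: Proposition~\ref{prop:slice} is indeed obtained by chaining Remark~\ref{rem:singleslice} (with $n\to n+2$) with the $k=0$ case of Corollary~\ref{cor:Fbkm}, exactly as you describe. One small slip in your final paragraph: labeled blossoming vertices have label $m_i \geq b+1$, so the dichotomy should read "special (when $m_i=b$) or labeled (when $m_i \geq b+1$)," not "$m_i > b+1$."
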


\section{Conclusion}
\label{sec:conc}

In this paper, we gave a fully combinatorial proof of Theorem~\ref{thm:main-result}
counting the number of planar bipartite tight $2b$-irreducible maps with labelled faces of prescribed degrees.
As opposed to Budd's approach in \cite{Budd2022} based on a substitution of formal power series,
we had recourse here to a decomposition of desired maps into (tight $2b$-irreducible) slices
for which we presented a bijective enumeration based on $b$-decorated trees drawn on their derived map.

Even though we have not worked out the details, we believe that the assumption of bipartiteness is not essential,
and our formula can be extended to the enumeration of planar tight $d$-irreducible maps with labelled faces of arbitrary odd or even degrees.
The case $d=0$ was treated in detail in \cite[Theorem 2.12]{polytightmaps} and involves quasi-polynomials instead of ordinary polynomials.

The decorated tree formulation of Section~\ref{sec:treeformulation}
involves placing arrows on the primal half-edges of the map, which
form arrow trees.  Such construction is strongly reminiscent of the
approach of Bernardi and Fusy, which consists in choosing an
appropriate biorientation of the map.  We noted in
Remark~\ref{rem:altrepr} that, in the absence of special vertices
(which corresponds to maps with no face of degree $2b$), our
$b$-decorated trees correspond precisely to the $(b+1)$-dibranching
mobiles of \cite[Definition 8]{BF12b}.  Reintroducing the faces of
degree $2b$ in their formulation seems to add an extra challenge, and
we wonder whether the ideas of \cite{BFL23_SchnyderWoods}, which
involve working with orientations of the \emph{derived} map, could
solve this issue.

Finally, we cannot help but notice that the general structure of the right-hand side of Equation~\eqref{eq:main-result}
is very reminiscent of \cite[Equation (32)]{BGM24_Zhukovsky} for planar bipartite maps with a number of labelled \emph{tight boundaries} of prescribed degrees (and arbitrary even-valent internal faces).

\appendix

\section{Going back from a decorated tree to a $\boldsymbol 0$-slice}
\label{app:closing-tree}
We discuss here how to get back from a $b$-decorated tree for some $b\geq 1$ to the associated 
tight $2b$-irreducible $0$-slice. In practice we rather construct the dual map of the $0$-slice
by a classical procedure of \emph{closure} consisting in matching the leaflets of the decorated tree to properly
placed \emph{buds} via a system of non-crossing arches.

The first step of the construction consists in adding buds to the decorated tree as follows (see Figure~\ref{fig:refermeture}).
We first complete the decorated tree by adding a dual half-edge from its univalent root edge-vertex $v_0$
to a dual vertex $\Delta_0$ in the outer face.
Then, all edge-vertices have degree two 
in the (completed) decorated tree while they had degree four in the derived map. The added buds then correspond to the
missing incident dual half-edges. More precisely, an edge-vertex $v$ of the decorated tree may be a 
bioriented edge-vertex and we then attach to $v$ two buds, one in each corner around $v$ in the decorated tree (to account for
the fact that $v$ should be incident to two dual half-edges not covered by the tree in the derived map).
The vertex $v$ may be a bent edge-vertex: we
then attach one bud to $v$ (to account for
the fact that $v$ should now be incident to one dual half-edges not covered by the tree in the derived map).
The sector in which we put the bud is determined by demanding that the order of appearance clockwise around $v$ be dual half-edge/bud/primal half-edge,
where the primal or dual nature of each half-edge is determined from the nature of its incident vertex other than the edge-vertex.
This applies in particular to the twig-vertices.
Finally, it may be that $v$ is (special) dual/dual edge when $b=1$, in which case we add no bud. 
Altogether, this yields $2$ buds for each bioriented edge-vertex, $1$ bud for each bent edge-vertex, and $0$ bud for each dual/dual edge-vertex.

\begin{figure}[t]
  \centering
  \includegraphics[width=\textwidth]{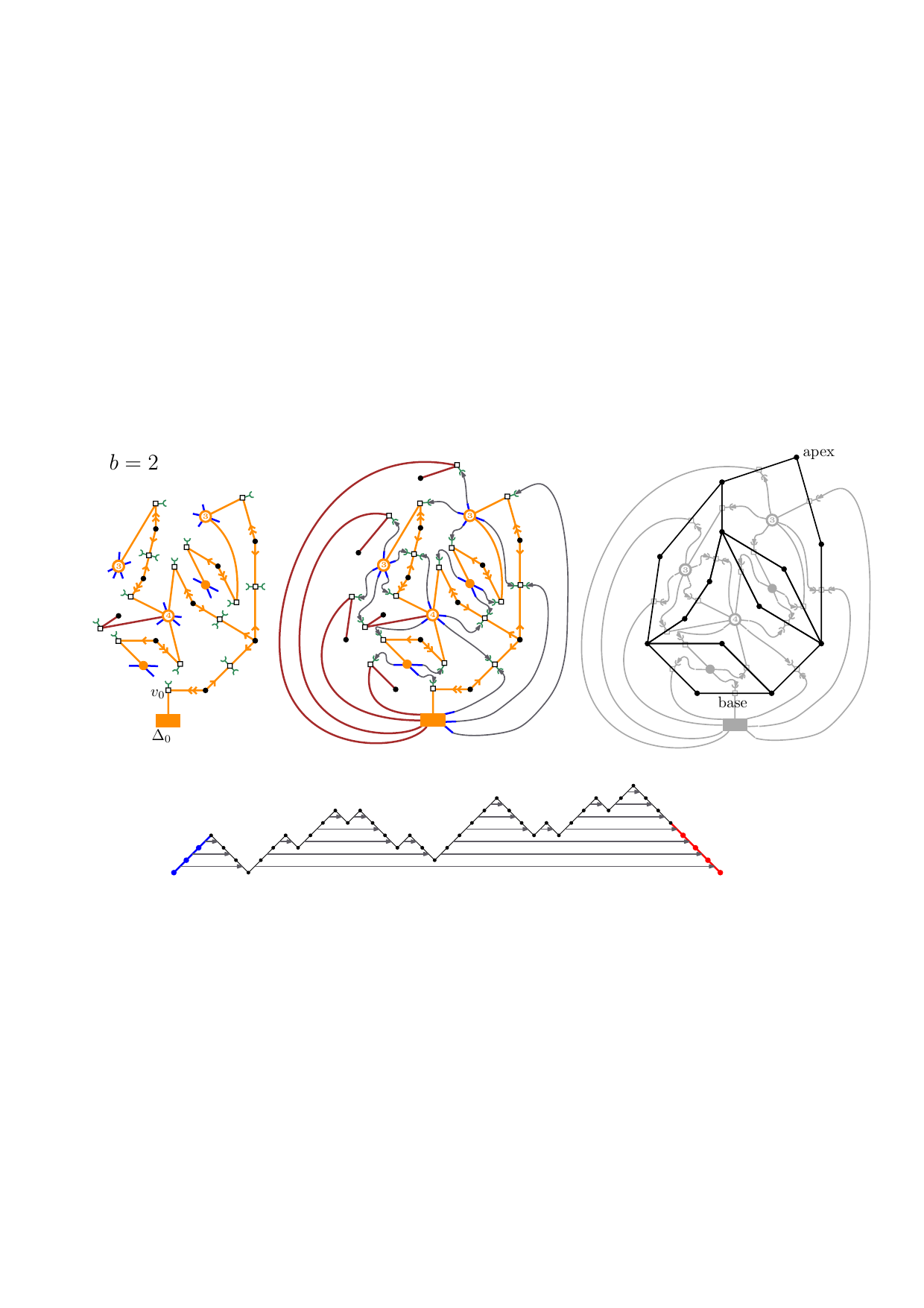}
  \caption{Constructing a $2b$-irreducible $0$-slice from a $b$-decorated tree (here $b=2$). Top left: we first
  equip the tree with buds attached to bioriented edge-vertices (two buds each, green) and to bent edge-vertices (one bud each).
  Bottom: the path coding for the sequence of leaflets and buds
  read counterclockwise around the tree from $\Delta_0$ (black steps).
  The path is (minimally) completed into a Dyck path by adding an initial series of $l=3$ up steps (in blue)
  and a final series of $l+1=4$ down steps. We also indicated (dark grey arrows) the canonical matching of up and down steps. 
  Top center: we add accordingly a sequence of $l$ leaflets and one of $l+1$ twigs (with buds) attached to $\Delta_0$.
  Each leaflet is then canonically matched to a subsequent bud counterclockwise around the
  tree by a system of non-crossing arches (in dark grey). Top right: removing the primal edges yields the
  dual map (in grey) of the desired $0$-slice (in black).}
  \label{fig:refermeture}
\end{figure}

Consider now a \emph{descending subtree} $\mathcal{T}$ of the decorated tree, which codes for a $2p$-(sub)slice for some $p$ between $0$ and $b$,
and which we root at the parent half-edge of $\Delta(BC)$ if the $2p$-slice has base $BC$ (this parent edge is a primal 
half-edge with $p$ descending arrows if $1 \leq p \leq b$ or a dual edge without arrows if $p=0$).
We define the \emph{charge} $c(\mathcal{T})$ of the subtree $\mathcal{T}$ as the total number of incident 
leaflets minus that of incident buds.
\begin{prop}
We have
\begin{equation}
c(\mathcal{T})=\left\{
\begin{matrix}
2p\ \ & \hbox{if\  $1\leq p\leq b$}, \\
 1\ \  &\hbox{if\  $p=0$}.
\end{matrix}
\right. 
\end{equation}
\end{prop}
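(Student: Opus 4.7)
The plan is to proceed by strong induction on the recursive decomposition of $\boldsymbol{\sigma}$ described in Section~\ref{sec:quasislices}, following the three cases (I), (II) and (III). At each step I will isolate the contribution to $c(\mathcal{T})$ coming from the root edge-vertex $\Delta(BC)$ and from the vertices introduced at that step (in particular the central primal vertex $\Delta(C)$ in case (I), or the central dual vertex $\Delta(f)$ in case (II)), then invoke the induction hypothesis on each sub-slice subtree $\mathcal{T}_j$. The key bookkeeping ingredient is that the parent half-edge is part of $\mathcal{T}$, so that $\Delta(BC)$ has degree two in $\mathcal{T}$; its bud count ($0$, $1$ or $2$) is then read from its type (dual/dual, bent, or bioriented), which is itself determined by the parities of $p$ and of the child side.

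The base case is case (III), i.e.\ the $2b$-angle slice, for which $p = b-1$. Here $\mathcal{T}$ consists of $\Delta(BC)$ and $\Delta(f)$, the latter being a special dual vertex that carries $2b-1$ leaflets. For $b \geq 2$, $\Delta(BC)$ is a (special) bent edge-vertex and carries $1$ bud, so $c(\mathcal{T}) = (2b-1) - 1 = 2(b-1) = 2p$. The case $b = 1$ must be treated slightly differently: then $p = 0$, the parent half-edge is dual, and $\Delta(BC)$ is a dual/dual edge-vertex carrying $0$ bud, giving $c(\mathcal{T}) = 1 - 0 = 1$, again matching the claim.

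For the inductive step in case (I), where $0 \leq p \leq b-1$ and $\boldsymbol{\sigma}$ is not the $2b$-angle, the edge-vertex $\Delta(BC)$ is bioriented when $p \geq 1$ (contributing $2$ buds) and bent when $p = 0$ (contributing $1$ bud), while $\Delta(C)$ is a primal vertex and contributes neither bud nor leaflet. The induction hypothesis gives $\sum_j c(\mathcal{T}_j) = 2(p_1 + \cdots + p_q) = 2(p+1)$, since each $p_j \geq 1$, and subtracting the buds on $\Delta(BC)$ yields $c(\mathcal{T}) = 2p$ for $p \geq 1$ and $c(\mathcal{T}) = 1$ for $p = 0$. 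In case (II), where $p = b$ and $\Delta(f)$ is a labeled dual vertex of degree $2m$, write $e$ (respectively $z$) for the number of empty (respectively genuine $0$-slice) sub-slices among the $2m-1$ sub-slices of Proposition~\ref{prop:decompIII}, so that $e + z = m + b$ and there are in addition $m-b-1$ trivial sub-slices. The contributions are: $\Delta(BC)$ (bent, $1$ bud), the $m-b-1$ twigs around $\Delta(f)$ (each contributing $1$ bud), the $e$ direct leaflets on $\Delta(f)$, and the $z$ subtrees $\mathcal{T}_j$ of charge $1$ each by induction. Summing yields
\[
c(\mathcal{T}) = e + z - 1 - (m-b-1) = (m+b) - m + b = 2b = 2p,
\]
as required.

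I anticipate the only real obstacle to be combinatorial bookkeeping: correctly identifying at each step the type of every newly introduced edge-vertex (bioriented, regular bent, special bent, twig-vertex, or the $b=1$ dual/dual special case), and ensuring that the parent half-edge is attributed once and only once to each child subtree, so that buds at the boundary between $\mathcal{T}$ and $\mathcal{T}_j$ are not double-counted. Once these conventions are fixed as in Section~\ref{sec:treeformulation}, the computation in each of the three cases reduces to the routine arithmetic carried out above.
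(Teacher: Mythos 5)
Your proof is correct and follows essentially the same route as the paper's: induction on the three cases (I), (II), (III) of the recursive decomposition, counting buds at the root edge-vertex $\Delta(BC)$ according to its type and invoking the induction hypothesis on descending subtrees. The only cosmetic differences are that you introduce explicit counters $e$ and $z$ for the empty and non-empty $0$-sub-slices in case (II) (where the paper simply lumps them as $m+b$ contributions of $+1$ each) and that you fold the $b=1$ dual/dual edge-vertex exception into the base case rather than appending it as a final remark.
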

\begin{proof}
Assume $b>1$. The above property is proved by induction on the three cases that arise:
\begin{itemize}
  \item[]{(I)} $\mathcal{T}$ is a subtree coding for a $2p$-slice with $p<b$, excluding case (III);
  \item[]{(II)} $\mathcal{T}$ is a subtree coding for a $2b$-slice;
  \item[]{(III)} $\mathcal{T}$ is a subtree coding for a $2(b-1)$-slice that is a $2b$-angle.
\end{itemize}
 
In case (I), $\Delta(C)$ is an internal vertex of the tree with $q$ descending subtrees coding for $2p_i$-slices
with $p_i\geq 1$ ($1\leq i\leq q$) and  $p_1+\cdots +p_q=(p+1)$. From the induction hypothesis, 
those contribute a total of $2p_1+\cdots+2p_q$
to $c(\mathcal{T})$. If $0<p<b$, $\Delta(BC)$ is a bioriented edge-vertex and gives rise to two buds hence contributes $-2$ to $c(\mathcal{T})$, so that
 $c(\mathcal{T})=2(p_1+\cdots +p_q)-2=2p$ as wanted. If $p=0$, $\Delta(BC)$ is a
 bent edge-vertex and gives rise to one bud hence contributes $-1$ to $c(\mathcal{T})$, so that
 $c(\mathcal{T})=2(p_1+\cdots +p_q)-1=2(0+1)-1=1$ as wanted (in fact we necessarily have $q=1$ and $p_1=1$ in this case).  
 
 In case (II), $\Delta(BC)$ is a bent edge-vertex and contributes $-1$ to $c(\mathcal{T})$. 
 The descending subtree starts with a labeled vertex of degree $2m$ with $m-b-1$ twigs, each contributing
 $-1$ to $c(\mathcal{T})$ and a total of $m+b$ leaflets or subtrees coding for $0$-slices, each contributing $+1$
 (from the induction hypothesis).
 This leads to $c(\mathcal{T})=-1-(m-b-1)+(m+b)=2b$ as wanted.
 
 In case (III), $\Delta(BC)$ is a bent edge-vertex and contributes $-1$ to $c(\mathcal{T})$. 
 The descending subtree is a single special vertex carrying $2b-1$ leaflets. This leads to 
  $c(\mathcal{T})=-1+2b-1=2(b-1)$ as wanted.  
  This ends the proof for $b>1$. 
  
For $b=1$, the only difference is in case (III) where $\Delta(BC)$ is a dual/dual 
edge-vertex and contributes $0$ to $c(\mathcal{T})$. The descending subtree is reduced to a special dual vertex
with $1$ leaflet, hence $c(\mathcal{T})=0+1=1$ as wanted since the $2$-angle map is a $0$-slice. 
\end{proof}

The main lesson of the above proposition is that, if we now consider the whole decorated tree 
coding for a $0$-slice, completed with the dual half-edge connecting $v_0$ to $\Delta_0$,
this tree, when equipped with buds, has charge $+1$ hence has exactly one more incident leaflet
than buds.  More precisely, reading the sequence of leaflets and buds encountered by going 
counterclockwise around the tree from $\Delta_0$ gives a two-letter word in the alphabet 
$\{+1,-1\}$ (with the correspondence leaflet $\to +1$ and bud $\to -1$) with one more $+1$ than $-1$. This in turn can be represented 
as a two-step path with up ($+1$) and down ($-1$) steps, starting from height $0$ and ending at height $+1$.
Calling $-l$ the minimum height, with $l\geq 0$, we can transform this path into a Dyck path
from height $0$ to height $0$, and staying above or at height $0$, by completing the path with $l$ 
preceding up steps and $l+1$ following down steps and by shifting the heights by $+l$. Now,
as it is well-known for Dyck paths, each ascending step can be matched
canonically to a subsequent descending step at the same height on its right (see Figure~\ref{fig:refermeture}-bottom). 

Transposed to the bud-equipped completed decorated tree, this means that, if we further equip it by $l$ leaflets and 
$l+1$ twigs attached to $\Delta_0$
(with a bud attached to each new added twig) and read the sequence
of leaflets and buds counterclockwise around the completed tree, there is a canonical matching
where each leaflet is connected to a subsequent bud by an arch so that the arch system is non-crossing,
does not cross the tree, and no arch passes below $\Delta_0$ (see Figure~\ref{fig:refermeture}, top). 
These arches reconstruct precisely the missing dual half-edges so that, if we now
remove from the tree the primal half-edges (i.e.\ the half-edge carrying arrows and the primal half-edges in the twigs), 
and erase all the edge-vertices,
we find the dual map of the $0$-slice that we are looking for. The apex of the $0$-slice is dual to the external face
while its base is dual to edge of the dual map that contained $v_0$ (see Figure~\ref{fig:refermeture}, top right).

\section{Compatibility of formula~(\ref{eq:main-result}) with Budd's expression}
\label{app:compatibility-Budd}
We start from \eqref{eq:main-result}, and express $\alpha_{k,n}^{(b)}$ via~\eqref{eq:arrowtreedef}.
This leads to:
\begin{equation}
  \begin{split}
  \mathcal{N}_{n}^{(b)}(2m_1,\ldots,2m_n)
  &= (n-2)! [z^{n-2}] \mathcal S \\
  \hbox{with } \mathcal S &= \sum_{k_1,\ldots,k_n \geq 0} p^{(b)}_{k_1}(m_1) q^{(b)}_{k_2}(m_2) \cdots q^{(b)}_{k_n}(m_n)
   \frac{U_0(z)^{1+\sum\limits_{i\geq 1} k_i}}{1+\sum\limits_{i\geq 1} k_i}
  \end{split}
\end{equation}
We now invert the relation \eqref{eq:qktopkc} with $c \to b$, namely:
\begin{equation}
  \label{eq:pktoqkc}
  p_k^{(b)}(m) = \sum_{j=0}^{k} (-1)^j \binom{2b+j}j q_{k-j}^{(b)}(m)
\end{equation}
which leads to:
\begin{equation}
  \begin{split}
  \mathcal S &= \sum_{k_1,\ldots,k_n \geq 0} \sum_{j=0}^{k_1} (-1)^j \binom{2b+j}j q^{(b)}_{k_1-j}(m_1) q^{(b)}_{k_2}(m_2) \cdots q^{(b)}_{k_n}(m_n)
    \frac{U_0(z)^{1+\sum\limits_{i\geq 1} k_i}}{1+\sum\limits_{i\geq 1} k_i} \\
    &=   \sum_{k_0, k_1,\ldots,k_n \geq 0} (-1)^{k_0} \binom{2b+k_0}{k_0} q^{(b)}_{k_1}(m_1) \cdots q^{(b)}_{k_n}(m_n)
    \frac{U_0(z)^{k_0 + 1+\sum\limits_{i\geq 1} k_i}}{k_0+1+\sum\limits_{i\geq 1} k_i} 
  \end{split}
\end{equation}
through renaming $(j, k_1) \to (k_0, k_0+k_1)$. We now deduce: 
\begin{equation}
  \begin{split}
    \mathcal S &= \int_0^{U_0(z)} dr \sum_{k_0, k_1,\ldots,k_n \geq 0} (-1)^{k_0} \binom{2b+k_0}{k_0} r^{k_0+\sum\limits_{i\geq 1} k_i} q^{(b)}_{k_1}(m_1) \cdots q^{(b)}_{k_n}(m_n) \\
    &= \int_0^{U_0(z)} \frac{dr}{(1+r)^{2b+1}} \sum_{k_1,\ldots,k_n \geq 0} r^{\sum\limits_{i\geq 1} k_i} q^{(b)}_{k_1}(m_1) \cdots q^{(b)}_{k_n}(m_n) \\
    &= \int_0^{U_0(z)} \frac{dr}{(1+r)^{2b+1}} \prod_{i=1}^{n}I(b, m_i; r)
  \end{split}
\end{equation}
with $I(b, m; r) := \sum_{k=0}^{+\infty} r^k q^{(b)}_{k} (m)$ as in
\cite[Equation~(48)]{Budd2022}.  This reproduces precisely the result
of [\emph{op. cit.}, Equation~(66)] upon identifying this reference's
$J^{-1}(b; z)$ to our $U_0(z)$: indeed, recall from
\eqref{eq:U0implicit} that $U_0(z)$ is determined implicitly by
$z=h_b(U_0(z))$, and note that our $h_b(u)$ is the same as $J(b;u)$ as
defined in~[\emph{op. cit.}, Equation~(49)].

\section{Counting planar $2b$-irreducible $2b$-angulations with $n$ faces}
\label{sec:Npatho}

In this appendix, we consider the problem of counting planar
$2b$-irreducible $2b$-angula\-tions, that is $2b$-irreducible maps whose
all faces have degree $2b$. Note that such maps are automatically
bipartite and tight. We noted after Theorem~\ref{thm:main-result} that
it does not quite hold in the case where all $m_i$ are equal to $b$,
for there is an pathological term as visible in~\cite[Theorem
1]{Budd2022}. Precisely, we have the following:
\begin{prop}
  \label{prop:Npatho}
  For any $n \geq 3$, the number of $2b$-irreducible $2b$-angulations
  with $n$ labeled faces is equal to
  \begin{equation}
    \label{eq:Npatho}
    \mathcal{N}_{0,n}^{(b)}(\underbrace{2b,\ldots,2b}_{n \text{ times}}) =
    (n-3)! \sum_{k=0}^{n-3} (-1)^k \binom{2b+k}k \alpha_{k,n-3}^{(b)} + \mathbf{1}_{n \geq 4} \frac{(n-1)!}{2} (-1)^n.
  \end{equation}
\end{prop}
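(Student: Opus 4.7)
The plan is to adapt the argument of Theorem~\ref{thm:main-result} to the degenerate case $m_1=\cdots=m_n=b$, which lies outside the hypotheses of Proposition~\ref{prop:finalresult}. The first summand of~\eqref{eq:Npatho} is exactly what one obtains by formally specializing~\eqref{eq:main-result} at $m_i=b$ (using $p^{(b)}_{k_1}(b) = (-1)^{k_1}\binom{2b+k_1}{k_1}$ and $q^{(b)}_{k_i}(b) = \delta_{k_i,0}$); the pathological summand $\frac{(n-1)!}{2}(-1)^n$ for $n\ge 4$ corrects the breakdown of the slice bijection in this symmetric regime.

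First I would apply the slice decomposition of Proposition~\ref{prop:annulbij} to a $2b$-irreducible $2b$-angulation $\mathbf{m}$ with face~$1$ containing the origin. Because every face has degree $2b$, the separating girth equals $2b$ and the innermost separating cycle is necessarily the contour of face~$1$, which forces $\mathbf{m}_{12}$ to reduce to a bare $2b$-cycle carrying $k+1$ marked vertices (one distinguished). By Corollary~\ref{cor:Fbkm}, specialized via $q^{(b)}_{k_i}(b)=\delta_{k_i,0}$, each valid $(k+1)$-tuple of slices contributes $(n-3)!\,\alpha^{(b)}_{k,n-3}$ regardless of the precise marking pattern on $\mathbf{m}_{12}$. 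The enumeration therefore reduces to a sum $\sum_{k\ge 0} N(k)\,(n-3)!\,\alpha^{(b)}_{k,n-3}$ for an effective weight $N(k)$ counting admissible markings, and I then need to compare $N(k)$ with the signed polynomial $(-1)^k\binom{2b+k}{k}$ produced by the naive specialization.

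These two counts do not coincide, because Proposition~\ref{prop:pcmm} endows $p^{(b)}_k(m_1,m_2)$ with a combinatorial meaning only for $m_1,m_2>b$, while its boundary specialization is an analytic continuation rather than a genuine enumeration. Concretely, in the degenerate case the reconstruction step of the slice bijection can produce spurious maps whose face~$2$ degree fails to be $2b$ (as already visible for $b=1$, $n=4$, where a naive application produces a map whose ``outer'' face has degree~$6$), and the $\Z/2$ symmetry between faces~$1$ and~$2$ introduces an extra orientation ambiguity not present generically. The hard part will be to pin down the correction as precisely $\frac{(n-1)!}{2}(-1)^n$: a clean bijective derivation would identify the discrepancy with $(n-1)!/2$ signed cyclic arrangements of the remaining faces around the $2b$-cycle of $\mathbf{m}_{12}$ (the factor $1/2$ reflecting the face-1/face-2 symmetry, the sign $(-1)^n$ emerging from an inclusion-exclusion over spurious contributions), but this appears delicate. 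A more pragmatic alternative is to rewrite the right-hand side of~\eqref{eq:Npatho} in generating-function form using $U_0(z)$ and the integral representation $I(b,m;r)$ introduced in Appendix~\ref{app:compatibility-Budd}, thereby matching the formula with \cite[Theorem~1]{Budd2022} and deducing the statement from Budd's independent substitution proof.
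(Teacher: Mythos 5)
Your proposal correctly identifies that the first summand is the formal specialization of~\eqref{eq:main-result} at $m_i=b$ (via $p^{(b)}_k(b)=(-1)^k\binom{2b+k}{k}$ and $q^{(b)}_k(b)=\delta_{k,0}$), and correctly flags that this specialization is not a genuine enumeration, so a correction term is needed. But this is precisely where the proof stops: you speculate that the discrepancy arises from ``spurious maps'' and a ``$\Z/2$ symmetry,'' but you give no actual mechanism that produces $\frac{(n-1)!}{2}(-1)^n$, and the alternative of citing Budd's substitution proof is exactly what the paper is trying to avoid. Moreover, your intermediate claim that the problem ``reduces to a sum $\sum_k N(k)(n-3)!\alpha^{(b)}_{k,n-3}$ for an effective weight $N(k)$'' is unsupported: the last two items of Proposition~\ref{prop:tightdecomp}, which translate $2b$-irreducibility of $\mathbf m$ into conditions on $\mathbf m_{12}$ and the slices, both require $m_1>b$, so there is no direct characterization of irreducibility in terms of the tuple when $m_1=m_2=b$. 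You cannot simply count marking patterns on a bare $2b$-cycle and expect the map $\mathbf m$ reconstructed from them to be $2b$-irreducible.

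The paper's actual route is quite different, and none of it appears in your proposal. First, it counts \emph{essentially} $2b$-irreducible $2b$-angulations with separating girth exactly $2b$ by specializing Proposition~\ref{prop:finalresult} at $c=b-1$ (not $c=b$), which is legitimate since $m_i=b\geq c+1$; this gives $\beta^{(b)}_n=(n-3)!\sum_k\binom{2b-1}{k}\alpha^{(b)}_{k,n-3}$ and, in generating-function form, $(1+U_0(z))^{2b}$. Second, it passes from essentially to strongly $2b$-irreducible via Lemma~\ref{lem:annularid}: one cuts a map in $\mathfrak M_b$ along all cuttable separating $2b$-cycles to get a nested sequence $(\mathbf m_1,\ldots,\mathbf m_\ell)$, observes that each piece either lies in $\mathfrak N_b$ or is one of the $b$ pathological maps $\mathbf p_i$ with intersecting minimal cycles, and encodes the consecutivity constraint involving $\mathbf p_i,\mathbf p'_i,\mathbf p''_i$ by Smirnov words over $\{O,P_1,\ldots,P_b\}$. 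This yields the identity $(1+U_0(z))^{2b}=\bigl(1-(N_b(z)-1-2bz)-b\frac{2z+z^2}{(1+z)^2}\bigr)^{-1}$, from which $N_b(z)$ is solved, and extracting $[z^{n-2}]$ produces both the alternating-sign main term and the pathological correction $b(-1)^n(n-1)\mathbf 1_{n\geq 4}$ in one stroke. The Smirnov-word/cuttable-cycle argument is the crucial missing idea, and without it your proof has no way to reach~\eqref{eq:Npatho}.
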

Let us observe that the first term in the right-hand side is equal to
the right-hand side of~\eqref{eq:main-result} for $m_1=\cdots=m_n=b$,
since $p_k^{(b)}(b)=(-1)^k \binom{2b+k}k$ and
$q_k^{(b)}(b)=\delta_{k,0}$. Here it is slightly puzzling that the
sign alternates with $k$, whereas under the assumptions of
Theorem~\ref{thm:main-result} all the terms in~\eqref{eq:main-result}
were non-negative. In the remainder of this appendix, we give a
combinatorial, but unfortunately not bijective, derivation of
Proposition~\ref{prop:Npatho}. It is based on an argument which was
mentioned, but not detailed, at the end
of~\cite[Section~9.3]{irredmaps}.

The bijection described in Section~\ref{sec:slicedecgen} does not
easily allow to count $2b$-irreducible $2b$-angulations: in the two
items characterizing $2b$-irreducible maps at the end of
Proposition~\ref{prop:tightdecomp}, it is always assumed that face $1$
has a degree strictly larger than $b$. To circumvent this issue, we
first count closely related objects by specializing
Proposition~\ref{prop:finalresult} to the case $c=b-1$,
$m_1=\ldots=m_n=b$: noting that, by~\eqref{eq:pkccp1}, we have
$p_k^{(b-1)}(b,b)=q_k^{(b-1)}(b)=\binom{2b-1}k$, we obtain the
following:
\begin{lem}
  For any $n \geq 3$, the number of
  \underline{essentially} $2b$-irreducible $2b$-angulations with $n$
  labeled faces with separating girth equal to $2b$ is equal to
  \begin{equation}
    \beta^{(b)}_n := (n-3)! \sum_{k \geq 0} \binom{2b-1}k \alpha^{(b)}_{k,n-3}.
  \end{equation}
\end{lem}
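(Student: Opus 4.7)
The plan is to specialize Proposition~\ref{prop:finalresult} to $c = b-1$ and $m_1 = \cdots = m_n = b$, then simplify. This requires $b \geq 2$ so that $c \geq 1$ as assumed in that proposition; the case $b = 1$ would need a separate small check.

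The hypotheses $m_1, m_2 \geq c+1 = b$ and $m_3, \ldots, m_n \geq b$ are trivially satisfied. Proposition~\ref{prop:finalresult} then enumerates planar bipartite tight maps with $n$ labeled faces all of degree $2b$, essentially $2b$-irreducible, with separating girth at least $2b$. I claim this is exactly the class counted by $\beta^{(b)}_n$. Bipartiteness is automatic since every face has even degree. The key observation is that in any essentially $2b$-irreducible map with $n \geq 2$ faces all of degree $2b$, each face contour must be a simple cycle of length $2b$: a closed walk of length $2b$ around a face either is a simple cycle, or can be reduced by iterating backtrack removal and splitting at repeated vertices to a strictly shorter closed walk, ultimately yielding either a simple cycle of length less than $2b$ (forbidden by the girth) or the trivial empty walk (which would force the map to be a tree, contradicting $n \geq 2$). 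Consequently, each face contour is a simple cycle of length $2b$, which is separating since it bounds a face, so the separating girth is at most $2b$, hence exactly $2b$. The same observation rules out leaves (a leaf would create a backtrack incompatible with a simple face contour), so tightness is automatic as well.

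Once the identification is established, the formula follows from direct substitution: from~\eqref{eq:qkcdef} we have $q^{(b)}_{k_i}(b) = \delta_{k_i, 0}$ for $i = 3, \ldots, n$, which collapses the sum in~\eqref{eq:lutfin} to its $k_3 = \cdots = k_n = 0$ term. Combined with the evaluation $p_k^{(b-1)}(b, b) = \binom{2b-1}{k}$ (recalled in the excerpt immediately before the lemma), this yields $(n-3)! \sum_{k \geq 0} \binom{2b-1}{k} \alpha^{(b)}_{k, n-3} = \beta^{(b)}_n$, as claimed. The only substantive step of the proof is the verification, sketched above, that in this specialized setting the separating girth bound is automatically attained and that tightness holds for free; everything else is pure bookkeeping.
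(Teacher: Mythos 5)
Your proof is correct and follows the same route as the paper: specialize Proposition~\ref{prop:finalresult} to $c=b-1$, $m_1=\cdots=m_n=b$, and use $q^{(b)}_{k_i}(b)=\delta_{k_i,0}$ together with $p_k^{(b-1)}(b,b)=\binom{2b-1}{k}$. What you add, usefully, is an explicit justification of the identification between the two classes of maps (the paper states the lemma directly after the specialization, leaving implicit the facts that tightness is automatic, that bipartiteness is automatic, and that separating girth $\geq 2b$ is equivalent to $=2b$ for such maps). One point of precision: your ``key observation'' is phrased as if essential $2b$-irreducibility alone forced each face contour of a $2b$-angulation to be a simple cycle, but essential $2b$-irreducibility only bounds the length of \emph{non-separating} cycles; a short cycle extracted from a pinched face contour could a priori be separating. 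The argument you actually need (and do implicitly use) combines essential $2b$-irreducibility with the hypothesis ``separating girth $\geq 2b$'' coming from the specialization, which together give a lower bound $2b$ on the length of \emph{every} cycle; only then does the backtrack/pinch-removal argument yield a contradiction. Since that hypothesis is available on both sides of the equivalence you are proving (it is part of what Proposition~\ref{prop:finalresult} counts, and it follows from ``separating girth $=2b$'' on the lemma's side), your conclusion stands, but the observation should be stated with the separating-girth hypothesis included. Your caveat about $b=1$ (where $c=0$ falls outside the stated range of Proposition~\ref{prop:finalresult}) is fair; the paper glosses over it too, and a direct check or a mild extension of that proposition to $c=0$ closes it.
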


Note that, by~\eqref{eq:arrowtreedef}, we have
\begin{equation}
  \sum_{n \geq 3} \frac{\beta^{(b)}_n}{(n-3)!} z^{n-3} = \sum_{k \geq 0} \binom{2b-1}k U_0'(z) U_0(z)^k = U_0'(z) (1+U_0(z))^{2b-1}
\end{equation}
with $U_0(z)$ as in Section~\ref{sec:laginv} (recall that it depends
implicitly on $b$). Integrating over $z$ and multiplying by $2b$, we
deduce that $(1+U_0(z))^{2b}$ is the generating function of
essentially $2b$-irreducible $2b$-angulations with two marked faces
$1$ and $2$, such that face $1$ is rooted and such that the separating
girth is equal to $2b$, counted with a weight $z$ per unmarked
face. In what follows, we denote the set of such maps by
$\mathfrak{M}_b$, and by $\mathfrak{N}_b$ its subset consisting of
maps which are $2b$-irreducible in the strong (not just essentially)
sense, which means that the only separating cycles of length $2b$ are
the contours of the marked faces. The following lemma corresponds
to~\cite[Equation~(9.21)]{irredmaps} specialized to the case of
$2b$-angulations:
\begin{lem}
  \label{lem:annularid}
  Let us denote by $N_{b}(z)$ the generating function of maps in
  $\mathfrak{N}_b$, counted with a weight $z$ per unmarked face.
  Then, we have
  \begin{equation}
    \label{eq:annularid}
    (1+U_0(z))^{2b} = \frac{1}{1-\left( N_{b}(z) - 1 - 2 b z \right) - b \frac{2z + z^2}{1+2z+z^2}}.
  \end{equation}
\end{lem}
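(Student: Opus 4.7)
The plan is to decompose a map $M \in \mathfrak{M}_b$ along its separating cycles of length $2b$ and translate the decomposition into a geometric-series identity for generating functions. First, one observes that the separating cycles of length $2b$ in $M$ are linearly nested: writing $s$ for their number, either $s = 1$---in which case the contours of face~$1$ and face~$2$ coincide, $M$ is the bare $2b$-cycle, and the contribution to the generating function is the constant $1$---or $s \geq 2$ and the cycles $C_1 \subset \cdots \subset C_s$ satisfy $C_1 = $ contour of face~$1$ and $C_s = $ contour of face~$2$. Cutting along the $s - 2$ intermediate cycles $C_2, \ldots, C_{s-1}$ and capping each of the resulting openings with a new face of degree~$2b$ yields an ordered tuple of $s - 1$ non-bare maps in $\mathfrak{N}_b$; conversely, any such tuple can be re-glued to recover $M$.

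The second step is to translate this bijection into the identity $(1 + U_0(z))^{2b} = 1/(1 - A(z))$, in which $A(z)$ is the generating function of annular pieces. Identifying $A(z)$ is the crux of the proof. A naive interpretation of the decomposition would set $A(z) = N_b(z) - 1$, but a direct check refutes this: for $b = 2$ one has $U_0(z) = z C(z)$ with $C$ the Catalan series, giving $[z^2](1 + U_0)^4 = 10$, whereas $[z^2]\frac{1}{2 - N_b} = 16$. The mismatch reflects two interlocking combinatorial subtleties: the rooting of face~$1$ in $\mathfrak{N}_b$ does not transfer canonically to the face created at an intermediate cut (which has $2b$ corners, none of them distinguished by $M$), and consecutive cycles $C_k, C_{k+1}$ may share vertices or edges, producing ``pinched'' annular pieces which must be enumerated separately.

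The hard part will be the precise identification of $A(z)$. The cleanest route is to invoke the general identity \cite[Equation~(9.21)]{irredmaps}, established for essentially $2b$-irreducible bipartite maps with arbitrary even inner face degrees, and specialize it by forcing all inner face degrees to equal $2b$; the annular-piece generating function then simplifies to $A(z) = N_b(z) - 1 - 2bz + b\frac{z(2+z)}{(1+z)^2}$, which is exactly the denominator appearing in~\eqref{eq:annularid}. Alternatively, one may redo the bookkeeping directly in the $2b$-angulation setting: the subtraction $-2bz$ removes an overcount of the simplest thin annuli, and the rational term $+b\frac{z(2+z)}{(1+z)^2}$ corrects by an inclusion-exclusion over the ways consecutive cycles can share vertices and edges. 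Verifying that this inclusion-exclusion produces precisely the claimed rational function is the main technical step of the proof.
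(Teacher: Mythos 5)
Your high-level plan (decompose along separating $2b$-cycles, then express the result as a geometric series) is the same as the paper's, and your sanity check for $b=2$ correctly flags that the naive $A(z)=N_b(z)-1$ is wrong. You also correctly identify the final form of $A(z)$. However, there is a genuine gap, and also an incorrect statement on which your decomposition rests.

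The claim that the separating cycles of length $2b$ in $M\in\mathfrak M_b$ ``are linearly nested'' is false: two minimal separating cycles may \emph{intersect} each other, and this is precisely the phenomenon the proof must address. The paper therefore cuts only along \emph{cuttable} cycles (those meeting no other cycle in $C$); these cuttable cycles do form a nested chain, but the pieces obtained are not all in $\mathfrak N_b$. A piece can be the pathological map $\mathbf p_i$ of Figure~\ref{fig:PathologicalMapsBis}, which has two crossing minimal separating cycles and hence lies in $\mathfrak M_b\setminus\mathfrak N_b$. Your statement that cutting ``yields an ordered tuple \dots\ in $\mathfrak N_b$'' is therefore not correct, and the subsequent geometric-series identity $(1+U_0)^{2b}=1/(1-A(z))$ requires knowing exactly which sequences of pieces occur. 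That identification is the entire content of the lemma, and you explicitly defer it (``Verifying that this inclusion-exclusion produces precisely the claimed rational function is the main technical step of the proof''), so the argument is not carried through.

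The paper's resolution is concrete. After cutting along cuttable cycles, each piece is either an ordinary element of $\mathfrak N_b$ or one of the $b$ pathological maps $\mathbf p_i$; moreover two further small maps $\mathbf p'_i,\mathbf p''_i\in\mathfrak N_b$ play a special role because no two consecutive pieces in the sequence may come from the triple $\{\mathbf p_i,\mathbf p'_i,\mathbf p''_i\}$ with the \emph{same} index~$i$. Encoding the sequence as a word over $\{O,P_1,\dots,P_b\}$ subject to this ``no two equal $P_i$'s adjacent'' restriction reduces the count to Smirnov words, whose generating function $\bigl(1-\sum_i v_i/(1+v_i)\bigr)^{-1}$ produces the rational correction $b\,\frac{2z+z^2}{1+2z+z^2}$ upon setting each $v_i=2z+z^2$ (the combined weight of $\mathbf p_i,\mathbf p'_i,\mathbf p''_i$), while the substitution $u=N_b(z)-1-2bz$ accounts for the remaining pieces. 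Your intuition about ``pinched annuli'' and the $-2bz$ correction points in the right direction, but without identifying the finite set of pathological maps, the adjacency constraint, and the Smirnov-word mechanism, the claimed formula is not established. Citing \cite[Equation~(9.21)]{irredmaps} also does not close the gap, since that is precisely the identity being re-derived here in the $2b$-angulation case; the work of checking that its specialization yields the stated rational function is equivalent to the argument you omitted.
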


\begin{proof}
  Let $\mathbf{m}$ be a map in $\mathfrak{M}_b$, i.e. a map
  contributing to $(1+U_0(z))^{2b}$, which we draw in the complex
  plane with the face $1$ containing the origin and the face $2$
  chosen as the outer face.  Let $C$ be the set of separating cycles
  of $\mathbf{m}$ with length $2b$. We say that a cycle $c \in C$ is
  \emph{cuttable} if it does not intersect any other cycle $c' \in C$,
  that is $c'$ remains within one of the two closed regions delimited
  by $c$. Upon cutting $\mathbf{m}$ along all cuttable cycles, we
  decompose it into a sequence of maps
  $(\mathbf{m}_1,\ldots,\mathbf{m}_\ell)$ nested into one another.
  These maps belong also to $\mathfrak{M}_b$, since they have
  naturally two marked faces and may be rooted in some canonical
  manner. All of them have at least one unmarked face.
  
  \begin{figure}
    \centering
    \includegraphics[width=\textwidth]{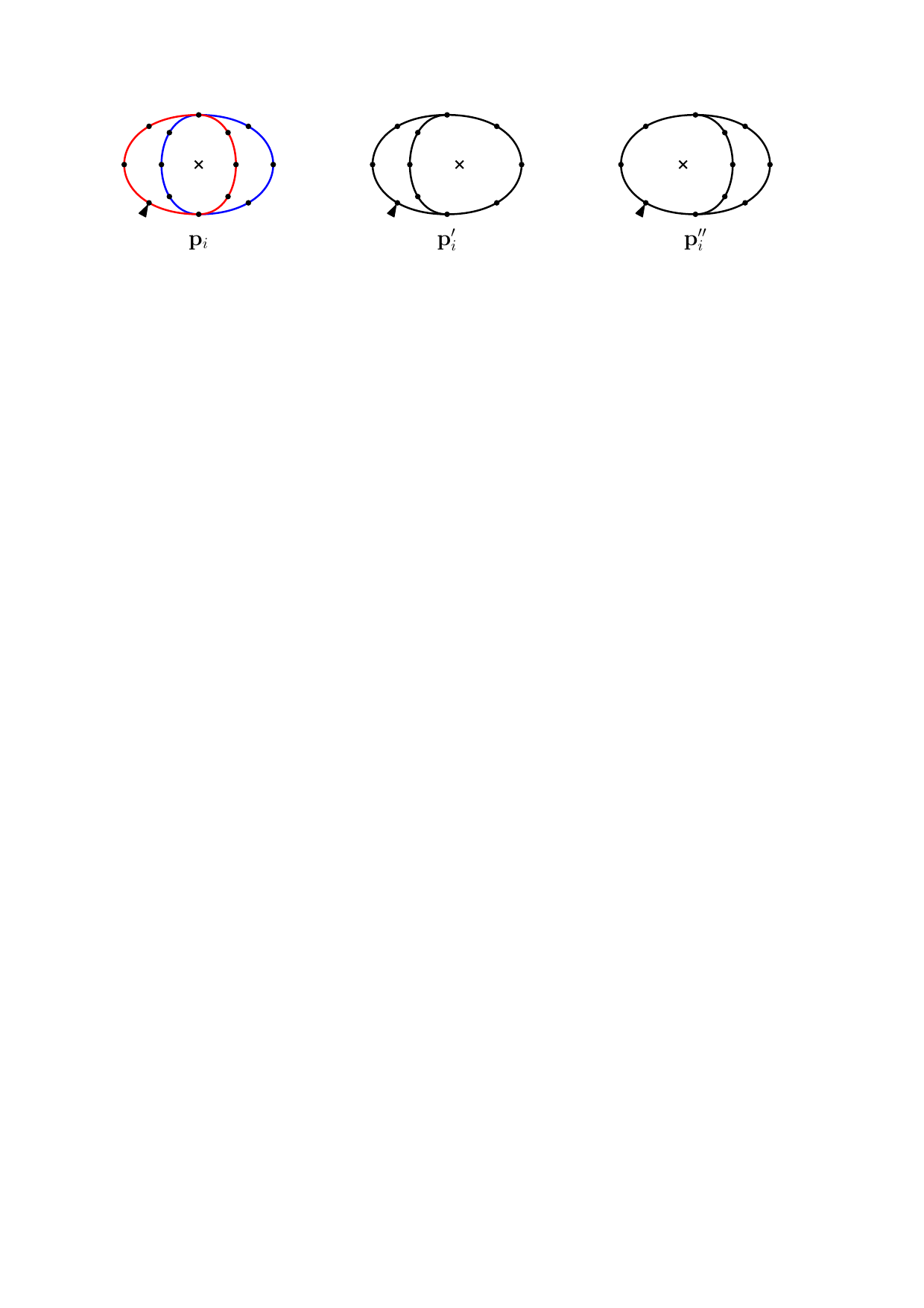}
    \caption{The pathological maps $\mathbf{p}_i$, $\mathbf{p}'_i$ and
      $\mathbf{p}''_i$, $i=1,\ldots,b$, in $\mathfrak{M}_b$ (here
      $b=4$ and $i=1$). The arrow indicates the root and the cross the
      marked inner face. In $\mathbf{p}_i$, there are two minimal
      separating cycles (shown in red and blue) which intersect each
      other, hence are not cuttable. The maps $\mathbf{p}'_i$ and
      $\mathbf{p}''_i$ contain no uncuttable cycles, but they are
      still pathological in the sense that, when cutting a map in
      $\mathfrak{M}_b$ along its cuttable cycles (as explained in the
      text) then we cannot find two consecutive elements
      $\mathbf{p}_i,\mathbf{p}'_i,\mathbf{p}''_i$ with the same index
      $i$.}
    \label{fig:PathologicalMapsBis}
  \end{figure}
  
  Now, we observe that for each $\mathbf{m}_j$, two situations may
  arise: either it contains no minimal separating cycle other than the
  contours of the marked faces, in which case it belongs to
  $\mathfrak{N}_b$, or it contains such a cycle, which necessarily
  intersects another minimal separating cycle as it would have been
  cuttable otherwise. In that case, using the essential
  $2b$-irreducibility, we see that $\mathbf{m}_j$ necessarily
  coincides with the ``pathological'' map $\mathbf{p}_i$ displayed on
  Figure~\ref{fig:PathologicalMapsBis} for some $i=1,\ldots,b$ (this
  index indicates the position of the root corner).

  We note that there is an extra constraint on the sequence
  $(\mathbf{m}_1,\ldots,\mathbf{m}_\ell)$: let $\mathbf{p}'_i$ and
  $\mathbf{p}''_i$ be the other ``pathological'' maps displayed on
  Figure~\ref{fig:PathologicalMapsBis}. While they belong to
  $\mathfrak{N}_b$, it is not possible to have within the sequence
  $(\mathbf{m}_1,\ldots,\mathbf{m}_\ell)$ two consecutive elements
  equal to $\mathbf{p}_i$, $\mathbf{p}'_i$ or $\mathbf{p}''_i$ with
  the same index $i$ (otherwise we would have either cut $\mathbf{m}$
  along an uncuttable cycle, or have in $\mathbf{m}$ a cycle of length
  $2b$ which is neither separating nor the contour of a face).

  We may code for this constraint in the following way: to the
  sequence $(\mathbf{m}_1,\ldots,\mathbf{m}_\ell)$ we associate a word
  $w$ of length $\ell$ over the alphabet $\{O,P_1,\ldots,P_b\}$, by
  replacing each $\mathbf{m}_j$ equal to $\mathbf{p}_i$,
  $\mathbf{p}'_i$ or $\mathbf{p}''_i$ for some $i$ by the letter
  $P_i$, and all other $\mathbf{m}_j$'s by the letter $O$. The word
  $w$ is such that two letters $P_i$ with the same index cannot appear
  consecutively. Splitting $w$ at each occurrence of $O$, it may be
  decomposed as $w_0Ow_1O\cdots O w_k$ for some $k \geq 0$ where
  $w_0,\ldots,w_k$ are so-called \emph{Smirnov words}
  \cite[Example~III.24]{Flajolet2009}, and have the multivariate
  generating function
  \begin{equation}
    S(v_1,\ldots,v_b) = \left( 1 - \sum_{i=1}^b \frac{v_i}{1+v_i} \right)^{-1}
  \end{equation}
  where $v_i$ is the weight per occurrence of the letter
  $P_i$. Attaching a weight $u$ per occurrence of $O$, we deduce that
  the generating function of the words $w$ of the wanted form is equal
  to
  \begin{equation}
    \sum_{k \geq 0} S(v_1,\ldots,v_b)^{k+1} u^k = \frac{S(v_1,\ldots,v_b)}{1-uS(v_1,\ldots,v_b)}
    = \frac{1}{1-u-\sum_{i=1}^b \frac{v_i}{1+v_i}}.
  \end{equation}
  The generating function of the sequences
  $(\mathbf{m}_1,\ldots,\mathbf{m}_\ell)$ obeying our constraint is
  then obtained by substituting $u=N_b(z)-1-2bz$ (corresponding to the
  generating function of maps in $\mathfrak{N}_b$ having at least one
  unmarked face and different from
  $\mathbf{p}'_1,\mathbf{p}''_1,\ldots,\mathbf{p}'_b,\mathbf{p}''_b$)
  and, for all $i=1,\ldots,b$, $v_i=2z+z^2$ (corresponding to the
  combined weights of $\mathbf{p}_i,\mathbf{p}'_i,\mathbf{p}''_i$).
  This yields the right-hand side of~\eqref{eq:annularid}, and we may
  conclude by verifying that the mapping
  $\mathbf{m} \mapsto (\mathbf{m}_1,\ldots,\mathbf{m}_\ell)$ is a
  bijection.
\end{proof}

\begin{proof}[End of the proof of Proposition~\ref{prop:Npatho}]
  By Lemma~\ref{lem:annularid}, we have
  \begin{equation}
    \begin{split}
      N_b(z) &= 2 - (1+U_0(z))^{-2b} + 2 b z - b \frac{2z+z^2}{1+2z+z^2} \\
      &= 1 - \sum_{i \geq 1} (-1)^i \binom{2b-1+i}i U_0(z)^i + b \frac{z^2(3+2z)}{(1+z)^2}.
    \end{split}
  \end{equation}
  Extracting the coefficient of $z^{n-2}$ for $n \geq 3$ and
  using~\eqref{eq:arrowtreedef}, we get
  \begin{equation}
    \begin{split}
      [z^{n-2}] N_b(z) &= \sum_{i \geq 1} (-1)^{i-1} \binom{2b+i-1}i \frac{i}{n-2} \alpha_{i-1,n-3} + b (-1)^n (n-1) \mathbf{1}_{n \geq 4} \\
      &= \frac{2b}{n-2} \sum_{k \geq 0} (-1)^k \binom{2b+k}k \alpha_{k,n-3} + b (-1)^n (n-1) \mathbf{1}_{n \geq 4}.
    \end{split}
  \end{equation}
  We obtain the wanted expression~\eqref{eq:Npatho} by multiplying by
  $(n-2)!$ to label the $n-2$ unmarked faces, and dividing by $2b$ to
  remove the rooting of face $1$.
\end{proof}

\printbibliography

\end{document}